\documentclass[12pt]{amsart}

\setlength{\textheight}{23cm}
\setlength{\textwidth}{16cm}
\setlength{\topmargin}{-0.8cm}
\setlength{\parskip}{0.3\baselineskip}
\hoffset=-1.4cm

\newtheorem{theorem}{Theorem}[section]
\newtheorem{lemma}[theorem]{Lemma}
\newtheorem{proposition}[theorem]{Proposition}
\newtheorem{corollary}[theorem]{Corollary}
\newtheorem{definition}[theorem]{Definition}
\newtheorem{remark}[theorem]{Remark}

\numberwithin{equation}{section}

\begin{document}

\baselineskip=15pt

\title[Vector bundles over a Klein bottle]{Stable real
algebraic vector bundles over a Klein bottle}

\author[U. N. Bhosle]{Usha N. Bhosle}

\address{School of Mathematics, Tata Institute of Fundamental
Research, Homi Bhabha Road, Mumbai 400005, India}

\email{usha@math.tifr.res.in}

\author[I. Biswas]{Indranil Biswas}

\address{School of Mathematics, Tata Institute of Fundamental
Research, Homi Bhabha Road, Mumbai 400005, India}

\email{indranil@math.tifr.res.in}

\subjclass[2000]{Primary 14H60; Secondary 14P99}

\keywords{Stable bundle, real algebraic curve, Klein bottle}

\date{}

\begin{abstract}
Let $X$ be a geometrically connected smooth projective curve of
genus one, defined over the field of real numbers, such that
$X$ does not have any real points. We classify the isomorphism
classes of all stable real algebraic vector bundles over $X$.
\end{abstract}

\maketitle

\section{Introduction}

A \textit{Klein surface} is a
geometrically connected smooth projective curve,
defined over the field of real numbers, satisfying the condition
that it does not have any points defined over $\mathbb R$
\cite{AG}.
For a geometrically connected smooth curve $X$ defined over
$\mathbb R$, the corresponding complex curve $X_{\mathbb C}
\, :=\, X\times_{\mathbb R}{\mathbb C}$ is equipped with
an anti--holomorphic involution $\sigma$ given by the involution
of $\mathbb C$ defined by $z\, \longmapsto\, \overline{z}$.
If $X$ does not have any points defined over $\mathbb R$, then
$\sigma$ does not have any fixed points. Therefore,
in that case the quotient space $X_{\mathbb C}/\sigma$ is
a nonorientable topological surface. If, furthermore, $X$
is projective of genus one, then $X_{\mathbb C}/\sigma$ is
evidently a Klein bottle.

By a Klein bottle we will mean a geometrically connected smooth
projective curve of genus one defined over $\mathbb R$ that does
not have any real points.

In \cite{At2}, Atiyah classified the
isomorphism classes of indecomposable vector
bundles over a smooth elliptic curve defined over $\mathbb C$.
His method can be generalized to classify indecomposable
vector bundles
on smooth elliptic curves defined over $\mathbb R$.
By a smooth elliptic curve we mean a geometrically
connected projective group of dimension one. Therefore,
a smooth elliptic curve defined over $\mathbb R$ has
an obvious point
defined over $\mathbb R$, namely the identity element for the
group law.

A Klein bottle does not have a real point,
consequently it does not admit any
real algebraic vector bundle of odd degree. On the other
hand, when we consider real algebraic vector bundles of
even degree, a Klein bottle has many more
stable vector bundles than a smooth elliptic curve defined
over $\mathbb R$. The space of stable holomorphic vector
bundles of rank $r$ and degree $d$
over a smooth elliptic curve defined over $\mathbb C$
is empty if $\text{gcd}(r\, ,d)\,\not=\, 1$,
and it is of complex dimension one
if $\text{gcd}(r\, ,d)\,=\, 1$. It follows from what we
prove here that given any $r\, >\, 0$ and $d$,
the real dimension of the space
of stable real algebraic vector bundles of rank $2r$ and
degree $2d$ over a Klein bottle is two.

Our aim here is to investigate the real algebraic vector
bundles over a Klein bottle. We classify the isomorphism
classes of real algebraic vector bundles of rank one and
rank two over a Klein bottle. We also
classify the isomorphism classes of
stable real algebraic vector bundles of all ranks.

Take a Klein bottle $X$. For any integers $r'$ and $d'$,
let ${\mathcal M}_{X_{\mathbb C}}(r',d')$ be the moduli
space of stable vector bundles over
$X_{\mathbb C}\, :=\, X\times_{\mathbb R}{\mathbb C}$ of
rank $r'$ and degree $d'$. Let $\text{Pic}^0(X_{\mathbb C})$
be the Picard group parametrizing
holomorphic line bundles over
$X_{\mathbb C}$ of degree zero. The discrete subgroup of
$\text{Pic}^0(X_{\mathbb C})$ defined by the line bundles of
order $r'$ will be denoted by $\Gamma_{r'}$. Let
$\text{Pic}^0(X)\, \subset\, \text{Pic}^0(X_{\mathbb C})$
be the subgroup of real algebraic line bundles over $X$
of degree zero. Set
$$
\Gamma^{\mathbb R}_{r'}\, :=\, \text{Pic}^0(X)\bigcap
\Gamma_{r'}\, .
$$
Let $\sigma\, :\, X_{\mathbb C}\, \longrightarrow\,
X_{\mathbb C}$ be the anti--holomorphic involution of the
complexification $X_{\mathbb C}$.
For a holomorphic vector bundle $F$ over $X_{\mathbb C}$,
let $\overline{F}$ be the smooth complex vector bundle 
whose underlying real vector bundle is the one
underlying the holomorphic vector bundle $F$, while
the complex structure on the fibers of $\overline{F}$
is the conjugate complex structure of the fibers of $F$. 
The pullback $\sigma^*\overline{F}$ has a natural
structure of a holomorphic vector bundle.

Given a real algebraic vector bundle $E$ on $X$, let
$E_{\mathbb C}\, :=\, E\bigotimes_{\mathbb R}{\mathbb C}$
be the corresponding complex algebraic vector bundle over
$X_{\mathbb C}$. We show that if $E$ is stable, then either 
$E_{\mathbb C}$ is stable, or $E_{\mathbb C}$ is isomorphic to
$F\bigoplus \sigma^*\overline{F}$, where $F$ is a stable
vector bundle over $X_{\mathbb C}$.
Since the degree and rank of a stable vector bundle on 
$X_{\mathbb C}$ must be coprime, this implies that 
if $E$ is a stable real algebraic vector bundle over $X$
of rank $r$ and degree $d$, then either $r$ and $d$ are
mutually coprime with $d$ even, or
$\text{gcd}(r\, ,d)\, =\, 2$; see Corollary \ref{cor3}.
If $\text{gcd}(r\, ,d)\, =\, 1$, then $E_{\mathbb C}$ is stable 
and the classification of $E$ in this case is similar
to that of stable holomorphic vector bundles on $X_{\mathbb C}$. 
In case $\text{gcd}(r\, ,d)\, =\, 2$, one has 
$E_{\mathbb C} \cong F\bigoplus \sigma^*\overline{F}, F$ stable.
Thus the classification problem of stable
vector bundles reduces to the problem of
determining
which $F\bigoplus \sigma^*\overline{F}$ are complexifications of 
real vector bundles on $X$, we solve the latter problem.
Our main results may be summed up as follows.
\begin{itemize}
\item Assume that $\text{gcd}(r\, ,d)\, =\, 1$
with $d$ an even integer and
$r\, \geq\, 1$. Then the isomorphism classes
of stable real algebraic vector bundles over $X$
of rank $r$ and degree $d$ are parametrized
by $\text{Pic}^0(X)/\Gamma^{\mathbb R}_{r'}$ (see
Theorem \ref{prop5}).

\item If $\text{gcd}(r\, ,d)\, =\, 2$ with $d\, =\, 2d'$
and $r\, \geq\, 1$,
where $d'$ is an odd integer, then
the isomorphism classes
of stable real algebraic vector bundles over $X$
of rank $r$ and degree $d$ are canonically parametrized
by the quotient space ${\mathcal M}_{X_{\mathbb
C}}(\frac{r}{2},d')/({\mathbb Z}/2{\mathbb Z})$
for the involution
of ${\mathcal M}_{X_{\mathbb C}}(\frac{r}{2},d')$ defined by
$W\, \longrightarrow\, \sigma^*\overline{W}$.
The space of isomorphism classes
of stable real algebraic vector bundles over $X$
of rank $r$ and degree $d$ can be identified
with the quotient space
$({\rm Pic}^0(X_{\mathbb C})/\Gamma_r)/({\mathbb Z}/2{\mathbb Z})$
for the involution of ${\rm Pic}^0(X_{\mathbb C})/\Gamma_r$ defined
by $L\, \longmapsto\, \sigma^*\overline{L}$
(see Proposition \ref{hro} and Corollary \ref{cor4}).

\item Let $\text{gcd}(r\, ,d)\, =\, 2$ with $r\, \geq\, 1$
and $d\, =\, 2d'$, where
$d'$ is an even integer. Then the isomorphism classes
of stable real algebraic vector bundles over $X$
of rank $r$ and degree $d$ are parametrized by the quotient
by ${\mathbb Z}/2{\mathbb Z}$ of the complement
$({\rm Pic}^0(X_{\mathbb C})/\Gamma_r)\setminus
({\rm Pic}^0(X)/\Gamma^{\mathbb R}_r)$ (see Corollary
\ref{cor5} and Proposition \ref{hro2}).
\end{itemize}

To classify the real algebraic vector
bundles of rank two we first show that an indecomposable real
algebraic vector bundle of rank $2$ over $X$ is semistable.
Let $E$ be an indecomposable real algebraic vector bundle
over $X$ of rank two and degree zero. Then exactly one of
the following two statements is valid:
\begin{enumerate}
\item There is a real algebraic line bundle $\xi$
over $X$ of degree zero such that $E$ is obtained as the
unique nontrivial extension of $\xi$ by itself.

\item The vector bundle $E$ is stable.
\end{enumerate}

The space of real algebraic vector bundles
over $X$ of rank two and degree two are classified using
the second statement in the above classification of stable
vector bundles.

We will give a brief description of the sections.
In Section \ref{sec2}
we recall the classification of Klein bottles.
We show that the real algebraic vector bundle on a Klein bottle
is uniquely determined by the corresponding vector bundle
on the complex elliptic curve. In Section \ref{sec3}, we
classify all the real algebraic line bundles on any given Klein
bottle. In Section \ref{sec4}, we show that a semistable 
real algebraic vector bundle over a Klein bottle decomposes
into a direct sum of semistable vector bundles. In Section
\ref{sec5}, we classify all the isomorphism classes of
real algebraic vector bundles of rank two over a Klein
bottle. In Section \ref{sec6}, we investigate the
higher rank vector bundles over a Klein bottle. We classify
all the isomorphism classes of polystable
real algebraic vector bundles of rank at least three
over a Klein bottle. While Section
\ref{sec6} is independent of Section \ref{sec5},
it borrows heavily from the methods in Section \ref{sec5}.
In fact, Section \ref{sec5} can be simplified using
Lemma \ref{st.de.}.

{}From our point of view it
is difficult to parametrize the filtration data of
semistable vector bundles which are not polystable. For this
reason Section \ref{sec6} falls short of classifying all
real algebraic vector bundles over a Klein bottle.

\section{Preliminaries}\label{sec2}

Let $X$ be a geometrically connected
smooth projective curve of genus
one defined over the field $\mathbb R$ of real numbers.
We assume that $X$ does not have any points
defined over $\mathbb R$.
Such a curve is known as a \textit{Klein bottle}
\cite{AG}.

Let $X_{\mathbb C}\, =\, X\times_{\mathbb R}
{\mathbb C}$ be the complex projective curve
obtained by base change to $\mathbb C$. So
$X_{\mathbb C}$ is an irreducible smooth curve
of genus one.

The complex manifold $X_{\mathbb C}$ is equipped
with an anti--holomorphic involution
\begin{equation}\label{sig}
\sigma\, :\, X_{\mathbb C}\, \longrightarrow\,
X_{\mathbb C}\, ,
\end{equation}
which is defined by the conjugation involution
of field $\mathbb C$ that sends any $z$
to $\overline{z}$. Since $X$ does not have any points
defined over $\mathbb R$, this involution $\sigma$ does
not have any fixed points. Conversely, a smooth elliptic
curve defined over $\mathbb C$, equipped with a fixed point
free anti--holomorphic involution, gives a
geometrically connected smooth projective curve of genus
one, defined over $\mathbb R$,
which does not have any points
defined over $\mathbb R$. Therefore, by a Klein bottle
we will also mean a smooth elliptic
curve defined over $\mathbb C$ equipped with a fixed point
free anti--holomorphic involution.

Let $V$ be a real algebraic vector bundle over
the Klein bottle $X$. Let
$$
V_{\mathbb C}\, :=\, V\otimes_{\mathbb R}{\mathbb C}
$$
be the corresponding complex algebraic vector bundle over
$X_{\mathbb C}$. Let $\overline{V_{\mathbb C}}$ be the
$C^\infty$ complex vector bundle over $X_{\mathbb C}$
whose underlying real vector bundle is $V_{\mathbb C}$, and
the complex structure of each fiber of
$\overline{V_{\mathbb C}}$ is the conjugate of the
complex structure of the fibers of
$V_{\mathbb C}$. The vector bundle
$\overline{V_{\mathbb C}}$ does not have a natural
holomorphic structure. However, the vector bundle
$\sigma^*\overline{V_{\mathbb C}}$ has a natural
holomorphic --- hence algebraic --- structure, where
$\sigma$ is the involution in eqn. \eqref{sig}. The
fact that for any holomorphic function $f$ on $\mathbb C$,
the function $z\, \longrightarrow\, \overline{f(\overline{z})}$
is also holomorphic ensures that
$\sigma^*\overline{V_{\mathbb C}}$ has a natural
holomorphic structure.

The involution $\sigma$ lifts to an algebraic isomorphism
\begin{equation}\label{tau}
\delta\, :\, V_{\mathbb C}\, \longrightarrow\,
\sigma^*\overline{V_{\mathbb C}}
\end{equation}
such that the composition
\begin{equation}\label{comp.}
V_{\mathbb C}\, \stackrel{\delta}{\longrightarrow}\,
\sigma^*\overline{V_{\mathbb C}}
\, \stackrel{\sigma^*\overline{\delta}}{\longrightarrow}\,
\sigma^*\overline{\sigma^*\overline{V_{\mathbb C}}}
\, =\, V_{\mathbb C}
\end{equation}
is the identity map of $V_{\mathbb C}$; since 
$\sigma^2 \, =\, \text{Id}_{X_{\mathbb C}}$,
and $\overline{\overline{F}}\, =\, F$ for any
complex vector bundle $F$, it follows that
$\sigma^*\overline{\sigma^*\overline{V_{\mathbb C}}}$
is canonically identified with $ V_{\mathbb C}$.

Let $W$ be another real algebraic vector bundle over $X$.
Set $W_{\mathbb C}\, =\, W\bigotimes_{\mathbb R}{\mathbb C}$.
Let
$$
\delta_W\, :\, W_{\mathbb C}\, \longrightarrow\,
\sigma^*\overline{W_{\mathbb C}}
$$
be the isomorphism as in eqn. \eqref{tau} for $W$. Then the
real vector space $H^0(X,\, \text{Hom}(V\, ,W))$ is identified
with subspace of $H^0(X_{\mathbb C},\,
\text{Hom}(V_{\mathbb C}\, ,W_{\mathbb C}))$ consisting of
all homomorphisms $f$ such that the diagram
$$
\begin{matrix}
V_{\mathbb C} & \stackrel{f}{\longrightarrow} & W_{\mathbb C}\\
\Big\downarrow\delta && \Big\downarrow \delta_W \\
\sigma^*\overline{V_{\mathbb C}} &
\stackrel{\sigma^*\overline{f}}{\longrightarrow}
& \sigma^*\overline{W_{\mathbb C}}
\end{matrix}
$$
is commutative.

The following two categories are equivalent:
\begin{enumerate}
\item the category of real algebraic vector bundles over
$X$ with ${\mathcal O}_X$--linear homomorphisms as morphisms, and

\item the category whose objects are pairs $(V\, ,\delta)$, where
$V$ is a holomorphic (= algebraic)
vector bundle over $X_{\mathbb C}$ and
$$
\delta\, :\, V\, \longrightarrow\,\sigma^*\overline{V}
$$
is an algebraic isomorphism, such
that the composition $(\sigma^*\overline{\delta}) \circ \delta$
is the identity map of $V$ (see eqn. \eqref{comp.}), and
the morphisms from $(V\, ,\delta)$ to $(F\, ,\eta)$ are all
${\mathcal O}_{X_{\mathbb C}}$--linear homomorphisms
$$
\phi\, :\, V\, \longrightarrow\, F
$$
such that the diagram
$$
\begin{matrix}
V & \stackrel{\phi}{\longrightarrow} & F\\
\Big\downarrow\delta && \Big\downarrow \eta\\
\sigma^*\overline{V} &
\stackrel{\sigma^*\overline{\phi}}{\longrightarrow}
& \sigma^*\overline{F}
\end{matrix}
$$
is commutative.
\end{enumerate}

\begin{lemma}\label{lem0}
Let $E_1$ and $E_2$ be two real algebraic vector
bundles over a Klein bottle $X$. Let
$V_i\, :=\, E_i\bigotimes_{\mathbb R}{\mathbb C}$,
$i\, =\,1,2$, be the corresponding holomorphic vector
bundles over $X_{\mathbb C}\, =\, X\times_{\mathbb R}
{\mathbb C}$. If the two holomorphic vector bundles
$V_1$ and $V_2$ are isomorphic, then $E_1$ is isomorphic
to $E_2$.
\end{lemma}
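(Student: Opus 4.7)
The plan is to work in the equivalent language of pairs $(V,\delta)$ introduced before the statement, and then produce a real algebraic isomorphism by a Zariski density argument inside a finite dimensional complex vector space of homomorphisms.

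Write $(V_i,\delta_i)$ for the pair corresponding to $E_i$ under the stated equivalence of categories, so that $\delta_i:V_i\longrightarrow\sigma^*\overline{V_i}$ is an isomorphism with $(\sigma^*\overline{\delta_i})\circ\delta_i=\text{Id}$. What must be exhibited is a holomorphic isomorphism $\phi:V_1\longrightarrow V_2$ satisfying $(\sigma^*\overline{\phi})\circ\delta_1=\delta_2\circ\phi$. First I would introduce the finite dimensional complex vector space
$$
H\,:=\,\text{Hom}_{X_{\mathbb C}}(V_1,V_2)
$$
together with the ${\mathbb C}$--antilinear map $\tau(\phi)\,:=\,\delta_2^{-1}\circ(\sigma^*\overline{\phi})\circ\delta_1$. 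The relations on the $\delta_i$ force $\tau^2=\text{Id}$, and the commutative square characterization of real algebraic morphisms recalled in Section \ref{sec2} identifies the real subspace $H^{\tau}$ of $\tau$--fixed points with the real vector space $\text{Hom}_X(E_1,E_2)$ of real algebraic homomorphisms. In particular, $H^{\tau}\otimes_{\mathbb R}{\mathbb C}=H$, so $H^{\tau}$ is a real form of $H$.

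Second, I would consider the subset $U\subset H$ of $\mathcal{O}_{X_{\mathbb C}}$--linear isomorphisms $V_1\xrightarrow{\sim}V_2$. By the hypothesis $V_1\cong V_2$, the set $U$ is nonempty. It is Zariski open in $H$, because the rank drop locus in $X_{\mathbb C}\times H$ is closed and its projection to $H$ remains closed by the properness of $X_{\mathbb C}$. Moreover, $U$ is $\tau$--stable: the operators $\delta_1$, $\delta_2$, $\sigma^*$ and complex conjugation are all invertible, so $\phi\in U$ implies $\tau(\phi)\in U$.

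The final ingredient is the general fact that a real form $H^{\tau}$ of a finite dimensional complex vector space $H$ is Zariski dense in $H$ (a nonzero complex polynomial cannot vanish identically on a real form, as one sees by separating real and imaginary parts of the coefficients with respect to a real basis of $H^{\tau}$). Therefore every nonempty Zariski open subset of $H$ meets $H^{\tau}$, and in particular $U\cap H^{\tau}\neq\emptyset$. Any $\phi$ in this intersection is a real algebraic homomorphism $E_1\to E_2$ whose underlying complex morphism is an isomorphism; a quick verification using $\tau(\phi)=\phi$ shows that $\tau$ also fixes $\phi^{-1}$, so $\phi^{-1}$ is real algebraic and $\phi$ yields the desired isomorphism $E_1\cong E_2$. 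I do not anticipate a serious obstacle in this plan; the only mildly nontrivial step is the openness of $U$, which is ensured by the projectivity of $X_{\mathbb C}$.
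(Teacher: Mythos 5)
Your proposal is correct and follows essentially the same route as the paper: both define the conjugate--linear involution $T\,\longmapsto\,\delta_2^{-1}\circ(\sigma^*\overline{T})\circ\delta_1$ on $H^0(X_{\mathbb C},V_1^*\otimes V_2)$, observe that its fixed locus is a real form and hence Zariski dense, and intersect it with the nonempty Zariski open subset of isomorphisms. The extra details you supply (openness of $U$ via properness, $\tau$--stability of $U$, reality of $\phi^{-1}$) are harmless refinements of the same argument.
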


\begin{proof}
Assume that $V_1$ is isomorphic to $V_2$. The
isomorphisms from $V_1$ to $V_2$ constitute a
Zariski open dense subset
\begin{equation}\label{cu}
{\mathcal U}\, \subset\, H^0(X_{\mathbb C},\,
V^*_1\otimes V_2)\, .
\end{equation}

For $i\, =\, 1,2$, let
$$
\delta_i\, :\, V_i\, \longrightarrow\,
\sigma^*\overline{V_i}
$$
be the isomorphism as in eqn. \eqref{tau}.
We have a conjugate linear involution
$$
f\, :\, H^0(X_{\mathbb C},\,
V^*_1\otimes V_2)\, \longrightarrow\,
H^0(X_{\mathbb C},\, V^*_1\otimes V_2)
$$
defined by $T\, \longmapsto\, \delta^{-1}_2
\circ (\sigma^*\overline{T})\circ \delta_1$, where
$T\, \in\, H^0(X_{\mathbb C},\, V^*_1\bigotimes V_2)$.

Since $f$ is a conjugate linear involution,
the subset
$$
{\mathcal S} \, :=\, \{\alpha\, \in\, H^0(X_{\mathbb C},\,
V^*_1\otimes V_2)\, \mid\,f(\alpha)\, =\, \alpha\}\, \subset\,
H^0(X_{\mathbb C},\, V^*_1\otimes V_2)
$$
is a totally real $\mathbb R$--linear subspace.
In other words, we have
$$
H^0(X_{\mathbb C},\, V^*_1\otimes V_2)\,
=\, {\mathcal S}\bigoplus \sqrt{-1}{\mathcal S}\, .
$$
This implies that ${\mathcal S}$ is Zariski
dense in $H^0(X_{\mathbb C},\, V^*_1\bigotimes V_2)$.

Now it is easy to see that any
$$
T\, \in\, {\mathcal S}\bigcap{\mathcal U}\, ,
$$
where $\mathcal U$ is the Zariski open dense subset
in eqn. \eqref{cu}, gives an isomorphism of $E_1$
with $E_2$. This completes the proof of the lemma.
\end{proof}

We will now recall the classification of the Klein bottles.

Take any real number $\tau$, with $\tau\, >\, 0$.
Let
\begin{equation}\label{la.}
\Lambda\, :=\, \{m\tau\sqrt{-1}+n\, \in\, {\mathbb C}\,\mid\,
m,n\, \in\, {\mathbb Z}\}
\end{equation}
be the lattice in $\mathbb C$ generated by $1$ and $\tau$.
Let
\begin{equation}\label{wt}
\widetilde{\sigma}\, :\, {\mathbb C}\, \longrightarrow\,
{\mathbb C}
\end{equation}
be the map defined by $z\, \longmapsto\, \overline{z} +
1/2$.

Let
\begin{equation}\label{yt}
Y_\tau\, :=\, {\mathbb C}/\Lambda
\end{equation}
be the quotient space, which is an elliptic curve. The map
$\widetilde{\sigma}$ in eqn. \eqref{wt} descends to a self--map
\begin{equation}\label{si}
\sigma\, :\, Y_\tau\, \longrightarrow\, Y_\tau
\end{equation}
of the quotient space. It is easy to see that
$\sigma$ is a fixed--point free
anti--holomorphic involution of $Y_\tau$.

Therefore, the pair $(Y_\tau\, ,\sigma)$ is a Klein bottle.

See \cite[p. 64, Theorem 1.9.8]{AG} for a proof of the
following theorem.

\begin{theorem}\label{thm1}
The isomorphism classes of Klein bottles, equivalently,
the isomorphism classes of
geometrically connected smooth projective curves of
genus one, defined over $\mathbb R$, without any real points,
are as follows: Given any Klein bottle $Y$, there is a
unique real number $\tau\, >\, 0$, such that the Klein bottle
$(Y_\tau\, ,\sigma)$ (see eqn. \eqref{yt} and eqn. \eqref{si})
is isomorphic to $Y$.
\end{theorem}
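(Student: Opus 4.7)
The plan is to pass to the universal cover of the underlying complex elliptic curve and analyze the lifts of the involution. Given a Klein bottle $(Y,\sigma)$, write $Y\,=\, {\mathbb C}/L$ for some lattice $L\,\subset\, {\mathbb C}$ and choose a lift $\widetilde{\sigma}\,:\, {\mathbb C}\,\longrightarrow\, {\mathbb C}$ of $\sigma$. Being anti--holomorphic, $\widetilde{\sigma}$ has the form $\widetilde{\sigma}(z)\,=\, a\overline{z}+b$ for some $a\,\in\, {\mathbb C}^*$ and $b\,\in\, {\mathbb C}$. The three requirements on $\sigma$ translate as: (i) $\widetilde{\sigma}$ descends to $Y$, equivalent to $a\overline{L}\,=\, L$; (ii) $\sigma^2\,=\, \text{Id}_Y$, which after computing $\widetilde{\sigma}^2(z)\,=\, |a|^2 z + a\overline{b}+b$ and using discreteness of $L$ yields $|a|^2\,=\, 1$ and $a\overline{b}+b\,\in\, L$; (iii) $\sigma$ has no fixed point, i.e., $a\overline{z}-z+b\,\notin\, L$ for every $z\,\in\, {\mathbb C}$.

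Next I would normalize step by step. Replacing $z$ by $\lambda z$ rescales $L$ to $\lambda L$ and transforms $a$ to $(\lambda/\overline{\lambda})a$; a suitable choice of $\arg(\lambda)$ achieves $a\,=\, 1$, forcing $\overline{L}\,=\, L$. Any conjugation--invariant lattice in ${\mathbb C}$ is either rectangular, $\alpha{\mathbb Z}\oplus \beta\sqrt{-1}{\mathbb Z}$, or centered rectangular, $\alpha{\mathbb Z}\oplus (\alpha/2+\beta\sqrt{-1}/2){\mathbb Z}$, for some $\alpha,\beta\,>\, 0$. In both cases $L\cap {\mathbb R}\,=\, \alpha{\mathbb Z}$, so condition (ii) gives $2\,\text{Re}(b)\,\in\, \alpha{\mathbb Z}$. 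In the centered case the real parts of elements of $L$ cover $(\alpha/2){\mathbb Z}$, so this value of $\text{Re}(b)$ permits solving $b-2\sqrt{-1}y\,\in\, L$ for some $y\,\in\, {\mathbb R}$, contradicting (iii); hence $L$ must be rectangular. A further positive real rescaling preserves $a\,=\, 1$ and reduces $L$ to $\Lambda\,=\, {\mathbb Z}\oplus \tau\sqrt{-1}{\mathbb Z}$ with $\tau\,>\, 0$. Conditions (ii) and (iii) together now yield $\text{Re}(b)\,\in\, 1/2+{\mathbb Z}$. Altering the lift $\widetilde{\sigma}$ by a lattice translation and subsequently conjugating by $z\,\mapsto\, z+\sqrt{-1}t$ (which adds $2\sqrt{-1}t$ to $b$) reduces $b$ to $1/2$, so $(Y,\sigma)$ is isomorphic to the model pair $(Y_\tau ,\sigma)$ of eqn.\ \eqref{wt} and eqn.\ \eqref{si}.

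For uniqueness of $\tau$, I would lift a hypothetical isomorphism $\psi\,:\, (Y_\tau ,\sigma)\,\longrightarrow\, (Y_{\tau'} ,\sigma')$ of Klein bottles to a holomorphic bijection $\widetilde{\psi}(z)\,=\, cz+d$. The intertwining $\widetilde{\psi}(\widetilde{\sigma}(z))\,\equiv\, \widetilde{\sigma}'(\widetilde{\psi}(z))$ modulo ${\mathbb Z}\oplus \tau'\sqrt{-1}{\mathbb Z}$, upon equating coefficients of $\overline{z}$, forces $c\,=\, \overline{c}$, so $c\,\in\, {\mathbb R}$. The lattice identity $c({\mathbb Z}\oplus \tau\sqrt{-1}{\mathbb Z})\,=\, {\mathbb Z}\oplus \tau'\sqrt{-1}{\mathbb Z}$ with $c$ real splits into $c{\mathbb Z}\,=\, {\mathbb Z}$ and $c\tau{\mathbb Z}\,=\, \tau'{\mathbb Z}$, giving $|c|\,=\, 1$ and $\tau\,=\, \tau'$. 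The main obstacle is the dichotomy in the second step: one must verify carefully that the centered rectangular case is incompatible with a fixed--point--free involution of the required form, for otherwise an extra discrete invariant beyond $\tau$ would enter the classification. Once this case is eliminated, the remaining normalizations amount to a bookkeeping of the freedom to rescale, change the lift of $\sigma$, and conjugate by translations of $Y$.
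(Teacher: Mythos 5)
Your proof is correct, but it takes a genuinely different route from the paper: the paper does not prove Theorem \ref{thm1} directly at all. It cites Alling--Greenleaf \cite[Theorem 1.9.8]{AG}, which classifies Klein bottles into three families --- $(Y_\tau,\sigma)$ with $\tau>1$, $(Y_\tau,\sigma')$ with $\tau>1$ (where $\sigma'$ is induced by $z\longmapsto -\overline{z}+\sqrt{-1}\tau/2$), and $(Y_1,\sigma)$ --- and then, in the remark following the theorem, merges these into the single one-parameter family of the statement via the explicit isomorphism $(Y_\tau,\sigma)\cong(Y_{1/\tau},\sigma')$ given by $z\longmapsto \sqrt{-1}z/\tau$. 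You instead reprove the classification from scratch: lift $\sigma$ to $z\longmapsto a\overline{z}+b$, normalize $a=1$ so that $\overline{L}=L$, invoke the rectangular/centered-rectangular dichotomy for conjugation-invariant lattices, and use fixed-point-freeness to kill the centered case (correctly: there $\mathrm{Re}(b)\in(\alpha/2)\mathbb{Z}$ is always realized as the real part of a lattice vector, producing a fixed point); the remaining normalizations and the uniqueness argument (the intertwining forces the multiplier $c$ of a lift of an isomorphism to be real, hence $c=\pm1$ and $\tau=\tau'$) are all sound. What your approach buys is a self-contained, elementary proof that also makes transparent why exactly the rectangular lattices with $\mathrm{Re}(b)\in 1/2+\mathbb{Z}$ occur and why no second discrete invariant survives; what the paper's approach buys is brevity and a clean reconciliation with the standard reference, at the cost of leaving the actual lattice-theoretic work to \cite{AG}. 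The only points you should make explicit in a final write-up are the standard facts you lean on implicitly: that $L\cap\mathbb{R}$ and $L\cap\sqrt{-1}\mathbb{R}$ are both nonzero when $\overline{L}=L$ (which underlies the dichotomy), and that every biholomorphism of complex tori lifts to an affine map of $\mathbb{C}$.
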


\begin{remark}
{\rm In \cite[p. 64, Theorem 1.9.8]{AG}, Klein bottles
are classified as follows. For each elliptic curve
$Y_\tau$ defined in eqn. \eqref{yt},
consider the fixed--point free anti--holomorphic
involution
$$
\sigma'\, :\, Y_\tau\, \longrightarrow\, Y_\tau
$$
induced by the self-map
$$
z\, \longmapsto\, -\overline{z} +\frac{\sqrt{-1}\tau}{2}
$$
of $\mathbb C$. If $\tau\, >\,1$, then
$(Y_\tau\, ,\sigma)$ and $(Y_\tau\, ,\sigma')$ are not
isomorphic. For $\tau\, =\, 1$, the two Klein bottles
$(Y_1\, ,\sigma)$ and $(Y_1\, ,\sigma')$ are isomorphic
by the isomorphism induced by the self-map of
$\mathbb C$ defined by $z\, \longmapsto\, \sqrt{-1}z$.
In \cite[Theorem 1.9.8]{AG} it is proved that
any Klein bottle is isomorphic to exactly one
Klein bottle from the union of the following three sets
of Klein bottles:
\begin{enumerate}
\item $(Y_\tau\, ,\sigma)$, with $\tau\, >\, 1$,

\item $(Y_\tau\, ,\sigma')$, with $\tau\, >\, 1$, and

\item $(Y_1\, ,\sigma)$.
\end{enumerate}
We note that $(Y_\tau\, ,\sigma)$ is isomorphic to
$(Y_{1/\tau}\, ,\sigma')$. To see this consider
the isomorphism
$$
f_\tau\, :\, Y_\tau\, \longrightarrow\, Y_{1/\tau}
$$
induced by the self-map of $\mathbb C$ defined by
$$
z\, \longmapsto\, \frac{\sqrt{-1}z}{\tau}\, .
$$
it is easy to see that $\sigma\, =\, f^{-1}_\tau\circ
\sigma'\circ f_\tau$. Therefore, the two Klein bottles
$(Y_1\, ,\sigma)$ and $(Y_1\, ,\sigma')$ are isomorphic.
Therefore, Theorem \ref{thm1} follows from Theorem 1.9.8
of \cite{AG}.}
\end{remark}

In the next section we will describe the
real algebraic line bundles over a Klein bottle.

\section{Line bundles of the Klein bottle}\label{sec3}

Let $(Y\, , \sigma)$ be a Klein bottle. For any
integer $d$, let $\text{Pic}^d(Y)$
denote the Picard variety parametrizing isomorphism classes
of holomorphic line bundles of degree $d$ over the compact
connected Riemann surface $Y$. Let $Y_{\mathbb R}$ be the
smooth projective curve defined over $\mathbb R$ corresponding
to $(Y\, , \sigma)$. Therefore, $Y\, =\, Y_{\mathbb R}
\times_{\mathbb R} {\mathbb C}$.
The Picard variety $\text{Pic}^d(Y)$ is the complexification
of a variety defined over $\mathbb R$. To explain this, we
note that the complex manifold $\text{Pic}^d(Y)$
is equipped with an anti--holomorphic involution
\begin{equation}\label{e3}
\sigma_d\, :\, \text{Pic}^d(Y)\, \, \longrightarrow\,
\text{Pic}^d(Y)
\end{equation}
defined by $L\, \longmapsto\, \sigma^*\overline{L}\, \in\,
\text{Pic}^d(Y)$.
Here $\overline{L}$ is the smooth complex line bundle over
$Y$ whose underlying real vector bundle of rank two is $L$,
and the complex structure on the fibers of $\overline{L}$
is the conjugate complex structure of the fibers of $L$
(see Section \ref{sec2}). Since $\sigma$ is an
anti--holomorphic map, the pull--back $\sigma^*\overline{L}$
has a natural holomorphic structure. The holomorphic structure
of $\sigma^*\overline{L}$ is determined by the following
condition. A smooth section
of $\sigma^*\overline{L}$ over an analytic open subset
$U\, \subset\, Y$ is holomorphic if the corresponding
smooth section of $L$ over $\sigma(U)$ is holomorphic.
Since $\sigma_d$ is an anti--holomorphic involution of
$\text{Pic}^d(Y)$, the pair $(\text{Pic}^d(Y)\, ,\sigma_d)$
define a geometrically irreducible smooth projective
variety defined over the field of real numbers. Consider
the conjugate complex structure on the
real manifold underlying the complex manifold
$\text{Pic}^d(Y)$ (if $J$ is the almost complex structure on
$\text{Pic}^d(Y)$, then the conjugate complex complex
structure is $-J$). The complex manifold defined by this
conjugate complex structure is the twist of the complex
variety $\text{Pic}^d(Y)$ by the nontrivial element in the
Galois group $\text{Gal}(({\mathbb C}/
{\mathbb R})\, =\, {\mathbb Z}/2{\mathbb Z}$.
Consequently, the anti--holomorphic
involution $\sigma_d$, which is actually an
involutive holomorphic
isomorphism between $\text{Pic}^d(Y)$ and
its Galois conjugate, defines a projective
variety defined over $\mathbb R$.
Hence the real points of the real algebraic variety
$(\text{Pic}^d(Y)\, ,\sigma_d)$ are parametrized by
the fixed points of the involution $\sigma_d$.
The complex variety given by the real variety
$(\text{Pic}^d(Y)\, ,\sigma_d)$ using the inclusion of
$\mathbb R$ in $\mathbb C$ is identified with
$\text{Pic}^d(Y)$. (See \cite[Ch. I, \S~1]{Si}
and \cite[Ch. I, \S~4]{Si} for more details.)

Let $\text{Pic}^d(Y_{\mathbb R})$ denote the
space of all real algebraic line bundles
of degree $d$ over the real curve $Y_{\mathbb R}$
corresponding to $(Y\, , \sigma)$. So
$\text{Pic}^0(Y_{\mathbb R})$ is a group, and
$\text{Pic}^d(Y_{\mathbb R})$ is an affine space
for it, which means that $\text{Pic}^0(Y_{\mathbb R})$
acts freely transitively on $\text{Pic}^d(Y_{\mathbb R})$.
We note that this does not mean that
$\text{Pic}^d(Y_{\mathbb R})$ is nonempty. If $\xi$ is
a real algebraic line bundle over the curve $Y_{\mathbb R}$
corresponding to $(Y\, , \sigma)$, then we have
$$
\xi_{\mathbb C}\, \cong\, \sigma^*\overline{\xi_{\mathbb C}}\, ,
$$
where $\xi_{\mathbb C}\,=\,\xi\bigotimes_{\mathbb R}{\mathbb C}$
is the algebraic line bundle over $Y$ given by $\xi$.
In particular, for any
real algebraic line bundle $\xi$ over $Y_{\mathbb R}$ of degree
$d$, the point in $\text{Pic}^d(Y)$ corresponding to the
line bundle $\xi\bigotimes_{\mathbb R}{\mathbb C}$ is defined
over $\mathbb R$.

However, the converse is not true in general. A point of
$\text{Pic}^d(Y)$ defined over $\mathbb R$ need not
correspond to a real algebraic line bundle over $Y_{\mathbb R}$.
In fact, an algebraic
line bundle $L\, \in\, \text{Pic}^d(Y)$ corresponds to a
real algebraic line bundle over the curve $Y_{\mathbb R}$ if
and only if there is a holomorphic isomorphism
\begin{equation}\label{e4}
\beta\, :\, L\, \longrightarrow\, \sigma^*\overline{L}
\end{equation}
such that the composition homomorphism
\begin{equation}\label{e5}
L\, \stackrel{\beta}{\longrightarrow}\, \sigma^*\overline{L}
\, \stackrel{\sigma^*\overline{\beta}}{\longrightarrow}\, 
\sigma^*\overline{\sigma^*\overline{L}}\, =\, L
\end{equation}
is the identity automorphism of $L$.

\begin{proposition}\label{prop1}
For any integer $n$, the pair $({\rm Pic}^d(Y)\, ,\sigma_d)$
is isomorphic to the pair $({\rm Pic}^{d+2n}(Y)\, ,\sigma_{d+2n})$.

For any integer $d$, the involution $\sigma_{2d+1}$ of
${\rm Pic}^{2d+1}(Y)$ does not have any fixed points.

For any integer $d$, there is a real algebraic line bundle
of degree $2d$ over the curve $Y_{\mathbb R}$.
\end{proposition}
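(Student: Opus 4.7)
The plan is to construct one real algebraic line bundle $M_{\mathbb R}$ of degree $2$ on $Y_{\mathbb R}$; all three parts will then drop out. The natural candidate is $M\, :=\, \mathcal{O}_Y(p + \sigma(p))$ for any chosen $p \in Y$: since $\sigma$ has no fixed points, $\{p\, ,\sigma(p)\}$ is a single closed point of $Y_{\mathbb R}$ of residue degree two, and its ideal sheaf defines a real algebraic line bundle $M_{\mathbb R}$ with complexification $M$. Equivalently, in the formalism of Section \ref{sec2}, one produces an explicit isomorphism $\delta_M\colon M \to \sigma^*\overline{M}$ by sending the canonical section $s$ with divisor $p + \sigma(p)$ to $\sigma^*\overline{s}$, which has the same divisor because $\sigma$ swaps $p$ and $\sigma(p)$; the compatibility \eqref{e5} is then immediate since $\sigma^*\overline{(-)}$ applied twice acts trivially on sections.

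Granting this, Part 1 follows by observing that the map $L\, \longmapsto\, L \otimes M^{\otimes n}$ is a biholomorphism $\text{Pic}^d(Y) \to \text{Pic}^{d+2n}(Y)$, and it intertwines the involutions since
$$
\sigma_{d+2n}(L \otimes M^{\otimes n})\, =\, \sigma^*\overline{L} \otimes (\sigma^*\overline{M})^{\otimes n}\, \cong\, \sigma^*\overline{L} \otimes M^{\otimes n}\, =\, \sigma_d(L) \otimes M^{\otimes n}.
$$
Part 3 is then immediate: for any integer $d$, the bundle $M_{\mathbb R}^{\otimes d}$ (with negative exponents denoting duals, and $M_{\mathbb R}^{\otimes 0}\, =\, \mathcal{O}_{Y_{\mathbb R}}$) is a real algebraic line bundle of degree $2d$.

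For Part 2, Part 1 reduces the task to ruling out fixed points of $\sigma_1$ on $\text{Pic}^1(Y)$. The key point is that, since $Y$ has genus one, Riemann--Roch forces $h^0(L) = 1$ for every degree-$1$ line bundle $L$, so $p \mapsto \mathcal{O}_Y(p)$ is a bijection $Y \cong \text{Pic}^1(Y)$. Under this identification $\sigma_1$ corresponds to $\sigma$ itself: a local cocycle calculation shows that if $\mathcal{O}_Y(p)$ has transition function $z$ (a local coordinate vanishing at $p$), then $\sigma^*\overline{\mathcal{O}_Y(p)}$ has transition function $\overline{z \circ \sigma}$, which is holomorphic and vanishes simply at $\sigma(p)$; so $\sigma^*\overline{\mathcal{O}_Y(p)} \cong \mathcal{O}_Y(\sigma(p))$. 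Since $\sigma$ is fixed-point-free by hypothesis, neither is $\sigma_1$.

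The main mild obstacle is distinguishing between a holomorphic line bundle on $Y$ whose isomorphism class is merely fixed by $\sigma_d$ (which only requires an isomorphism $\beta\colon L \to \sigma^*\overline{L}$) and a genuine real algebraic line bundle on $Y_{\mathbb R}$ (which requires the full cocycle condition \eqref{e5}). Because our construction of $M_{\mathbb R}$ proceeds from an explicit $\sigma$-invariant divisor, the section-level computation above supplies \eqref{e5} for free, avoiding the usual $\pm 1$ obstruction that could otherwise arise.
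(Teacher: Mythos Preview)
Your proof is correct and follows essentially the same route as the paper: both construct the degree-two real line bundle $\mathcal{O}_Y(p+\sigma(p))$ from a $\sigma$-invariant divisor, use tensoring by its powers to intertwine the involutions $\sigma_d$ and $\sigma_{d+2n}$, and handle Part~2 by identifying $(\mathrm{Pic}^1(Y),\sigma_1)$ with $(Y,\sigma)$ via $p\mapsto\mathcal{O}_Y(p)$. The only cosmetic difference is that the paper deduces Part~3 from Part~1 applied to the trivial bundle, whereas you take $M_{\mathbb R}^{\otimes d}$ directly; your closing remark on the $\pm1$ obstruction is a nice clarification that the paper leaves implicit.
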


\begin{proof}
Take any point $x_0\, \in\, Y$. Let $D\, =\, x_0+\sigma (x_0)$ be
the divisor. The algebraic line bundle ${\mathcal O}_Y(D)$ over
$Y$ defined by $D$ corresponds to a real algebraic line bundle
over the curve $Y_{\mathbb R}$. Indeed,
$$
\sigma^*\overline{{\mathcal O}_Y(D)}\, =\,
{\mathcal O}_Y(\sigma(D))\, =\, {\mathcal O}_Y(D)\, .
$$
Furthermore the tautological isomorphism 
$\beta\, :\, {\mathcal O}_Y(D)\, \longrightarrow\,
\sigma^*\overline{{\mathcal O}_Y(D)}$ satisfies the
condition that $(\sigma^*\overline{\beta})\circ\beta
\,=\, \text{Id}_{{\mathcal O}_Y(D)}$.

Therefore, the morphism
$$
{\rm Pic}^d(Y)\,\longrightarrow\, {\rm Pic}^{d+2n}(Y)
$$
defined by $L\, \longmapsto\, L\bigotimes {\mathcal O}_Y(nD)$
intertwines the involutions $\sigma_{d}$ and $\sigma_{d+2n}$.
Thus, the two pairs $({\rm Pic}^d(Y)\, ,\sigma_d)$ and
$({\rm Pic}^{d+2n}(Y)\, ,\sigma_{d+2n})$ are isomorphic.

To prove the second assertion, note that
$({\rm Pic}^1(Y)\, ,\sigma_1)$ is canonically identified
with $(Y\, ,\sigma)$ by sending any point $y\in\, Y$
to the line bundle ${\mathcal O}_Y(y)$. Since the involution
$\sigma$ of $Y$ does not have any fixed points, the second
assertion follows immediately.

Since the trivial holomorphic line bundle ${\mathcal O}_Y$
equipped with the smooth involution defined by
$f\, \longmapsto\, \overline{f\circ\sigma}$, where $f$
is any locally defined holomorphic function on $Y$, satisfies
the condition that the composition
homomorphism as in eqn. \eqref{e5}
is the identity automorphism of ${\mathcal O}_Y$, the third
assertion follows using the first assertion. Therefore,
the proof of the proposition is complete.
\end{proof}

To investigate the real algebraic variety defined by
the degree zero line bundles over a Klein bottle, we will
use the explicit description of a Klein bottle noted
in Theorem \ref{thm1}.

Take any positive real number $\tau$. Let
$(Y_\tau\, ,\sigma)$ be the corresponding Klein bottle
(see eqn. \eqref{yt} and eqn. \eqref{si}). For any
$z\, \in\, {\mathbb C}$, the image of $z$ in the
quotient $Y_\tau$ (see eqn. \eqref{yt}) will be denoted
by $\underline{z}$. We will identify
${\rm Pic}^0(Y_\tau)$ with $Y_\tau$ using the holomorphic map
\begin{equation}\label{phi}
\phi\, :\, Y_\tau\, \longrightarrow\, {\rm Pic}^0(Y_\tau)
\end{equation}
defined by $x\, \longmapsto\, {\mathcal O}_{Y_\tau}(
{x}-\underline{0})$.
We note that $\phi$ does not intertwine the involution
$\sigma$ of $Y_\tau$
and the involution $\sigma_0$ of ${\rm Pic}^0(Y_\tau)$ defined
in eqn. \eqref{e3}. Indeed, the involution $\sigma$ does
not have any fixed points, while $\sigma_0$ has fixed points,
as shown in Proposition \ref{prop1}. We also note that
$\phi$ is an isomorphism of algebraic groups.

The smooth projective curve defined over $\mathbb R$ corresponding
to $(Y_\tau\, ,\sigma)$ will be denoted by $Y^{\mathbb R}_\tau$.

With the above notation, we have the following theorem:

\begin{theorem}\label{thm2}
Let ${\rm Pic}^0(Y_\tau)^{\sigma_0}\, \subset\,
{\rm Pic}^0(Y_\tau)$ be the fixed point set for
the involution $\sigma_0$ of ${\rm Pic}^0(Y_\tau)$.
\begin{enumerate}

\item{}
The fixed point set ${\rm Pic}^0(Y_\tau)^{\sigma_0}$
is the disjoint union 
\begin{equation}\label{se.}
{\rm Pic}^0(Y_\tau)^{\sigma_0}\, =\, \{\phi(\underline{r}) \, \mid
\, r\, \in\, {\mathbb R}; 0\leq r <1\}\bigcup \{\phi (\underline{r
+\sqrt{-1}\tau/2})\, \mid\, r\, \in\, {\mathbb R}; 0\leq r <1\}\, .
\end{equation}

\item{} The subset $\{\phi(\underline{r})\, \mid\, r\,
\in\, {\mathbb R};
0\leq r <1\}\, \subset\, {\rm Pic}^0(Y_\tau)^{\sigma_0}$
consists of all real algebraic line bundles over the
curve $Y^{\mathbb R}_\tau$.

\item{} For any holomorphic line bundle
$$
L\, \in\, \{\phi(\underline{r+\sqrt{-1}\tau/2})\, \mid\, r\,
\in\, {\mathbb R}; 0\leq r <1\}\, \subset\,
{\rm Pic}^0(Y_\tau)^{\sigma_0}\, ,
$$
there is no real algebraic line bundle $\xi$ over the
curve $Y^{\mathbb R}_\tau$ such that $L\, =\, \xi
\bigotimes_{\mathbb R}{\mathbb C}$.
\end{enumerate}
\end{theorem}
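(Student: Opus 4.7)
The strategy is to transport $\sigma_0$ back to $Y_\tau$ via $\phi$ and then use Abel--Jacobi together with $\sigma$--invariant divisors to separate the two components of the fixed locus. For part~(1), since $L\,\mapsto\,\sigma^*\overline L$ is multiplicative, $\sigma_0$ is a group endomorphism of ${\rm Pic}^0(Y_\tau)$, so $\widetilde\sigma\,:=\,\phi^{-1}\circ\sigma_0\circ\phi$ is an anti--holomorphic group involution of $Y_\tau$. Using the description of holomorphic sections of $\sigma^*\overline L$ recalled in Section~\ref{sec2}, a meromorphic section of $L\,=\,\mathcal O_{Y_\tau}(D)$ with divisor $D$ induces by conjugate--pullback a meromorphic section of $\sigma^*\overline L$ with divisor $\sigma(D)$, hence $\sigma^*\overline{\mathcal O_{Y_\tau}(D)}\,\cong\,\mathcal O_{Y_\tau}(\sigma(D))$. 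Applied to $D\,=\,\underline z-\underline 0$, with $\sigma(\underline z)\,=\,\underline{\bar z+1/2}$ and $\sigma(\underline 0)\,=\,\underline{1/2}$, subtraction in the group law of $Y_\tau$ yields
$$
\phi^{-1}\bigl(\sigma_0(\phi(\underline z))\bigr)\,=\,\underline{\bar z+1/2-1/2}\,=\,\underline{\bar z}\, ,
$$
so $\widetilde\sigma(\underline z)\,=\,\underline{\bar z}$. The condition $z-\bar z\in\Lambda$ forces $\text{Im}(z)\in\tfrac{\tau}{2}\mathbb Z$, which modulo $\Lambda$ yields precisely the two circles of \eqref{se.}.

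For part~(2), for each $r\in[0,1)$ I consider the $\sigma$--invariant divisor
$$
D_r\,:=\,(\underline{r/2}+\sigma(\underline{r/2}))-(\underline 0+\sigma(\underline 0))
$$
on $Y_\tau$; since $\sigma$ has no fixed points this descends to a real algebraic divisor on $Y^{\mathbb R}_\tau$. Using $\sigma(\underline{r/2})\,=\,\underline{r/2+1/2}$ and the Abel--Jacobi identification of the class of a degree--zero divisor with its group--law sum,
$$
[\mathcal O_{Y_\tau}(D_r)]\,=\,\phi\bigl(\underline{r/2+(r/2+1/2)-0-1/2}\bigr)\,=\,\phi(\underline r)\, .
$$
Hence $\phi(\underline r)$ is the complexification of the real algebraic line bundle $\mathcal O_{Y^{\mathbb R}_\tau}(D_r)$, proving~(2).

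For part~(3), I argue by contradiction: suppose $L\,=\,\phi(\underline{r+\sqrt{-1}\tau/2})$ is the complexification of a real algebraic line bundle $\xi$ on $Y^{\mathbb R}_\tau$. The generic fibre of $\xi$ is a one--dimensional vector space over the function field $\mathbb R(Y^{\mathbb R}_\tau)$, so $\xi$ has a nonzero rational section whose divisor is a real algebraic divisor $D$ with $\xi\,\cong\,\mathcal O_{Y^{\mathbb R}_\tau}(D)$. Lifting to $Y_\tau$, $D_{\mathbb C}$ is $\sigma$--invariant, and since $\sigma$ has no fixed points it takes the form $D_{\mathbb C}\,=\,\sum_j n_j(x_j+\sigma(x_j))$ with $\sum_j n_j\,=\,0$ (because $\deg L\,=\,0$). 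Writing $x_j\,=\,\underline{u_j}$, the same Abel--Jacobi bookkeeping as in (2) gives
$$
\phi^{-1}\bigl([\mathcal O_{Y_\tau}(D_{\mathbb C})]\bigr)\,=\,\underline{\textstyle\sum_j n_j(u_j+\bar u_j+1/2)}\,=\,\underline{\textstyle\sum_j 2n_j\,\text{Re}(u_j)}\, ,
$$
the $1/2$ terms cancelling because $\sum n_j=0$. Since $\sum 2n_j\,\text{Re}(u_j)\in\mathbb R$, this places $L$ in the first component of ${\rm Pic}^0(Y_\tau)^{\sigma_0}$, contradicting $L\in\{\phi(\underline{r+\sqrt{-1}\tau/2}):r\in[0,1)\}$.

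The main obstacle is the input used in~(3) that every real algebraic line bundle is representable as $\mathcal O_{Y^{\mathbb R}_\tau}(D)$ for a real algebraic divisor $D$; the rational--section argument above handles this cleanly once one observes that closed points of $Y^{\mathbb R}_\tau$ all have residue field $\mathbb C$ and therefore correspond to $\sigma$--orbits on $Y_\tau$. An alternative, more cohomological route would be to compute the obstruction $c(L)\,=\,(\sigma^*\overline\beta)\circ\beta\,\in\,\mathbb R^*/\mathbb R_{>0}\,\cong\,\{\pm 1\}$ attached to any isomorphism $\beta\colon L\to\sigma^*\overline L$, verify that $c$ defines a group homomorphism ${\rm Pic}^0(Y_\tau)^{\sigma_0}\to\mathbb Z/2$ whose kernel is the image of the real Picard, and then check $c\,=\,-1$ on a single second--component line bundle via explicit automorphy factors; the divisorial approach above keeps all computations inside $\mathbb C/\Lambda$ and avoids this.
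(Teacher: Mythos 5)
Your proof is correct, and while part (1) follows essentially the paper's own computation (establishing $\sigma^*\overline{\phi(\underline z)}\,=\,\phi(\underline{\overline z})$ and reducing to $z-\overline z\,\in\,\Lambda$), your treatment of parts (2) and (3) takes a genuinely different route. The paper realizes each $\phi(\underline{z_0})$ as the trivial smooth bundle with a twisted Dolbeault operator, equips it with the flat connection $D_{z_0}$, builds the isomorphism $\eta\colon \phi(\underline{z_0})\to\sigma^*\overline{\phi(\underline{z_0})}$ by parallel transport along the translation path, and identifies $(\sigma^*\overline{\eta})\circ\eta$ with the holonomy around the loop $t\mapsto\underline t$, which is $+1$ on the first circle and $-1$ on the second; the $-1$ then obstructs any rescaling to a real structure. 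You instead stay entirely within divisor theory: the fixed-point-free orbit divisors $x+\sigma(x)$ are exactly the closed points of $Y^{\mathbb R}_\tau$ (the same mechanism the paper itself uses in Proposition \ref{prop1}), so $D_r$ exhibits $\phi(\underline r)$ as a complexification, and conversely a rational section of any real line bundle forces its class to be $\phi(\underline s)$ with $s\,=\,\sum_j 2n_j\,{\rm Re}(u_j)$ real, excluding the second circle by disjointness of the two components. Both arguments are sound; yours is more elementary and purely algebraic, and it packages (2) and (3) symmetrically through one Abel--Jacobi bookkeeping identity. What the paper's holonomy computation buys in exchange is the explicit normalized isomorphisms with their signs --- in particular the identity $(\sigma^*\overline{\eta})\circ\eta\,=\,-{\rm Id}$ of eqn.~\eqref{id.r} --- which are reused verbatim later in the proof of Theorem \ref{prop5} when tensoring a stable bundle by line bundles from each circle; your closing remark about the obstruction class $c(L)\in\{\pm1\}$ is precisely the structure the paper's method makes explicit.
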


\begin{proof}
{}From the definition of $\phi$ we have
\begin{equation}\label{e7}
\phi(\underline{z})\, =\,
{\mathcal O}_{Y_\tau}(\underline{z}-\underline{0})\, .
\end{equation}
Using this, from the definition of $\sigma$ in eqn. \eqref{si}
we have
\begin{equation}\label{e8}
\sigma^*\overline{\phi(\underline{z})}\, =\,
{\mathcal O}_{Y_\tau}(\underline{\widetilde{\sigma}(z)}
-\underline{\widetilde{\sigma}(0)})
\, =\, {\mathcal O}_{Y_\tau}(\underline{\overline{z}+1/2}
-\underline{1/2})\,=\, \phi(\underline{\overline{z}+1/2})\otimes
\phi(\underline{1/2})^*
\end{equation}
for all $z\, \in\, {\mathbb C}$,
where $\widetilde{\sigma}$ is defined in eqn. \eqref{wt}.
Since the holomorphic map
$$
{\mathbb C}\, \longrightarrow\, \text{Pic}^0(Y_\tau)
$$
defined by $z \, \longmapsto\, \phi(\underline{z})$
is a group homomorphism, from eqn. \eqref{e8} it follows
that
\begin{equation}\label{e82}
\sigma^*\overline{\phi(\underline{z})}\,=\,
\phi(\underline{\overline{z}})\, .
\end{equation}

Now fix any $z\, \in\, {\mathbb C}$ such that the following
three conditions hold:
\begin{itemize}
\item $0\,\leq \,\text{Re}(z)\, <\, 1$,

\item $0\, \leq\, \text{Im}(z)\, <\,\tau$, and

\item for the algebraic group isomorphism $\phi$
defined in eqn. \eqref{phi},
\begin{equation}\label{e6}
\phi(\underline{z})\, \cong\, \sigma^*
\overline{\phi(\underline{z})}\, ,
\end{equation}
\end{itemize}
where $\sigma$ is defined in eqn. \eqref{si}, and
$\text{Re}(z)$ (respectively, $\text{Im}(z)$)
is the real (respectively, imaginary) part of $z$.

We recall that a line bundle
$L\, \in\, {\rm Pic}^0(Y_\tau)$ is
a fixed point of the involution $\sigma_0$ if and only
if $L\, \cong\, \sigma^*\overline{L}$.
Now, from eqn. \eqref{e82} and eqn. \eqref{e6} we have
\begin{equation}\label{e9}
z-\overline{z}\, \in\, \Lambda\, ,
\end{equation}
where $\Lambda$ is the lattice in \eqref{la.}.

As $0\, \leq\, \text{Im}(z)\, <\,\tau$, from eqn.
\eqref{e9} we conclude that the equality in
eqn. \eqref{se.} is valid. This proves the first
statement in the theorem.

To prove the second statement,
let $I$ denote the closed interval $[0\, ,1/2]\, \subset\,
{\mathbb R}$. Consider the map
$$
\widetilde{\gamma}\, :\,
{\mathbb C}\times I\, \longrightarrow\, {\mathbb C}
$$
defined by $\widetilde{\gamma} (z\, ,t) \, =\, z+t$. This
map $\widetilde{\gamma}$ induces a map
\begin{equation}\label{g1}
\gamma\, :\, Y_\tau\times I\, \longrightarrow\, Y_\tau\, .
\end{equation}
For any $t\, \in\, I$, let
\begin{equation}\label{g2}
\gamma_t \, :\, Y_\tau\, \longrightarrow\, Y_\tau
\end{equation}
be the map defined by $z\, \longmapsto\, \gamma ((z,t))$.

A flat connection on a holomorphic line bundle is said
to be \textit{compatible} with the holomorphic structure
if every locally defined flat section is holomorphic.
If $L$ is a holomorphic line bundle over $Y_\tau$ equipped
with a flat connection compatible
with the holomorphic structure, then parallel translations
along $\gamma$ (defined in eqn. \eqref{g1})
gives a holomorphic isomorphism of the
holomorphic line bundle $L$ with $\gamma_{1/2}^*L$.

Take any $z_0\, \in\, {\mathbb C}$. We will give an explicit
description of the holomorphic line bundle
$\phi(\underline{z_0})$ over $Y_\tau$.

Consider the holomorphically trivial line bundle
$$
\xi_0\, =\, Y_\tau\times {\mathbb C}
$$
over $Y_\tau$ with fiber $\mathbb C$. In other words,
the sheaf of holomorphic sections of $\xi_0$ is the
sheaf of holomorphic functions on $Y_\tau$. Therefore,
the Dolbeault operator on $\xi_0$ defining its holomorphic
structure is simply the differential that sends any locally
defined complex valued smooth function $f$ to
$\overline{\partial}f$.
This line bundle $\xi_0$ is equipped with the
Hermitian structure defined by $\vert(y\, ,c)\vert^2
\, =\, c\overline{c}$, where $y\, \in\, Y_\tau$
and $c\, \in\, {\mathbb C}$.

Let
\begin{equation}\label{om.}
\omega\, :=\, \overline{\partial}z
\end{equation}
be the $(0\, ,1)$--form on $Y_\tau\, =\,
{\mathbb C}/\Lambda$ (here $z$ is the
natural coordinate on the covering surface
${\mathbb C}$ of $Y_\tau$). Note that the
$(0\, ,1)$--form on ${\mathbb C}$ descends to the quotient
$Y_\tau$. The holomorphic line
bundle $\phi(\underline{z_0})$ is the smooth line bundle $\xi_0$
equipped with the Dolbeault operator
\begin{equation}\label{dz0}
\overline{\partial}_{z_0}\, :=\, \overline{\partial}-
\frac{{\pi}z_0\omega}{\tau}\, ,
\end{equation}
where $\omega$ is the form in eqn. \eqref{om.}. This expression
for $\overline{\partial}_{z_0}$ can be deduced from the
combination of the fact that the image of the line bundle
$\phi(\underline{z_0})$ in
$$
\text{Pic}^0(Y_\tau) \, =\, \frac{H^0(Y_\tau,\,
\Omega_{Y_\tau})^*}{H_1(Y_\tau,\, {\mathbb Z})}
$$
is given by integration of holomorphic one--forms on
$Y_\tau$ along any smooth path connecting $\underline{z_0}$
to $\underline{0}$ and the fact that
$$
\int_{Y_\tau} {\partial} z\wedge \overline{\partial} \overline{z}
\, =\, -2\tau\sqrt{-1}\, .
$$

Consider the connection
\begin{equation}\label{co}
D_{z_0}\, :=\,
\partial+\frac{{\pi}\overline{z_0}\overline{\omega}}{\tau}
+\overline{\partial}_{z_0}
\end{equation}
on the line bundle $\phi(\underline{z_0})$, where
$\overline{\partial}_{z_0}$ is the Dolbeault operator
in eqn. \eqref{dz0}, and $\partial$ is the $(1\, ,0)$--part
of the de Rham differential. It is straight--forward to check
that $D_{z_0}$ (defined in eqn. \eqref{co}) is a flat
connection on $\phi(\underline{z_0})$ compatible with
the holomorphic structure.

In fact $D_{z_0}$
is the unique unitary flat connection on the holomorphic
line bundle $\phi(\underline{z_0})$, and a flat
unitary metric on $\phi(\underline{z_0})$ is given by the
constant metric on the smooth trivial line bundle
$Y_\tau\times_{\mathbb R} {\mathbb C}$ (recall that
$\phi(\underline{z_0})$ was constructed by putting
the Dolbeault operator $\overline{\partial}_{z_0}$ on the
trivial line bundle). However, we will not need this for
the proof.

Continuing with the proof of the theorem, for any
$z_0\, \in \, {\mathbb C}$ we have
$$
\gamma_{1/2}^*\phi(\underline{z_0})\, =\,
\phi(\underline{z_0})\, ,
$$
where $\gamma_{1/2}$ is defined in eqn. \eqref{g2};
note that
$\phi(\underline{z_0})\, =\, {\mathcal O}_{Y_{\tau}}
(\underline{z_0} -\underline{0})$, and 
$\gamma_{1/2}^*\phi(\underline{z_0})\, =\,
{\mathcal O}_{Y_{\tau}}
(\underline{z_0-1/2} -\underline{-1/2})$. 

Now fix any $z_0\, \in \, {\mathbb R}$.
As $\overline{z_0}\, =\, z$, using
eqn. \eqref{e82} it follows that the holomorphic line bundle
$\sigma^*\overline{\phi(\underline{z_0})}$ is
isomorphic to $\phi(\underline{z_0})$. Therefore,
the holomorphic line bundle $\gamma_{1/2}^*\phi(\underline{z_0})$
is identified with the holomorphic line bundle
$\sigma^*\overline{\phi(\underline{z_0})}$.
The parallel translations along $\gamma$
(defined in eqn. \eqref{g1}) for the connection
$D_{z_0}$ (defined in eqn. \eqref{co}) gives a holomorphic
isomorphism of $\phi(\underline{z_0})$
with $\sigma^*\overline{\phi(\underline{z_0})}$.
Let
$$
\eta\, :\, \phi(\underline{z_0})\, \longrightarrow\,
\sigma^*\overline{\phi(\underline{z_0})}
$$
be the isomorphism obtained this way.

The composition
$$
(\sigma^*\overline{\eta})\circ \eta\,:\,
\phi(\underline{z_0})\, \longrightarrow\,
\sigma^*\overline{\sigma^*\overline{\phi(\underline{z_0})}}
\, =\, \phi(\underline{z_0})
$$ 
is the holonomy of the connection 
$D_{z_0}$ (defined in eqn. \eqref{co}) along the
closed path
\begin{equation}\label{path}
S^1\, \longrightarrow\, Y_\tau
\end{equation}
defined by $\exp(2\pi\sqrt{-1}t) \, \longmapsto\, \underline{t}$,
where $0\, \leq\, t\, < \,1$. It is straight--forward
to check that $D_{z_0}$ has trivial holonomy along this
loop.

Consequently, the composition
$(\sigma^*\overline{\eta})\circ \eta$ coincides with the
identity map of $\phi(\underline{z_0})$. Therefore,
$\phi(\underline{z_0})$ corresponds to a real algebraic
line bundle over the real curve $Y^{\mathbb R}_\tau$.
This proves the second statement in the theorem.

To prove the third statement in the theorem,
take any $z_0\, =r +\sqrt{-1}\tau/2$, where $r\, \in\, {\mathbb R}$.
Again we have
$$
\gamma_{1/2}^*\phi(\underline{z_0})\, \cong\,
\phi(\underline{z_0})\, ,
$$
and
the holomorphic line bundle $\gamma_{1/2}^*\phi(\underline{z_0})$
is identified with
$\sigma^*\overline{\phi(\underline{z_0})}$.
As before, let
$$
\eta\, :\, \phi(\underline{z_0})\, \longrightarrow\,
\sigma^*\overline{\phi(\underline{z_0})}
$$
be the isomorphism given by parallel translation,
for the connection $D_{z_0}$, along $\gamma$.
It is straight--forward to check that $D_{z_0}$ has
holonomy $-1$ along the closed loop
defined in eqn.\eqref{path}. Therefore, the composition
$$
(\sigma^*\overline{\eta})\circ \eta\,:\,
\phi(\underline{z_0})\, \longrightarrow\,
\sigma^*\overline{\sigma^*\overline{\phi(\underline{z_0})}}
\, =\, \phi(\underline{z_0})
$$
satisfies the identity
\begin{equation}\label{id.r}
(\sigma^*\overline{\eta})\circ \eta\,=\, -
\text{Id}_{\phi(\underline{z_0})}\, .
\end{equation}

If we replace the isomorphism $\eta$ by
$\eta'\, :=\, c\eta$, where $c\, \in\, {\mathbb C}\setminus\{0\}$,
then using eqn. \eqref{id.r},
$$
(\sigma^*\overline{\eta'})\circ\eta'\, =\, c\overline{c} \cdot
\sigma^*\overline{\eta}\circ\eta\, =\, -c\overline{c}\, .
$$
Therefore, there is no isomorphism
$$
\eta'\, :\, \phi(\underline{z_0})\, \longrightarrow\,
\sigma^*\overline{\phi(\underline{z_0})}
$$
such that $(\sigma^*\overline{\eta'})\circ \eta'\, =\,1$.
This completes the proof of the theorem.
\end{proof}

Combining Proposition \ref{prop1} and Theorem \ref{thm2},
we obtain a complete classification of real algebraic line
bundles on a Klein bottle. In particular, we have the
following corollary.

\begin{corollary}\label{cor-2}
Let $X$ be a Klein bottle.
\begin{itemize}
\item The degree of any real algebraic line bundle over
$X$ is an even integer.

\item The space of real algebraic line bundles over $X$ of
degree zero is parametrized by ${\mathbb R}/{\mathbb Z}$.
\end{itemize}
\end{corollary}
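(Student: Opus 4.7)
The plan is to deduce both statements directly by combining Proposition \ref{prop1} with Theorem \ref{thm2}, using the fact that any real algebraic line bundle $\xi$ of degree $d$ gives rise to a fixed point of the involution $\sigma_d$ on $\text{Pic}^d(Y)$.

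For the first bullet, I would argue as follows. If $\xi$ is a real algebraic line bundle of degree $d$ over $X$, then $\xi_{\mathbb C}\,\cong\,\sigma^*\overline{\xi_{\mathbb C}}$, so the isomorphism class of $\xi_{\mathbb C}$ in $\text{Pic}^d(X_{\mathbb C})$ is fixed by $\sigma_d$. Proposition \ref{prop1} asserts that $\sigma_{2d+1}$ has no fixed points on $\text{Pic}^{2d+1}(X_{\mathbb C})$; hence $d$ cannot be odd. This immediately yields that $\deg \xi$ is even.

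For the second bullet, by Proposition \ref{prop1} tensoring with $\mathcal{O}_Y(nD)$ (where $D\,=\,x_0+\sigma(x_0)$) gives an isomorphism of real algebraic varieties between the space of real algebraic line bundles of any given even degree and those of degree zero, so it suffices to treat degree zero. By Theorem \ref{thm1}, we may assume $X$ corresponds to $(Y_\tau,\sigma)$ for some $\tau\,>\,0$. Theorem \ref{thm2}(2) identifies the set of real algebraic line bundles of degree zero with
\[
\{\phi(\underline{r})\,\mid\, r\,\in\,{\mathbb R},\ 0\,\leq\, r\,<\,1\}\,\subset\,\text{Pic}^0(Y_\tau)\, ,
\]
and the map $r\,\longmapsto\,\phi(\underline{r})$ factors through an isomorphism ${\mathbb R}/{\mathbb Z}\,\longrightarrow\,\{\phi(\underline{r})\}$, since $\phi$ is an algebraic group isomorphism with kernel $\Lambda\,\cap\,{\mathbb R}\,=\,{\mathbb Z}$ when restricted to the real axis. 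This yields the desired parametrization.

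There is no real obstacle; both statements are essentially bookkeeping once one has Proposition \ref{prop1} and Theorem \ref{thm2}. The only mild subtlety is to observe that Theorem \ref{thm2}(3) ensures the second component $\{\phi(\underline{r+\sqrt{-1}\tau/2})\}$ of the fixed point set $\text{Pic}^0(Y_\tau)^{\sigma_0}$ does \emph{not} consist of complexifications of real line bundles, so that the parametrizing set is exactly ${\mathbb R}/{\mathbb Z}$ and nothing larger. This distinction between fixed points of $\sigma_d$ and genuine real forms is the conceptual point, but it is already handled within Theorem \ref{thm2}, so the corollary is a clean consequence.
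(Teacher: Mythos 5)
Your proposal is correct and follows exactly the route the paper intends: the paper offers no separate proof of Corollary \ref{cor-2}, stating it as an immediate consequence of Proposition \ref{prop1} (whose second assertion rules out odd degree, since a real line bundle of degree $d$ yields a fixed point of $\sigma_d$) and Theorem \ref{thm2} (whose parts (2) and (3) identify the degree-zero real line bundles with $\{\phi(\underline{r})\,\mid\,0\leq r<1\}\cong{\mathbb R}/{\mathbb Z}$). Your bookkeeping, including the remark that only one component of the fixed-point set consists of genuine complexifications, matches the paper's argument.
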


\section{Semistable vector bundles and
indecomposability}\label{sec4}

Let $X$ be a Klein bottle, that is, a geometrically connected
smooth projective curve of genus one, defined over $\mathbb R$,
which does not have any points defined over $\mathbb R$. A real
algebraic vector bundle $E$ over $X$ is called \textit{stable}
(respectively, \textit{semistable}) if for all nonzero proper
subbundles $F\, \subset\, E$, the inequality
$$
\frac{\text{degree}(F)}{\text{rank}(F)}\, <\,
\frac{\text{degree}(E)}{\text{rank}(E)}
$$
(respectively, $\frac{\text{degree}(F)}{\text{rank}(F)}\,
\leq\, \frac{\text{degree}(E)}{\text{rank}(E)}$) holds;
see \cite{Ne} for more details. A semistable vector bundle
over $X$ is called \textit{polystable} if it is a direct sum
of stable vector bundles.

\begin{lemma}\label{lem1}
Let $X_{\mathbb C}\, =\,
X\times_{\mathbb R} {\mathbb C}$ be the
corresponding complex elliptic curve.
Let $E$ be a real algebraic vector bundle over $X$. Let
$E_{\mathbb C}\, =\, E\bigotimes_{\mathbb R} {\mathbb C}$
be the corresponding vector bundle over $X_{\mathbb C}$.
\begin{enumerate}
\item The vector bundle $E$ is semistable if and only
if the vector bundle $E_{\mathbb C}$ over $X_{\mathbb C}$
is semistable.

\item The vector bundle $E$ over $X$
splits into a direct sum of semistable vector bundles.

\item The vector bundle $E$ is polystable if and only
if $E_{\mathbb C}$ is polystable.
\end{enumerate}
\end{lemma}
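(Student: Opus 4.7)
The plan is to prove the three parts in order, exploiting canonicity of certain constructions to descend Galois--theoretically from $X_{\mathbb C}$ to $X$. The key auxiliary tool, proved exactly as Lemma \ref{lem0} but applied to $H^1$ rather than $H^0$, is the statement that for any pair of real algebraic vector bundles $W_1,W_2$ on $X$, the real vector space $\mathrm{Ext}^1_X(W_1,W_2)$ embeds as the totally real $\mathbb R$--subspace of $\mathrm{Ext}^1_{X_{\mathbb C}}((W_1)_{\mathbb C},(W_2)_{\mathbb C})$ fixed by a conjugate--linear involution built from the descent data $\delta_{W_1},\delta_{W_2}$; in particular, a real extension splits as soon as its complexification splits.

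For part (1), the implication $E_{\mathbb C}$ semistable $\Longrightarrow$ $E$ semistable is immediate, since any real subbundle $F\subset E$ complexifies to $F_{\mathbb C}\subset E_{\mathbb C}$ of the same slope. For the converse I use the uniqueness of the maximal destabilizing subbundle: if $V\subset E_{\mathbb C}$ is this unique subbundle, then the operation $W\longmapsto \delta^{-1}(\sigma^*\overline W)$ preserves rank, degree and semistability, so $\delta^{-1}(\sigma^*\overline V)$ is also maximally destabilizing and hence equals $V$. Consequently $V$ inherits a descent datum and corresponds to a real destabilizing subbundle of $E$.

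For part (2), apply the same canonicity to the Harder--Narasimhan filtration $0=V_0\subset V_1\subset\cdots\subset V_n=E_{\mathbb C}$. Uniqueness forces every $V_i$ to be preserved by $\delta^{-1}\circ\sigma^*\overline{(-)}$, so the filtration descends to a real filtration $0=E_0\subset E_1\subset\cdots\subset E_n=E$ whose successive quotients are semistable by part (1). Since $X_{\mathbb C}$ is an elliptic curve, the canonical bundle is trivial, so Serre duality gives $\mathrm{Ext}^1_{X_{\mathbb C}}(V_i/V_{i-1},V_{i-1})\cong\mathrm{Hom}_{X_{\mathbb C}}(V_{i-1},V_i/V_{i-1})^{*}$, and the latter vanishes because the HN slopes strictly decrease. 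By the totally real embedding the real $\mathrm{Ext}^1$ also vanishes, so the real filtration splits as a direct sum of semistable bundles, as required.

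For part (3), the easy direction writes $E=\bigoplus E_j$ with each $E_j$ stable over $\mathbb R$; as recorded in the introduction, each $(E_j)_{\mathbb C}$ is either stable or of the form $W\oplus\sigma^*\overline W$ with $W$ stable, in either case polystable, so $E_{\mathbb C}$ is polystable. Conversely, assume $E_{\mathbb C}$ polystable; then $E$ is semistable by part (1), and I induct on $\mathrm{rank}(E)$. If $E$ admits no proper real subbundle of slope $\mu(E)$, then $E$ is stable and so polystable. Otherwise pick such a subbundle $F\subset E$ (automatically semistable); its complexification $F_{\mathbb C}$ is a subbundle of the polystable $E_{\mathbb C}$ of maximal slope, hence a direct summand, so the complex extension class of $0\to F_{\mathbb C}\to E_{\mathbb C}\to E_{\mathbb C}/F_{\mathbb C}\to 0$ vanishes. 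By the totally real embedding the real extension class also vanishes, giving $E=F\oplus (E/F)$; both summands are direct summands of a polystable bundle over $X_{\mathbb C}$ and so are polystable over $\mathbb R$ by induction. The step I expect to require the most care is the verification of the totally real subspace property for $\mathrm{Ext}^1$, since it must be set up via sheaf cohomology (e.g.\ a Dolbeault or \v{C}ech model, or simply flatness of base change) rather than via global sections as in Lemma \ref{lem0}.
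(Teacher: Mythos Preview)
Your arguments for (1) and (2) match the paper's: both descend the Harder--Narasimhan filtration by uniqueness, then use triviality of $K_{X_{\mathbb C}}$ to kill the relevant $\mathrm{Ext}^1$ groups. The only variation is that you split the real filtration via a totally--real $\mathrm{Ext}^1$ argument, whereas the paper simply notes that once $E_{\mathbb C}\cong\bigoplus_i V_i/V_{i-1}$ with each quotient descending to a real bundle, Lemma~\ref{lem0} gives $E\cong\bigoplus_i (V_i/V_{i-1})_{\mathbb R}$ directly, bypassing any discussion of real $\mathrm{Ext}^1$. Either route is fine.

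There is, however, a genuine circularity in your part (3). For the implication ``$E$ polystable $\Rightarrow E_{\mathbb C}$ polystable'' you appeal to the structure result announced in the introduction, that the complexification of a stable real bundle is either stable or of the form $W\oplus\sigma^*\overline W$ with $W$ stable. That result is Lemma~\ref{st.de.}, and the very first line of its proof invokes Lemma~\ref{lem1}(3) to conclude that $V_{\mathbb C}$ is polystable---precisely the implication you are trying to establish. So as written your argument is a loop. The paper handles this direction by a self-contained socle argument: the socle $W\subset E_{\mathbb C}$ is unique, hence preserved by $\delta$, hence descends to $W_{\mathbb R}\subset E$; if $W_{\mathbb R}\neq E$, polystability of $E$ produces a real complement whose complexification would enlarge the socle, a contradiction. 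An alternative repair, if you prefer to stay closer to your outline, is to note that $\mathrm{End}(E_{\mathbb C})\cong\mathrm{End}(E)\otimes_{\mathbb R}\mathbb C$ is semisimple (each stable real summand has endomorphism ring a real division algebra), and a semistable bundle with semisimple endomorphism algebra is polystable.

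Your inductive argument for the converse implication (``$E_{\mathbb C}$ polystable $\Rightarrow E$ polystable'') is correct and uses the $\mathrm{Ext}^1$ descent in a pleasant way; the paper in fact leaves that direction implicit.
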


\begin{proof}
If $E_{\mathbb C}$ is semistable, then clearly
$E$ is semistable. To prove the converse, assume that
$E_{\mathbb C}$ is not semistable. Let
\begin{equation}\label{hn}
0\, =\, V_0\,\subset\, V_1\,\subset\, V_2 \,\subset\,\cdots 
\, \subset\, V_{\ell-1} \, \subset\, V_\ell \, =\,E_{\mathbb C}
\end{equation}
be the Harder--Narasimhan filtration of $E_{\mathbb C}$.

Let $\sigma$ be the anti--holomorphic involution
of $X_{\mathbb C}$ as in eqn. \eqref{sig}. We have an
isomorphism 
$$
\delta\, :\, E_{\mathbb C}\, \longrightarrow\,
\sigma^*\overline{E_{\mathbb C}}
$$
as in eqn. \eqref{tau}. Since both the
filtrations
$$
0\, =\,\delta(V_0)\, \subset\,
\delta(V_1)\,\subset\,\delta(V_2) \,\subset\,\cdots\, \subset\,
\delta(V_{\ell-1})\,\subset\,
\delta(V_\ell)\,=\,\delta(E_{\mathbb C})
\, =\, \sigma^*\overline{E_{\mathbb C}}
$$
and
$$
0\, =\, \sigma^*\overline{V_0} \, \subset\,
\sigma^*\overline{V_1}\,\subset\, \sigma^*\overline{V_2} \,\subset
\,\cdots\, \subset\, \sigma^*\overline{V_{\ell-1}} \, \subset\,
\sigma^*\overline{V_\ell} \, =\,\sigma^*\overline{E_{\mathbb C}}
$$
of $\sigma^*\overline{E_{\mathbb C}}$ satisfy the conditions of
the Harder--Narasimhan filtration, from the uniqueness of the
Harder--Narasimhan filtration we conclude that
$\delta(V_i)\, =\, \sigma^*\overline{V_i}$ for all
$i\, \in\, [1\, ,\ell]$. Therefore, the subbundle $V_i\, \subset
\, E_{\mathbb C}$ is given by a subbundle of $E$ by base
change from $\mathbb R$ to $\mathbb C$.
Hence the vector bundle $E$ is not semistable.

To prove the second statement in the lemma, we first
note that if $V$ and $W$ are two semistable vector bundles
over $X_{\mathbb C}$ with
$$
\frac{\text{degree}(V)}{\text{rank}(V)}\, >\,
\frac{\text{degree}(W)}{\text{rank}(W)}\, ,
$$
then $H^1(X_{\mathbb C},\, W^*\bigotimes V)\, =\, 0$. Indeed,
by Serre duality,
$$
H^1(X_{\mathbb C},\, W^*\otimes V)^*\, =\,
H^0(X_{\mathbb C},\, W^*\otimes V\otimes K_{X_{\mathbb C}})\,
\cong \, H^0(X_{\mathbb C},\, W^*\otimes V)\, =\, 0
$$
as $K_{X_{\mathbb C}}$ is trivializable.
Therefore, if $E$ over $X$ is not semistable, then the
filtration in eqn. \eqref{hn} splits completely. In other
words, there is a filtration preserving isomorphism
$$
E_{\mathbb C}\, \longrightarrow\, \bigoplus_{i=1}^\ell
\frac{V_i}{V_{i-1}}\, .
$$
We already noted that each $V_i/V_{i-1}$ corresponds to
a real algebraic vector bundle over $X$. Therefore,
the second statement follows from Lemma \ref{lem0}.

To prove the final statement in the lemma, take a
polystable vector bundle $E$ over $X$. So, in particular,
$E$ is semistable, and hence the corresponding vector bundle
$E_{\mathbb C}$ over $X_{\mathbb C}$ is semistable. Assume
that $E_{\mathbb C}$ is not polystable. Let
$$
W\, \subset\, E_{\mathbb C}
$$
be the socle of $E_{\mathbb C}$. In other words, $W$ is
the unique maximal polystable subbundle of $E_{\mathbb C}$
with
$$
\frac{\text{degree}(W)}{\text{rank}(W)}\, =\,
\frac{\text{degree}(E_{\mathbb C})}{\text{rank}(E_{\mathbb C})}
$$
(see \cite[p. 23, Lemma 1.5.5]{HL}).

Clearly, both the
subbundles $\delta(W)$ and $\sigma^*\overline{W}$
of $\sigma^*\overline{(E_{\mathbb C})}$ satisfy the conditions
for a socle. From the uniqueness of the socle we conclude that
that the two subbundles $\delta(W)$ and $\sigma^*\overline{W}$
coincide. Therefore, $W$ corresponds to a subbundle of $E$.
Let $W_{\mathbb R}\, \subset\, E$ be the subbundle such that
$W\, =\, W_{\mathbb R}\bigotimes_{\mathbb R}{\mathbb C}$.

Since $E$ is polystable, if $W_{\mathbb R}\, \not=\, E$,
then there is another nonzero subbundle $W'\, \subset\, E$
such that
$$
W_{\mathbb R}\oplus W' \, =\, E\, .
$$
In that case, the socle of
$$
E_{\mathbb C}\, =\,
(W_{\mathbb R}\otimes_{\mathbb R}{\mathbb C})\oplus
(W'\otimes_{\mathbb R}{\mathbb C})\, =\,
W\oplus (W'\otimes_{\mathbb R}{\mathbb C})
$$
is $W\bigoplus W''$, where $W''$ is the socle of
$W'\bigotimes_{\mathbb R}{\mathbb C}$. But this
contradicts the condition that $W$ is the
socle of $E_{\mathbb C}$. Thus, $W_{\mathbb R}\,
=\, E$. This completes the proof of the lemma.
\end{proof}

The statement (2) in Lemma \ref{lem1} has the following
corollary:

\begin{definition}
{\rm A real algebraic vector bundle over $X$ is called}
decomposable {\rm if it splits into a direct sum of
real algebraic vector bundles of positive ranks. A
real algebraic vector bundle is called} indecomposable 
{\rm if it is not decomposable.}
\end{definition}

\begin{corollary}\label{cor0}
Any indecomposable real algebraic vector bundle over $X$ 
is semistable.
\end{corollary}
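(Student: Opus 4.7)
The plan is to derive the corollary as an essentially immediate consequence of part (2) of Lemma \ref{lem1}. That lemma asserts that every real algebraic vector bundle $E$ over the Klein bottle $X$ admits a decomposition
$$
E \, \cong \, E_1 \oplus E_2 \oplus \cdots \oplus E_\ell
$$
into a direct sum of semistable real algebraic vector bundles (the summands being the real forms of the successive quotients $V_i/V_{i-1}$ of the Harder--Narasimhan filtration of $E_{\mathbb C}$, which splits because $H^1$ between higher and lower slope semistable pieces vanishes on the elliptic curve $X_{\mathbb C}$).

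Assuming $E$ is indecomposable, I would argue by contradiction or, more directly, simply note that the decomposition furnished by Lemma \ref{lem1}(2) cannot have more than one factor of positive rank. Hence $\ell \, =\, 1$ and $E\, =\, E_1$ is itself one of the semistable summands. This shows that $E$ is semistable, which is the desired conclusion.

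The only subtle point worth being explicit about is that the summands $E_i$ produced by Lemma \ref{lem1}(2) really are real algebraic vector bundles (not merely holomorphic vector bundles on $X_{\mathbb C}$). This is guaranteed because, as observed in the proof of Lemma \ref{lem1}, the Harder--Narasimhan filtration of $E_{\mathbb C}$ is preserved by the conjugate linear isomorphism $\delta\, :\, E_{\mathbb C} \, \longrightarrow\, \sigma^*\overline{E_{\mathbb C}}$ (by uniqueness of the filtration), so each quotient $V_i/V_{i-1}$ descends to a real algebraic vector bundle, and Lemma \ref{lem0} then ensures that the splitting descends as well. There is no real obstacle in the argument; the work has all been done in Lemma \ref{lem1}, and the corollary is just the contrapositive of the statement that a non-semistable bundle admits a nontrivial direct sum decomposition.
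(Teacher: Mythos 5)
Your proposal is correct and is exactly the paper's argument: the corollary is stated as an immediate consequence of Lemma \ref{lem1}(2) (a non-semistable bundle decomposes nontrivially into semistable summands, so an indecomposable bundle must be semistable), and the paper offers no further proof beyond that observation. Your extra remark about the summands being genuinely real algebraic is already contained in the proof of Lemma \ref{lem1}(2), so nothing new is needed.
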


\section{Vector bundles of rank two over a Klein bottle}\label{sec5}

Let $X$ be a geometrically connected smooth projective curve of
genus one, defined over $\mathbb R$, which does not have any points
defined over $\mathbb R$. In this section we will describe the
isomorphism classes of all real algebraic vector bundles of rank
two over $X$.

We first recall that Proposition \ref{prop1} says
that for any even integer $n$, there is a real algebraic line
bundle over $X$ of degree $n$. Also, in Theorem \ref{thm2}
and Proposition \ref{prop1}, all line bundles over $X$ are
described. Therefore, it is enough to classify the
isomorphism classes of indecomposable
real algebraic vector bundles over $X$ of rank two and degree
$d$, with $0\, \leq\, d\, \leq\, 3$.

For any vector bundle $E$ over $X$ of rank two and degree $d$,
we have
$$
d\, =\, \text{degree}(E)\, =\, \text{degree}(\bigwedge^2 E)\, .
$$
Therefore, from Proposition \ref{prop1} it follows that $d$
is an even integer. Thus, we need to consider the two
cases of $d\, =\, 0$ and $d\, =\, 2$.

Let $X_{\mathbb C}\, :=\, X\times_{\mathbb R}{\mathbb C}$
be the smooth elliptic curve defined over $\mathbb C$. Let
\begin{equation}\label{si.c}
\sigma\, :\, X_{\mathbb C}\, \longrightarrow\,
X_{\mathbb C}
\end{equation}
be the fixed point free anti--holomorphic involution
as in eqn. \eqref{sig} given by the conjugation of $\mathbb C$.

Take any complex algebraic line bundle $L$ over $X_{\mathbb C}$.
Consider the algebraic vector bundle
\begin{equation}\label{2si}
S(L)\, :=\, L\oplus \sigma^*\overline{L}
\end{equation}
of rank two over $X_{\mathbb C}$. For $S(L)$ we have
$$
\sigma^*\overline{S(L)}\, =\, \sigma^*\overline{L\oplus
\sigma^*\overline{L}}\, =\, \sigma^*\overline{L}\oplus L\, .
$$
Therefore, there is a canonical isomorphism
\begin{equation}\label{2sl}
\sigma_L\, :\, S(L)\, \longrightarrow\, \sigma^*\overline{S(L)}
\end{equation}
defined by $(v_1\, ,v_2)\, \longmapsto\, (v_2\, ,v_1)$.

It is easy to check that the
composition $(\sigma^*\overline{\sigma_L})\circ\sigma_L$
is the identity automorphism of $S(L)$. Therefore, the
pair $(S(L)\, , \sigma_L)$ give a real algebraic vector
bundle over $X$ of rank two (see Section \ref{sec2}).
Let $V(L)$ denote the real algebraic vector
bundle over $X$ of rank two given by $(S(L)\, , \sigma_L)$.

\begin{proposition}\label{prop2}
Let $L$ and $L'$ be two complex algebraic line bundles over
$X_{\mathbb C}$. Let $V(L)$ and $V(L')$ be the corresponding
real algebraic
vector bundles of rank two over $X$ (see the above construction).
The two vector bundles $V(L)$ and $V(L')$ are isomorphic if and
only if either $L$ is isomorphic to $L'$ or
$\sigma^*\overline{L}$ is isomorphic to $L'$.

If ${\rm degree}(L)\, =\, 1$, then the
above vector bundle $V(L)$ is stable.

Let $E$ be an indecomposable real algebraic vector bundle
over $X$ of rank two and degree two. Then there is a
complex algebraic line bundle $L$ of degree one over 
$X_{\mathbb C}$ such that the corresponding vector bundle
$V(L)$ over $X$ is isomorphic to $E$.
\end{proposition}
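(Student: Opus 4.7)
The plan is to complexify, apply Atiyah's classification of vector bundles on a complex elliptic curve, and use the fact that on $X_{\mathbb C}$ every degree one line bundle $L$ satisfies $\sigma^*\overline{L}\,\not\cong\, L$ to control the possibilities.

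By Corollary \ref{cor0}, the indecomposable bundle $E$ is semistable, so by Lemma \ref{lem1}(1) the complexification $E_{\mathbb C}$ is semistable of rank two and degree two on the elliptic curve $X_{\mathbb C}$. Since $\gcd(2,2)\,=\,2$, Atiyah's theorem tells us that $E_{\mathbb C}$ is isomorphic to one of two types of bundles: either a direct sum $L_1\oplus L_2$ of two line bundles of degree one, or a bundle of the form $F_2\otimes L$, where $L$ has degree one and $F_2$ is the unique nontrivial self--extension of ${\mathcal O}_{X_{\mathbb C}}$ by itself. In either case, the real structure on $E$ forces a holomorphic isomorphism $E_{\mathbb C}\,\cong\,\sigma^*\overline{E_{\mathbb C}}$.

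The next step is to rule out the case $E_{\mathbb C}\,\cong\,F_2\otimes L$. The key observation is that $F_2\otimes L$ has a unique maximal line subbundle, namely the copy of $L$ coming from the subbundle ${\mathcal O}\,\hookrightarrow\,F_2$ twisted by $L$. Since $\sigma^*\overline{F_2}$ is again a nontrivial self--extension of ${\mathcal O}_{X_{\mathbb C}}$, uniqueness gives $\sigma^*\overline{F_2}\,\cong\,F_2$, so $\sigma^*\overline{E_{\mathbb C}}\,\cong\,F_2\otimes\sigma^*\overline{L}$. The unique--maximal--subbundle property then forces $L\,\cong\,\sigma^*\overline{L}$, which contradicts the fact (Proposition \ref{prop1}) that the involution $\sigma_1$ on ${\rm Pic}^1(X_{\mathbb C})$ is fixed--point free.

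It remains to handle the case $E_{\mathbb C}\,\cong\,L_1\oplus L_2$ with $\deg L_i\,=\,1$. Krull--Schmidt applied to $\sigma^*\overline{L_1}\oplus\sigma^*\overline{L_2}\,\cong\,L_1\oplus L_2$ yields exactly two possibilities: either $\sigma^*\overline{L_i}\,\cong\,L_i$ for $i\,=\,1,2$, or $\sigma^*\overline{L_1}\,\cong\,L_2$ (and hence $\sigma^*\overline{L_2}\,\cong\,L_1$). The first possibility (including the subcase $L_1\,\cong\,L_2$) is again forbidden by the fixed--point--freeness of $\sigma_1$. Therefore $L_2\,\cong\,\sigma^*\overline{L_1}$, so $E_{\mathbb C}\,\cong\,L_1\oplus\sigma^*\overline{L_1}\,=\,S(L_1)\,=\,V(L_1)_{\mathbb C}$, and Lemma \ref{lem0} gives $E\,\cong\,V(L_1)$, completing the proof.

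I expect the main obstacle to be the $F_2\otimes L$ case: one has to carefully justify why the real structure is incompatible with this non--polystable form, and the cleanest route is through the unique maximal line subbundle. The rest is a routine Krull--Schmidt argument combined with the earlier results about $\sigma_1$.
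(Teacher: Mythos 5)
Your argument addresses only the third assertion of the proposition; the first two assertions are not proved at all. The statement also claims (i) that $V(L)\,\cong\, V(L')$ if and only if $L\,\cong\, L'$ or $\sigma^*\overline{L}\,\cong\, L'$, for line bundles $L$, $L'$ of \emph{arbitrary} degree, and (ii) that $V(L)$ is stable whenever ${\rm degree}(L)\,=\,1$. Neither follows from what you wrote. For (i), the ``only if'' direction needs Atiyah's Krull--Schmidt theorem applied to $L\oplus\sigma^*\overline{L}\,\cong\, L'\oplus\sigma^*\overline{L'}$, and the ``if'' direction needs Lemma \ref{lem0} (an isomorphism $S(L)\,\cong\, S(L')$ of the complexifications descends to an isomorphism of the real bundles). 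For (ii), one notes that $S(L)\,=\,L\oplus\sigma^*\overline{L}$ is polystable, hence $V(L)$ is polystable by Lemma \ref{lem1}(3), and a polystable but non-stable real bundle of rank two and degree two would split off a real line subbundle of degree one, which is impossible by Proposition \ref{prop1}. Both arguments are short, but they are separate claims and must appear in the proof.

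For the third assertion your reasoning is correct, and it takes a genuinely different (if slightly longer) route than the paper. The paper first shows that an indecomposable $E$ of rank two and degree two is in fact \emph{stable} --- a destabilizing real line subbundle would have degree one, contradicting Proposition \ref{prop1} --- so $E_{\mathbb C}$ is polystable by Lemma \ref{lem1}(3) and the non-split Atiyah type $F_2\otimes L$ (in your notation) never arises. You instead keep that case alive and eliminate it via the unique maximal line subbundle of $F_2\otimes L$ together with $\sigma^*\overline{F_2}\,\cong\, F_2$; this is valid, but it is extra work, since stability of $E$ is both easier to obtain and is needed anyway for assertion (ii). Your Krull--Schmidt analysis of the split case, the use of the fixed-point-freeness of $\sigma_1$ on ${\rm Pic}^1(X_{\mathbb C})$, and the final appeal to Lemma \ref{lem0} all match the paper's argument.
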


\begin{proof}
First assume that $V(L)\, \cong\, V(L')$. So we have
$$
L\oplus \sigma^*\overline{L}\, \cong\, L'\oplus
\sigma^*\overline{L'}\, .
$$
Now from \cite[p. 315, Theorem 2]{At1} it follows immediately
that either $L\, \cong\,L'$ or $\sigma^*\overline{L}\, \cong\,L'$.

If $L\, \cong\,L'$ or $\sigma^*\overline{L}\, \cong\,L'$, then
$S(L)\, \cong\, S(L')$, where $S(L)$ is defined in
eqn. \eqref{2si}. If $S(L)\, \cong\, S(L')$, from Lemma
\ref{lem0} it follows that $V(L)$ and $V(L')$ are isomorphic.
This completes the proof of the first part.

Now assume that ${\rm degree}(L)\, =\, 1$.
Since $L\bigoplus \sigma^*\overline{L}$ is polystable, the
vector bundle $V(L)$ is polystable (see Lemma \ref{lem1}(3)).
As there are no line bundles over $X$ of degree one
(see Proposition \ref{prop1}), the polystable vector bundle $V(L)$
of rank two and degree two must be stable.

We will now prove the final part of the proposition.
Since $E$ is indecomposable, from Corollary \ref{cor0}
we know that $E$ is semistable.

We will show that $E$ is stable. To prove this, assume that
$E$ is not stable. Therefore, there is a line subbundle
$$
\xi\, \subset\, E
$$
with $\text{degree}(\xi)\, =\,1$. But this contradicts the
second assertion in Proposition \ref{prop1}. Therefore, we
conclude that the vector bundle $E$ is stable.

Since $E$ is stable, from Lemma \ref{lem1}(3) it follows
immediately that the corresponding vector bundle
\begin{equation}\label{cf}
E_{\mathbb C}\, :=\, E\otimes_{\mathbb R} {\mathbb C}
\end{equation}
over $X_{\mathbb C}$ is polystable. There are no stable
vector bundles over $X_{\mathbb C}$ of rank two and
degree two \cite[p. 428, Lemma 11]{At2}; see also Theorem
\ref{thm3} given in Section \ref{st62}. Consequently,
$$
E_{\mathbb C}\, =\, L_1\oplus L_2\, ,
$$
with $\text{degree}(L_1)\, =\, 1\, =\, \text{degree}(L_2)$.

Let
$$
\delta\, :\, L_1\oplus L_2\, =\, E_{\mathbb C}\,
\longrightarrow\, \sigma^*\overline{E_{\mathbb C}}\, =\,
\sigma^*\overline{L_1}\oplus \sigma^*\overline{L_2}
$$
be the isomorphism corresponding to $E$ (see eqn. \eqref{tau}).
Now from \cite[p. 315, Theorem 2]{At1} it follows
that either $L_1\, \cong\,\sigma^*\overline{L_1}$ or
$L_2\, \cong\,\sigma^*\overline{L_1}$. On the other hand, since
$\text{degree}(L_1)\, =\, 1$, from the second statement in
Proposition \ref{prop1} it follows that
$L_1$ is not isomorphic to $\sigma^*\overline{L_1}$. Thus,
we conclude that
$$
L_2\, \cong\,\sigma^*\overline{L_1}\, .
$$

Therefore, $E_{\mathbb C}\, \cong\, L_1\bigoplus
\sigma^*\overline{L_1} \,=\, S(L_1)$, where
$S(L_1)$ is defined in eqn. \eqref{2si}. Now from Lemma
\ref{lem0} it follows that the real algebraic vector bundle
$E$ over $X$ is isomorphic to $V(L_1)$. This completes the
proof of the proposition.
\end{proof}

Proposition \ref{prop2} has the following corollary:

\begin{corollary}\label{cor1}
The isomorphism classes of
indecomposable real algebraic vector bundles
over $X$ of rank two and degree two are parametrized by
the quotient space
$$
X_{\mathbb C}/\sigma\, =\,
{\rm Pic}^1(X_{\mathbb C})/\sigma_1\, ,
$$
where $\sigma_1$ is the fixed point free involution defined
by $L\, \longmapsto\, \sigma^*\overline{L}$.

Any indecomposable real algebraic vector bundle
over $X$ of rank two and degree two is stable.
\end{corollary}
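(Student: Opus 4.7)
The plan is to derive both assertions as essentially immediate consequences of Proposition \ref{prop2}, packaging the already-proved content into the quotient description and the identification ${\rm Pic}^1(X_{\mathbb C}) \, = \, X_{\mathbb C}$.

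First I would handle the stability assertion. This is not something new: in the proof of Proposition \ref{prop2}, the very first step for an indecomposable $E$ of rank two and degree two was to note that $E$ is semistable by Corollary \ref{cor0}, and then to rule out any destabilizing line subbundle $\xi \, \subset\, E$ by observing that such a $\xi$ would have degree one, contradicting Proposition \ref{prop1}, which forbids odd degree real algebraic line bundles on $X$. So I would simply cite that Proposition \ref{prop2} already establishes stability for any indecomposable rank-two degree-two bundle, yielding the second assertion of the corollary.

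For the parametrization, I would package the other conclusions of Proposition \ref{prop2} into a quotient. The proposition provides a surjection $L\, \longmapsto\, V(L)$ from ${\rm Pic}^1(X_{\mathbb C})$ onto the set of isomorphism classes of indecomposable real algebraic vector bundles over $X$ of rank two and degree two, and it identifies the fibers: $V(L)\, \cong\, V(L')$ if and only if $L\, \cong\, L'$ or $L\, \cong\, \sigma^*\overline{L'}$. In other words, two degree-one line bundles $L$ and $L'$ give isomorphic vector bundles exactly when they lie in the same orbit of the involution $\sigma_1$, so the isomorphism classes are canonically in bijection with ${\rm Pic}^1(X_{\mathbb C})/\sigma_1$.

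Finally I would explain the identification with $X_{\mathbb C}/\sigma$. The Abel--Jacobi map $y\, \longmapsto\, \mathcal{O}_{X_{\mathbb C}}(y)$ is an isomorphism $X_{\mathbb C}\, \longrightarrow\, {\rm Pic}^1(X_{\mathbb C})$, and it intertwines $\sigma$ with $\sigma_1$, since $\sigma^*\overline{\mathcal{O}_{X_{\mathbb C}}(y)}\, =\, \mathcal{O}_{X_{\mathbb C}}(\sigma(y))$ (this is the same computation already used in Proposition \ref{prop1} to identify $({\rm Pic}^1(Y)\, , \sigma_1)$ with $(Y\, , \sigma)$). Consequently $\sigma_1$ is fixed-point free because $\sigma$ is, and the quotients $X_{\mathbb C}/\sigma$ and ${\rm Pic}^1(X_{\mathbb C})/\sigma_1$ are canonically identified. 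There is no real obstacle in this argument; it is purely a matter of recording the content of Proposition \ref{prop2} in the quotient form demanded by the statement.
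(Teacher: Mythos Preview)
Your proposal is correct and matches the paper's approach: the corollary is stated there without proof as an immediate consequence of Proposition \ref{prop2}, and you have simply spelled out that deduction, together with the identification of $({\rm Pic}^1(X_{\mathbb C}),\sigma_1)$ with $(X_{\mathbb C},\sigma)$ already noted in Proposition \ref{prop1}.
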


We will now investigate the indecomposable vector 
bundles over $X$ of rank two and degree zero.

Since the canonical line bundle of $X_{\mathbb C}$ is
trivializable, from Lemma \ref{lem0} it follows that
the canonical line bundle $K_X$ of $X$ is also trivializable.

Let $\xi$ be a real algebraic line bundle over $X$. Since
$K_X$ is trivializable, from Serre duality we have
$$
\dim H^1(X,\, \xi^*\otimes\xi)\, =\, \dim H^0(X,\, {\mathcal O}_X)
\, =\, 1\, .
$$
Therefore, for $\xi$, there is a unique, up to an isomorphism,
real algebraic vector bundle $W(\xi)$ over $X$ which is a nontrivial
extension of $\xi$ by $\xi$. In other words, there is a unique,
up to an isomorphism, indecomposable
real algebraic vector bundle $W(\xi)$ over $X$
that fits in a short exact sequence
\begin{equation}\label{eqw}
0\, \longrightarrow\, \xi\, \longrightarrow\, W(\xi)
\, \longrightarrow\, \xi\, \longrightarrow\, 0\, .
\end{equation}

\begin{proposition}\label{prop3}
Let $E$ be a semistable real algebraic vector bundle
over $X$ of rank two and degree zero. Then exactly one of
the following three statements is valid.
\begin{enumerate}
\item There is a real algebraic line bundle $\xi$
over $X$ of degree zero such that $W(\xi)$ is isomorphic to $E$
(the vector bundle $W(\xi)$ is defined in eqn. \eqref{eqw}).

\item There are real algebraic line bundles $\xi_1$ and $\xi_2$
over $X$ of degree zero such that $\xi_1\bigoplus\xi_2$ is
isomorphic to $E$.

\item There is a line bundle $L\, \in\, {\rm Pic}^0(X_{\mathbb C})
\setminus {\rm Pic}^0(X)$ such that $V(L)$ is isomorphic to
$E$, where ${\rm Pic}^0(X)$ denotes the group of line bundles
of the form $L\bigotimes_{\mathbb R}{\mathbb C}$ with $L$ being
a real algebraic line bundle over $X$ of degree zero,
and $V(L)$ is defined prior to Proposition \ref{prop2}.
\end{enumerate}
For any $L\, \in\, {\rm Pic}^0(X_{\mathbb C})
\setminus {\rm Pic}^0(X)$, the real algebraic vector bundle
$V(L)$ over $X$ is stable.
\end{proposition}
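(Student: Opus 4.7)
The plan is to pass to the complexification $E_{\mathbb C}\,=\,E\otimes_{\mathbb R}{\mathbb C}$, which by Lemma \ref{lem1}(1) is semistable of rank two and degree zero on the elliptic curve $X_{\mathbb C}$. Atiyah's classification \cite{At2} gives exactly three possibilities: (i) $E_{\mathbb C}\,\cong\,L_1\oplus L_2$ with $L_1\,\not\cong\,L_2$ in $\text{Pic}^0(X_{\mathbb C})$, (ii) $E_{\mathbb C}\,\cong\,L\oplus L$, or (iii) $E_{\mathbb C}\,\cong\,F_2\otimes L$, the indecomposable self-extension of $L$ by $L$, where $F_2$ denotes Atiyah's unique non-trivial self-extension of ${\mathcal O}_{X_{\mathbb C}}$. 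The real structure on $E$ is recorded by the isomorphism $\delta\,\colon\,E_{\mathbb C}\,\longrightarrow\,\sigma^*\overline{E_{\mathbb C}}$ with $(\sigma^*\overline{\delta})\circ\delta\,=\,\text{Id}$; Atiyah's Krull--Schmidt theorem \cite{At1}, applied to the resulting isomorphism $E_{\mathbb C}\,\cong\,\sigma^*\overline{E_{\mathbb C}}$, forces each line-bundle factor $L$ above to be fixed by $\sigma_0$, except in case (i) where the two factors may be swapped.

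I would then split $\text{Pic}^0(X_{\mathbb C})^{\sigma_0}$ via Theorem \ref{thm2} into the ``real'' component $\text{Pic}^0(X)$ and the ``non-real'' component, recorded by the sign $\epsilon_L\,\in\,\{\pm 1\}$ defined by $(\sigma^*\overline{\beta})\circ\beta\,=\,\epsilon_L\cdot\text{Id}_L$ for any $\beta\,\colon\,L\,\to\,\sigma^*\overline{L}$. In the swap subcase of (i), $E_{\mathbb C}\,\cong\,S(L_1)$, hence $E\,\cong\,V(L_1)$ by Lemma \ref{lem0}: case~(3). In the non-swap subcase of (i), $\text{Hom}(L_i,\sigma^*\overline{L_j})\,=\,0$ for $i\neq j$ forces $\delta$ to be diagonal, so compatibility gives $\epsilon_{L_1}\,=\,\epsilon_{L_2}\,=\,+1$; writing $L_i\,=\,\xi_{i,{\mathbb C}}$, Lemma \ref{lem0} then yields $E\,\cong\,\xi_1\oplus\xi_2$: case~(2). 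In case (ii), expressing $\delta$ via $\text{Hom}(L,\sigma^*\overline{L})\,=\,{\mathbb C}\cdot\beta$ as a matrix $A\in GL_2({\mathbb C})$ reduces compatibility to $\overline{A}\,A\,=\,\epsilon_L^{-1}I_2$: for $\epsilon_L\,=\,+1$ this gives $E\,\cong\,\xi\oplus\xi$ (case~(2)), while for $\epsilon_L\,=\,-1$ it admits solutions such as $A\,=\,\bigl(\begin{smallmatrix}0&1\\-1&0\end{smallmatrix}\bigr)$, so $E_{\mathbb C}\,\cong\,V(L)_{\mathbb C}$ and hence $E\,\cong\,V(L)$ by Lemma \ref{lem0} (case~(3)). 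In case (iii) with $L\,=\,\xi_{\mathbb C}$ real, the injection $H^1(X,{\mathcal O}_X)\otimes_{\mathbb R}{\mathbb C}\,\hookrightarrow\,H^1(X_{\mathbb C},{\mathcal O}_{X_{\mathbb C}})$ preserves non-triviality of the defining extension, so $W(\xi)_{\mathbb C}\,\cong\,F_2\otimes L\,\cong\,E_{\mathbb C}$ and Lemma \ref{lem0} gives $E\,\cong\,W(\xi)$: case~(1). The decisive step is eliminating case (iii) with non-real $L$: here $\text{End}(F_2\otimes L)\,=\,{\mathbb C}\cdot\text{Id}\,+\,{\mathbb C}\cdot\epsilon$ with $\epsilon$ nilpotent and ``real'' for the conjugate-linear involution induced by a reference $\delta_0\,=\,\alpha\otimes\beta$ (with $\alpha$ the real structure of $F_2\,=\,W({\mathcal O}_X)_{\mathbb C}$), and every $\delta\,=\,\delta_0\circ(a+b\epsilon)$ satisfies
$$
(\sigma^*\overline{\delta})\circ\delta\,=\,\epsilon_L\bigl(\overline{a}+\overline{b}\epsilon\bigr)\bigl(a+b\epsilon\bigr)\,=\,\epsilon_L\bigl(|a|^2\,+\,2\,\text{Re}(\overline{a}b)\,\epsilon\bigr),
$$
whose scalar part $\epsilon_L|a|^2$ can never equal $+1$ when $\epsilon_L\,=\,-1$. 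Mutual exclusivity of (1), (2), (3) is immediate, since $E_{\mathbb C}$ is indecomposable in~(1) and polystable in~(2) and~(3), and (2), (3) are distinguished by whether the line-bundle summands of $E_{\mathbb C}$ lie in $\text{Pic}^0(X)$.

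For the concluding assertion, let $L\,\in\,\text{Pic}^0(X_{\mathbb C})\setminus\text{Pic}^0(X)$. Then $V(L)_{\mathbb C}\,=\,L\oplus\sigma^*\overline{L}$ is a direct sum of degree-zero line bundles, hence polystable; so $V(L)$ is polystable by Lemma \ref{lem1}(3). If $V(L)\,\cong\,\xi_1\oplus\xi_2$ for some real algebraic line bundles $\xi_1,\xi_2$ on $X$, then $\xi_{1,{\mathbb C}}\oplus\xi_{2,{\mathbb C}}\,\cong\,L\oplus\sigma^*\overline{L}$, and Atiyah's Krull--Schmidt \cite{At1} would force $\xi_{i,{\mathbb C}}\,\cong\,L$ for some $i$, contradicting $L\,\notin\,\text{Pic}^0(X)$. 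Hence $V(L)$ admits no non-trivial decomposition as a real algebraic bundle, and as a polystable rank-two real bundle it must be stable. The chief technical obstacle is the sign computation eliminating subcase (iii) with non-real $L$, which requires carefully unravelling the conjugate-linear involution induced by $\delta_0$ on $\text{End}(F_2\otimes L)$ and verifying that the nilpotent generator $\epsilon$ is indeed fixed by it.
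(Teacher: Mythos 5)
Your proof is correct, but it is organized quite differently from the paper's. The paper runs the trichotomy on the real side: it shows the three cases are exactly ``$E$ not polystable'', ``$E$ polystable but not stable'', and ``$E$ stable'', handling the first by producing a real degree-zero line subbundle (so the non-split extension is automatically an extension of \emph{real} line bundles, and Serre duality forces sub and quotient to coincide), the second trivially, and the third by decomposing the polystable $E_{\mathbb C}$ and observing that $\delta$ must swap the two line-bundle factors. You instead run the trichotomy on the complex side, starting from Atiyah's list of semistable rank-two degree-zero bundles on $X_{\mathbb C}$ and then enumerating, by Galois descent, all real structures $\delta$ on each complex model. The paper's route is shorter and entirely sidesteps the most delicate point of yours, namely ruling out a real form of $F_2\otimes L$ when $L$ is $\sigma_0$-fixed but not real: in the paper this case never arises because a non-polystable real $E$ automatically has a real line subbundle. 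Your route buys more: it classifies all real forms of each complex bundle, and your scalar-part computation $(\sigma^*\overline{\delta})\circ\delta = \epsilon_L(|a|^2 + \mathrm{nilpotent})$ is robust (you do not even need $\epsilon$ to be fixed by the induced conjugate-linear involution, only that the involution preserves the nilradical and conjugates scalars). Your cocycle analysis $\overline{A}A=\epsilon_L^{-1}I_2$ in the $L\oplus L$ case is also the same mechanism the paper deploys later for Theorem \ref{prop5}, applied here to a non-simple endomorphism algebra. Two points you should make explicit but which are one-line fixes: in the swap subcase of (i) you need $L_1\not\cong\sigma^*\overline{L_1}$ (it follows since $\sigma^*\overline{L_1}\cong L_2\not\cong L_1$) to conclude $L_1\notin{\rm Pic}^0(X)$ and hence that the output really is case (3); and in case (iii) you should justify that the isomorphism $F_2\otimes L\cong\sigma^*\overline{F_2\otimes L}\cong F_2\otimes\sigma^*\overline{L}$ forces $L\cong\sigma^*\overline{L}$ (e.g.\ because $L$ is the unique degree-zero line subbundle, i.e.\ the socle, of $F_2\otimes L$).
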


\begin{proof}
We will show that the above three cases
correspond to following three mutually exclusive
and exhaustive cases:
\begin{enumerate}
\item{} $E$ is not polystable;

\item $E$ is polystable, but not stable; and

\item $E$ is stable.
\end{enumerate}

The semistable
vector bundle $E$ is not polystable if and only if
it fits in a non--split short exact sequence
$$
0\, \longrightarrow\, \xi\, \longrightarrow\, E
\, \longrightarrow\, \eta\, \longrightarrow\, 0
$$
of real algebraic vector bundles over $X$, where
$\text{degree}(\xi)\, =\, 0\, =\, \text{degree}(\eta)$.
If $\eta\, \not=\, \xi$, then
$$
H^1(X,\, \eta^*\otimes \xi)\, =\, 0
$$
and the above exact sequence splits. In other words,
$\eta\, =\, \xi$ if $E$ is not polystable. Therefore,
we conclude that $E\, \cong\, W(\xi)$ for some real
algebraic line bundle $\xi$ over $X$ of degree zero if
and only if $E$ is not polystable.

It is obvious that the semistable vector
bundle $E$ is polystable but not stable if and only if
$E\, \cong\, \xi_1\bigoplus \xi_2$, where $\xi_1$ and $\xi_2$ are
some real algebraic line bundles of degree zero.

Take any line bundle $L\, \in\, {\rm Pic}^0(X_{\mathbb C})$. We will
show that $V(L)$ is stable if and only if
$L\, \in\, {\rm Pic}^0(X_{\mathbb C})\setminus {\rm Pic}^0(X)$.

Since the vector bundle $S(L)$ constructed in eqn. \eqref{2si}
is polystable, from Lemma \ref{lem1}(3) it follows that $V(L)$
is polystable. Now, $V(L)$ is not stable if and only if it
decomposes as a direct sum of two line bundles. If $V(L)$
decomposes as a direct sum of two line bundles, from
\cite[p. 315, Theorem 2]{At1} it follows
that $L\, \in\, {\rm Pic}^0(X)$. Conversely, if
$L\, \in\, {\rm Pic}^0(X)$, then $S(L)\, =\, L\bigoplus L$,
as $L\, =\, \sigma^*\overline{L}$.
Therefore, using Lemma \ref{lem0}
we conclude that for $L\, \in\, {\rm Pic}^0(X_{\mathbb C})$,
the vector bundle $V(L)$ is isomorphic to a direct sum of two
real algebraic line bundles over $X$ of degree zero.

Thus, the vector bundle $V(L)$ is stable if and only if
$L\, \notin\, {\rm Pic}^0(X)$.

Let $E$ be a stable vector bundle over $X$ of rank two and degree
zero. Then its complexification $E_{\mathbb C}$ (defined in
eqn. \eqref{cf}) is polystable (see Lemma \ref{lem1}(3)).
In other words,
$$
E_{\mathbb C}\, =\, L_1\oplus L_2\, ,
$$
where $L_i$, $i\, =\,1,2$, are complex algebraic line bundles
of degree zero over $X_{\mathbb C}$. Let
$$
\delta\, :\, L_1\oplus L_2\, =\, E_{\mathbb C}\,
\longrightarrow\, \sigma^*\overline{E_{\mathbb C}}\, =\,
\sigma^*\overline{L_1}\oplus \sigma^*\overline{L_2}
$$
be the isomorphism as in eqn. \eqref{tau} corresponding to $E$.
Since $E$ is stable, the image $\delta(L_1)$ is not contained
in the subbundle $\sigma^*\overline{L_1}\, \subset\,
\sigma^*\overline{E_{\mathbb C}}$. Consequently, $\delta$ induces a
nonzero homomorphism from $L_1$ to
$\sigma^*\overline{L_2}$. Since any nonzero homomorphism 
between two line bundles of degree zero is an isomorphism,
we have $L_1\, \cong\, \sigma^*\overline{L_2}$.

Since $L_1\, \cong\, \sigma^*\overline{L_2}$, we have
$S(L_2) \, \cong\, E_{\mathbb C}$, where $S(L_2)$
is defined in eqn. \eqref{2si}. Now from Lemma \ref{lem0}
it follows that $E\, \cong\, V(L_2)$.

We have already shown that if $L_2\, \in\, {\rm Pic}^0(X)$,
then $ V(L_2)$ is not stable. This completes the
proof of the proposition.
\end{proof}

The following proposition complements Proposition \ref{prop3}.

\begin{proposition}\label{prop4}
Let $\xi_1$ and $\xi_2$ be real algebraic line bundles
over $X$ of degree zero such that $W(\xi_1)$ is isomorphic
$W(\xi_2)$. Then $\xi_1$ is isomorphic to $\xi_2$.

Let $\xi_i$, $i\, =\, 1,2$, and $\xi'_i$, $i\, =\, 1,2$,
be real algebraic line bundles
over $X$ of degree zero such that
$$
\xi_1\oplus \xi_2\, \cong\, \xi'_1\oplus \xi'_2\, .
$$
Then either $\xi_1\, \cong\, \xi'_1$ or $\xi_1\, \cong\, \xi'_2$.

If $L_1, L_2\, \in\, {\rm Pic}^0(X_{\mathbb C})\setminus
{\rm Pic}^0(X)$ are line bundles such that $V(L_1)$ is
isomorphic to $V(L_2)$, then either $L_1\, \cong\, L_2$
or $L_1\, \cong\, \sigma^*\overline{L_2}$.
\end{proposition}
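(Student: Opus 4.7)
The plan is to handle the three assertions in turn: the second and third reduce to Krull--Schmidt type uniqueness via complexification, while the first requires a structural argument about the nontrivial extension \eqref{eqw}.

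For assertion (1), I would characterize the subbundle $\xi\subset W(\xi)$ intrinsically as the unique line subbundle of degree zero. Let $L\subset W(\xi)$ be any line subbundle with $\text{degree}(L)=0$, and consider the composition $L\hookrightarrow W(\xi)\twoheadrightarrow \xi$. If this composition were nonzero, it would be a nonzero morphism between line bundles of the same degree zero, hence an isomorphism (a nonzero map of invertible sheaves on a curve is injective with cokernel of length equal to the degree difference, which here is zero). Such an isomorphism would split \eqref{eqw}, contradicting its nontriviality. Therefore $L$ factors through the subbundle $\xi\hookrightarrow W(\xi)$, and by the same reasoning $L=\xi$ as subbundles. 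Any isomorphism $W(\xi_1)\cong W(\xi_2)$ must then carry the unique degree-zero line subbundle of the domain to that of the codomain, yielding $\xi_1\cong \xi_2$.

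For assertion (2), complexifying the given isomorphism produces $\xi_{1,\mathbb C}\oplus \xi_{2,\mathbb C}\cong \xi'_{1,\mathbb C}\oplus \xi'_{2,\mathbb C}$ on $X_{\mathbb C}$. Atiyah's Krull--Schmidt theorem \cite[p.~315, Theorem 2]{At1}, already invoked in the proof of Proposition \ref{prop2}, forces the equality of multisets $\{\xi_{1,\mathbb C},\xi_{2,\mathbb C}\}=\{\xi'_{1,\mathbb C},\xi'_{2,\mathbb C}\}$. Hence $\xi_{1,\mathbb C}$ is isomorphic to $\xi'_{1,\mathbb C}$ or to $\xi'_{2,\mathbb C}$, and Lemma \ref{lem0} lifts this to a real isomorphism $\xi_1\cong \xi'_1$ or $\xi_1\cong \xi'_2$. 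Assertion (3) is a direct consequence of the first part of Proposition \ref{prop2}, whose if-and-only-if criterion applies to arbitrary complex algebraic line bundles; the hypothesis $L_i\in \text{Pic}^0(X_{\mathbb C})\setminus \text{Pic}^0(X)$ is not used for this step and merely distinguishes the stable case identified in Proposition \ref{prop3}.

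The principal obstacle is assertion (1): recovering the line bundle $\xi$ intrinsically from $W(\xi)$. The key input is that on the genus-one curve $X_{\mathbb C}$ a nonzero map between line bundles of the same degree is automatically an isomorphism, which rules out any rival degree-zero line subbundle and pins down $\xi$ as an invariant of the extension. Once this uniqueness is in hand, assertions (2) and (3) follow formally from Krull--Schmidt and from results already established in the paper.
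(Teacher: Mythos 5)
Your proposal is correct and follows essentially the same route as the paper: for the first assertion the paper composes $\xi_1\hookrightarrow W(\xi_1)\stackrel{f}{\to}W(\xi_2)\twoheadrightarrow\xi_2$ and shows this map must vanish because a nonzero map of degree-zero line bundles is an isomorphism and would split the nontrivial extension, which is the same splitting argument you package as ``$\xi$ is the unique degree-zero line subbundle of $W(\xi)$''. The second and third assertions are handled in the paper exactly as you do, by Atiyah's Krull--Schmidt theorem together with Lemma \ref{lem0}.
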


\begin{proof}
Let
$$
f\, :\, W(\xi_1)\, \longrightarrow\, W(\xi_2)
$$
be an isomorphism. Consider the composition homomorphism
$$
\xi_1 \, \hookrightarrow\, W(\xi_1)\, \stackrel{f}{\longrightarrow}
\, W(\xi_2)\, \longrightarrow\,\xi_2
$$
(see eqn. \eqref{eqw}). Since any nonzero homomorphism
$\xi_1\, \longrightarrow\,\xi_2$ is an isomorphism, and $W(\xi_2)$
is a nontrivial extension of $\xi_2$ by $\xi_2$, the above
composition homomorphism must vanish identically. Hence
$f$ induces an isomorphism of $\xi_1$ with $\xi_2$.

The second statement in the proposition follows from
\cite[p. 315, Theorem 2]{At1} and Lemma \ref{lem0}.

The third statement in the proposition follows from
\cite[p. 315, Theorem 2]{At1}. This completes the proof
of the proposition.
\end{proof}

Corollary \ref{cor0} and Proposition \ref{prop3} together
have the following corollary:

\begin{corollary}\label{cor-1}
Let $E$ be an indecomposable real algebraic vector bundle
over $X$ of rank two and degree zero. Then exactly one of
the following two statements is valid.
\begin{enumerate}
\item The vector bundle $E$ is not polystable.
There is a real algebraic line bundle $\xi$
over $X$ of degree zero such that $W(\xi)$ is isomorphic to $E$
(the vector bundle $W(\xi)$ is defined in eqn. \eqref{eqw}).

\item The vector bundle $E$ is stable.
There is a line bundle $L\, \in\, {\rm Pic}^0(X_{\mathbb C})
\setminus {\rm Pic}^0(X)$ such that $V(L)$ is isomorphic to
$E$, where ${\rm Pic}^0(X)$ denotes the group of line bundles
of the form $L\bigotimes_{\mathbb R}{\mathbb C}$ with $L$ being
a real algebraic line bundle over $X$ of degree zero,
and $V(L)$ is defined prior to Proposition \ref{prop2}.
\end{enumerate}
\end{corollary}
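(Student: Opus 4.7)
The plan is to deduce the corollary by combining Corollary \ref{cor0} with Proposition \ref{prop3} and then eliminating the middle case of the trichotomy. First, since $E$ is assumed indecomposable, Corollary \ref{cor0} tells us that $E$ is semistable. Because $E$ has rank two and degree zero, Proposition \ref{prop3} then applies and places $E$ into one of exactly three mutually exclusive classes: (a) $E\cong W(\xi)$ for some real algebraic degree zero line bundle $\xi$, in which case $E$ is not polystable; (b) $E\cong \xi_1\oplus\xi_2$ for two real algebraic degree zero line bundles $\xi_1,\xi_2$, in which case $E$ is polystable but not stable; or (c) $E\cong V(L)$ for some $L\in {\rm Pic}^0(X_{\mathbb C})\setminus {\rm Pic}^0(X)$, in which case $E$ is stable (the stability of $V(L)$ in this range was the last assertion of Proposition \ref{prop3}).

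Next I would rule out case (b). This is immediate: a direct sum decomposition $\xi_1\oplus\xi_2$ into two real algebraic line bundles is a splitting of $E$ as a sum of real algebraic vector bundles of positive rank, directly contradicting the assumption that $E$ is indecomposable. Thus only cases (a) and (c) survive, matching the two cases of the corollary.

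Finally I would note that these two remaining cases are genuinely distinct and exhaustive for indecomposable $E$: case (a) corresponds to $E$ being not polystable, while case (c) corresponds to $E$ being stable. These are the two possibilities allowed by Proposition \ref{prop3} once the polystable-but-not-stable alternative has been discarded. Since the trichotomy in Proposition \ref{prop3} is already shown to be disjoint, cases (a) and (c) cannot both hold. This completes the argument.

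No step here is really the main obstacle, since all the difficult work has been done in Corollary \ref{cor0} and Proposition \ref{prop3}; the only thing to verify is that the polystable-but-not-stable case cannot arise for an indecomposable bundle, which is essentially by definition of indecomposability.
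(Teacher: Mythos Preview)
Your proof is correct and is exactly the argument the paper has in mind: the paper states this corollary without explicit proof, simply attributing it to Corollary \ref{cor0} and Proposition \ref{prop3}, and your write-up spells out precisely how these combine (semistability from indecomposability, then the trichotomy, then eliminating the decomposable middle case).
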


\begin{corollary}\label{cor2}
The isomorphism classes of
stable real algebraic vector bundles
over $X$ of rank two and degree zero are parametrized by
the quotient space $({\rm Pic}^0(X_{\mathbb C})\setminus
{\rm Pic}^0(X))/\sigma_0$, where $\sigma_0$
is the involution defined by
$L\, \longmapsto\, \sigma^*\overline{L}$.

The isomorphism classes of polystable, but not stable, real
algebraic vector bundles over $X$ of rank two and degree zero
are parametrized by the quotient space $({\rm Pic}^0(X)\times
{\rm Pic}^0(X))/({\mathbb Z}/2{\mathbb Z})$ for the action
of ${\mathbb Z} /2{\mathbb Z}$ that switches the factors.

The isomorphism classes of
semistable, but not polystable, real algebraic vector bundles
over $X$ of rank two and degree zero are parametrized by
${\rm Pic}^0(X)$.
\end{corollary}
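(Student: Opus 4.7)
The plan is to read off all three parametrizations directly by combining the existence statement of Proposition \ref{prop3} with the isomorphism criteria in Proposition \ref{prop4}. Proposition \ref{prop3} partitions the set of isomorphism classes of semistable real algebraic vector bundles over $X$ of rank two and degree zero into three mutually exclusive families according to whether the bundle is (i) not polystable, (ii) polystable but not stable, or (iii) stable; the three parts of Corollary \ref{cor2} correspond to these three families in reverse order. So each part reduces to identifying the correct set of parameters and the correct equivalence relation.

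First I would treat the stable case. By Proposition \ref{prop3}, every stable rank-two degree-zero bundle is isomorphic to some $V(L)$ with $L\,\in\,{\rm Pic}^0(X_{\mathbb C})\setminus {\rm Pic}^0(X)$, and conversely every such $V(L)$ is stable. The third statement of Proposition \ref{prop4} says $V(L_1)\,\cong\,V(L_2)$ iff $L_1\,\cong\,L_2$ or $L_1\,\cong\,\sigma^*\overline{L_2}$, so the isomorphism classes are in bijection with the orbits of $\sigma_0$ on ${\rm Pic}^0(X_{\mathbb C})\setminus{\rm Pic}^0(X)$, which is exactly $({\rm Pic}^0(X_{\mathbb C})\setminus{\rm Pic}^0(X))/\sigma_0$.

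Next I would treat the polystable-but-not-stable case. By Proposition \ref{prop3}, every such bundle is isomorphic to $\xi_1\oplus\xi_2$ with $\xi_1,\xi_2\,\in\,{\rm Pic}^0(X)$ (note that when $\xi_1\,\cong\,\xi_2$ one must check that $\xi\oplus\xi$ is indeed polystable and not of the form $W(\xi)$; this is immediate since $W(\xi)$ is by construction a nontrivial extension, hence not split). The second statement of Proposition \ref{prop4} says $\xi_1\oplus\xi_2\,\cong\,\xi_1'\oplus\xi_2'$ iff the unordered pairs coincide, giving the parameter space $({\rm Pic}^0(X)\times{\rm Pic}^0(X))/({\mathbb Z}/2{\mathbb Z})$ with the swap action. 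Finally, the semistable-not-polystable case: every such bundle is isomorphic to some $W(\xi)$ with $\xi\,\in\,{\rm Pic}^0(X)$ by Proposition \ref{prop3}, and by the first statement of Proposition \ref{prop4} two of these are isomorphic iff the $\xi$'s are isomorphic, so the parameter space is ${\rm Pic}^0(X)$.

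There is essentially no obstacle here, since both the existence and the isomorphism criteria have already been established. The only small points requiring care are verifying that the three families are genuinely disjoint (so that a bundle of type $\xi\oplus\xi$ is not counted in the $W(\xi)$ family, and vice versa) and that $V(L)$ is genuinely stable exactly on the complement ${\rm Pic}^0(X_{\mathbb C})\setminus{\rm Pic}^0(X)$; both are already contained in the proof of Proposition \ref{prop3}. One could also formally invoke Lemma \ref{lem0} at each step to pass between isomorphism of real algebraic bundles on $X$ and isomorphism of the underlying complex bundles on $X_{\mathbb C}$, but Proposition \ref{prop4} already incorporates this translation.
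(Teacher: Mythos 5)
Your proposal is correct and follows exactly the route the paper intends: Corollary \ref{cor2} is stated without a separate proof precisely because it is read off by combining the trichotomy and existence statements of Proposition \ref{prop3} with the isomorphism criteria of Proposition \ref{prop4}, which is what you do. The small points you flag (disjointness of the three families, stability of $V(L)$ exactly off ${\rm Pic}^0(X)$) are indeed already contained in the proof of Proposition \ref{prop3}, so nothing is missing.
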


\begin{remark}
{\rm Corollary \ref{cor2}
describes the isomorphism classes of indecomposable real
algebraic vector bundles over $X$ of rank two and degree zero.
Corollary \ref{cor1} describes the isomorphism classes of
indecomposable real algebraic vector bundles over $X$ of rank
two and degree two. As it was noted earlier, this gives
a classification of all real algebraic vector bundles over
$X$ of rank two.}
\end{remark}

\section{Stable vector bundles of higher rank}\label{sec6}

In this section we will classify all stable real algebraic
vector bundles over a Klein bottle.
We will first investigate the case where the degree
and the rank are mutually coprime.

\subsection{Indecomposable bundles in coprime case}

We begin by recalling a theorem from \cite{At2}.

Fix a positive integer $r$. Take any integer $d$ such that
$r$ and $d$ are mutually coprime. Let $Z$ be a smooth
elliptic curve defined over $\mathbb C$. Let
\begin{equation}\label{gar}
\Gamma_r\, \subset\, \text{Pic}^0(Z)
\end{equation}
be the subgroup of line bundles $L$ over $Z$ such that
$L^{\otimes r}\, \cong\, {\mathcal O}_Z$.

Since $r$ and $d$ are mutually coprime, from
Lemma \ref{lem1}(2) it follows
that an algebraic vector bundle $E$ over $Z$ of rank $r$
and degree $d$ is indecomposable if and only if $E$ is
semistable (see the proof of Lemma \ref{lem1}(2)),
and it also follows that
$E$ is semistable if and only if it is stable.
The following theorem is a special case of
\cite[p. 442, Theorem 10]{At2}.

\begin{theorem}[\cite{At2}]\label{thm.at2}
Let ${\mathcal M}_Z(r,d)$ denote the moduli space
of stable vector bundles over $Z$ of rank $r$ and degree $d$,
where $r$ and $d$ are mutually coprime.
Then ${\mathcal M}_Z(r,d)$ is nonempty.
Take any stable vector bundle $E\, \in\, {\mathcal M}_Z(r,d)$,
and consider the morphism
$$
\gamma_E\, :\, {\rm Pic}^0(Z)\, \longrightarrow\,
{\mathcal M}_Z(r,d)
$$
defined by $L\, \longmapsto\, E\bigotimes L$. Then $\gamma_E$
is a surjective \'etale Galois covering. Furthermore,
$$
\gamma^{-1}_E(E)\, =\, \Gamma_r\, ,
$$
where $\Gamma_r$ is defined in eqn. \eqref{gar}. Therefore,
$\Gamma_r$ is the Galois group for the covering $\gamma_E$.
\end{theorem}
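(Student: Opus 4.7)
The plan is to derive this statement by combining Atiyah's classification of indecomposable bundles on an elliptic curve with the elementary structure of multiplication by $r$ on $Z$. First I would settle non-emptiness: for $r = 1$ the moduli space is just $\text{Pic}^d(Z)$, and for higher $r$ I would invoke Atiyah's existence result, observing that coprimality of $r$ and $d$ forces any indecomposable bundle of slope $d/r$ to be stable, since a destabilizing subbundle of strictly smaller rank would have to share the non-integer slope $d/r$, which is impossible. In parallel $\gamma_E$ is well defined, because tensoring by a degree zero line bundle preserves slope and sends proper subbundles to proper subbundles of the same slope.

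Next I would compute the fiber. One direction is immediate: if $E \otimes L \cong E$, taking $r$-th exterior powers yields $\det(E) \otimes L^{\otimes r} \cong \det(E)$, hence $L \in \Gamma_r$. For the reverse inclusion, given $L \in \Gamma_r$ I would compare $E$ and $E \otimes L$: both are stable (hence indecomposable) bundles of rank $r$ and degree $d$ with the same determinant, so Atiyah's uniqueness of indecomposable bundles with prescribed rank, degree, and determinant in the coprime case yields $E \otimes L \cong E$. For \'etaleness I would compute the differential of $\gamma_E$ at $L_0$: via Kodaira--Spencer it is the map
$$
H^1(Z,\, \mathcal{O}_Z) \, \longrightarrow \, H^1(Z,\, \text{End}(E \otimes L_0))
$$
induced by the inclusion $\mathcal{O}_Z \hookrightarrow \text{End}(E \otimes L_0)$ by the identity endomorphism. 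Stability of $E \otimes L_0$ gives $H^0(\text{End}^0(E \otimes L_0)) = 0$, where $\text{End}^0$ is the trace-free part; since $K_Z$ is trivial, Serre duality then forces $H^1(\text{End}^0(E \otimes L_0)) = 0$, so that the differential is identified with an isomorphism onto $H^1(Z,\, \text{End}(E \otimes L_0))$, which is the tangent space to $\mathcal{M}_Z(r,d)$ at $E \otimes L_0$.

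For surjectivity, \'etaleness together with the projectivity of $\text{Pic}^0(Z)$ makes the image of $\gamma_E$ both open and closed; combined with the connectedness of $\mathcal{M}_Z(r,d)$ (which follows from Atiyah's identification of this moduli space with $\text{Pic}^d(Z)$ via the determinant map, and in particular shows it is irreducible of dimension one), this forces $\gamma_E$ to be onto. The Galois property of the covering is then automatic: the fibers of $\gamma_E$ are precisely the $\Gamma_r$-orbits under the translation action of $\Gamma_r$ on $\text{Pic}^0(Z)$. The step I expect to be the main obstacle is the inclusion $\Gamma_r \subseteq \gamma_E^{-1}(E)$: it cannot be deduced from stability and slope numerics alone, and relies on the substantive content of \cite[Theorem 10]{At2}, namely that two stable bundles of coprime rank and degree sharing a determinant must be isomorphic.
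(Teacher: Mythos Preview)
The paper does not supply its own proof of this theorem: it is quoted verbatim as a special case of \cite[p.~442, Theorem~10]{At2} and used as a black box. So there is no argument in the paper to compare against; your proposal is an expansion of what Atiyah's result says, not a reconstruction of anything the authors wrote.

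Your sketch is essentially sound, but two points deserve tightening. First, your stability argument for indecomposable bundles only rules out subbundles of \emph{equal} slope; to exclude subbundles of \emph{larger} slope you need the separate fact that on an elliptic curve every indecomposable bundle is semistable (the Harder--Narasimhan filtration splits because $\mathrm{Ext}^1$ between semistable bundles of decreasing slope vanishes, exactly as in the proof of Lemma~\ref{lem1}(2)). Second, your surjectivity argument is more elaborate than necessary and mildly circular: you invoke connectedness of $\mathcal{M}_Z(r,d)$ via Atiyah's identification with $\mathrm{Pic}^d(Z)$, but once you grant that identification the whole statement is immediate. Indeed, composing $\gamma_E$ with the determinant bijection $\det:\mathcal{M}_Z(r,d)\to\mathrm{Pic}^d(Z)$ gives $L\mapsto \det(E)\otimes L^{\otimes r}$, which is visibly the $r$-th power isogeny on the elliptic curve $\mathrm{Pic}^0(Z)$ followed by a translation; this is surjective with kernel $\Gamma_r$, and the \'etale Galois structure follows without any Kodaira--Spencer computation. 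Your differential calculation is correct but unnecessary once Atiyah's Theorem~10 is in hand.
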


Let $(Y_\tau\, ,\sigma)$ be a Klein bottle defined as in
eqn. \eqref{yt} and eqn. \eqref{si}. Let
$X$ denote the geometrically connected smooth projective curve
of genus one defined over $\mathbb R$ without any real points
corresponding to $(Y_\tau\, ,\sigma)$. Let $d$ be any 
even integer. As before, $r$ is a positive integer such that
$r$ and $d$ are mutually coprime. We note that $r$ is an
odd integer.

We will construct a stable real algebraic vector bundle over
$X$ of rank $r$ and degree $d$. For that purpose, set
$$
\Lambda'\, :=\, \{m\tau\sqrt{-1}+nr\, \in\, {\mathbb C}\,\mid\,
m,n\, \in\, {\mathbb Z}\}\, .
$$
So $\Lambda'$ is a sub-lattice of $\Lambda$ defined in
eqn. \eqref{la.}. Therefore, we have a Galois covering map
\begin{equation}\label{ypt}
f\, :\, Y'_\tau\, :=\, {\mathbb C}/\Lambda'\, \longrightarrow
\, {\mathbb C}/\Lambda \, =:\, Y_\tau
\end{equation}
with Galois group ${\mathbb Z}/r{\mathbb Z}$.
Consider the anti--holomorphic involution
\begin{equation}\label{sip}
\sigma'\, :\, Y'_\tau\, \longrightarrow\, Y'_\tau
\end{equation}
of the quotient space $Y'_\tau$ in eqn. \eqref{ypt}
induced by the map ${\mathbb C}\, \longrightarrow\,
{\mathbb C}$ defined by
$$
z\, \longmapsto\, \overline{z} +\frac{r}{2}\, .
$$
Since $r$ is an odd integer, we have
\begin{equation}\label{ci}
f\circ \sigma'\, =\, \sigma\circ f\, ,
\end{equation}
where $f$ is defined in eqn. \eqref{ypt} and $\sigma$ is
defined in eqn. \eqref{si}.

As $r$ is an odd integer, the involution $\sigma'$
in eqn. \eqref{sip} does not have any fixed points.
Therefore, $(Y'_\tau\, ,\sigma')$ is a Klein bottle.

Fix a pair $(\xi\, ,\beta')$, where $\xi$ is a
holomorphic line bundle over $Y'_\tau$ of degree $d$, and
\begin{equation}\label{bp}
\beta'\, :\, \xi\, \longrightarrow\, (\sigma')^*
\overline{\xi}
\end{equation}
is an isomorphism as in eqn. \eqref{e4}
such that the composition homomorphism
$$
\xi\, \stackrel{\beta'}{\longrightarrow}\, (\sigma')^*
\overline{\xi}\,
\stackrel{(\sigma')^*\overline{\beta'}}{\longrightarrow}\,
(\sigma')^*\overline{(\sigma')^*\overline{\xi}}\, =\, \xi
$$
is the identity automorphism of $\xi$; the map
$\sigma'$ is defined in eqn. \eqref{sip}. Since $d$
is an even integer, such a pair $(\xi\, ,\beta')$ exist;
see Proposition \ref{prop1}. Let $G$
denote the Galois
group for the covering map $f$ in eqn. \eqref{ypt}. Set
\begin{equation}\label{wxi}
\widehat{\xi}\, =\, \bigoplus_{\alpha\, \in\, G} \alpha^*\xi
\end{equation}
which is a holomorphic vector bundle over $Y'_\tau$ of rank $r$
and degree $rd$.

The direct image $f_*\widehat{\xi}$ over $Y_\tau$ is equipped
with an action of $G$. Let
\begin{equation}\label{wx}
W\, :=\, (f_*\widehat{\xi})^G
\end{equation}
be the invariant part of this action of $G$ on $f_*\widehat{\xi}$.
It is easy to see that $W$ is a holomorphic vector bundle over
$Y_\tau$ of rank $r$. Furthermore, the pullback $f^*W$
is canonically identified with $\widehat{\xi}$. This immediately
implies that the degree of $W$ is $d$.

Since the vector bundle $\widehat{\xi}$ in eqn. \eqref{wxi}
is polystable, and $f^*W\, =\, \widehat{\xi}$, we conclude that
the vector bundle $W$ defined in eqn. \eqref{wx}
is also polystable.

Note that for any $\alpha\, \in\, G$, we have
$\alpha\circ\sigma'\, =\, \sigma'\circ\alpha$. Consequently,
$$
(\sigma')^*\overline{\widehat{\xi}}\, =\,
\bigoplus_{\alpha\in G} \alpha^*(\sigma')^*\overline{\xi}\, .
$$
Therefore,
the isomorphism $\beta'$ in eqn. \eqref{bp} gives an isomorphism
\begin{equation}\label{bi}
\widehat{\beta'}\, :=\,
\bigoplus_{\alpha\, \in\, G} \alpha^*\beta'
\, :\, \widehat{\xi}\, =\,
\bigoplus_{\alpha\, \in\, G} \alpha^*\xi\,
\longrightarrow\, \bigoplus_{\alpha\, \in\, G}
 \alpha^*(\sigma')^*\overline{\xi}
\, =\, (\sigma')^*\overline{\widehat{\xi}}\, .
\end{equation}
Since $(\sigma')^*\overline{\beta'}\circ\beta'\, =\, \text{Id}_\xi$,
we have
\begin{equation}\label{wbi} 
((\sigma')^*\overline{\widehat{\beta'}})\circ\widehat{\beta'}\, =\,
\text{Id}_{\widehat{\xi}}\, .
\end{equation}

Using eqn. \eqref{ci}, the isomorphism $\widehat{\beta'}$
in eqn. \eqref{bi} induces an isomorphism
$$
\widehat{\beta}\, :\, f_*\widehat{\xi}\, \longrightarrow\,
\sigma^*(\overline{f_*\widehat{\xi}})\, .
$$
which commutes with the action of the group $G\, =\,
\text{Gal}(f)$ on $f_*\widehat{\xi}$. Therefore,
$\widehat{\beta}$ induces an isomorphism
\begin{equation}\label{deb2}
\beta\, :\, W \, \longrightarrow\,\sigma^*\overline{W}\, ,
\end{equation}
where $W$ is defined in eqn. \eqref{wx}. From eqn. \eqref{wbi}
it follows that $(\sigma^*\overline{\widehat{\beta}})\circ
\widehat{\beta}\, =\, \text{Id}_{f_*\widehat{\xi}}$,
which in turn implies that
\begin{equation}\label{de22}
(\sigma^*\overline{\beta})\circ\beta \, =\, \text{Id}_{W}\, .
\end{equation}

Therefore, the pair $(W\, ,\beta)$ defines a real algebraic
vector bundle over $X$ of rank $r$ and degree $d$,
where $X$ is the real algebraic curve given by
$(Y_\tau\, ,\sigma)$. Let
$W_{\mathbb R}$ denote this real algebraic vector bundle over $X$.
We noted earlier that $W$ is polystable. Therefore, from Lemma
\ref{lem1}(3) it follows that $W_{\mathbb R}$ is
polystable. Since $r$ and $d$ are mutually coprime, we
conclude that the polystable vector bundle $W_{\mathbb R}$ is stable.

We summarize the above construction in the following lemma:

\begin{lemma}\label{lem3}
Let $X$ be a geometrically connected smooth projective curve
of genus one, defined over $\mathbb R$, without any real points.
Let $d$ be an even integer and $r$ a positive integer such
that $r$ and $d$ are mutually coprime. Then there is a stable
real algebraic vector bundle over $X$ of rank $r$ and degree $d$.
\end{lemma}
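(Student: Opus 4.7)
The plan is to construct the required bundle explicitly by Galois descent along an étale cover. The key numerical observation is that since $\gcd(r,d)=1$ and $d$ is even, $r$ must be odd, which is exactly what allows an odd-degree cyclic étale cover of the Klein bottle $X$ to be itself a Klein bottle.

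First I would realize $X$ as $(Y_\tau,\sigma)$ via Theorem \ref{thm1} and introduce the sublattice $\Lambda' \subset \Lambda$ obtained by scaling the real generator by $r$. This yields a Galois étale cover $f : Y'_\tau \to Y_\tau$ of degree $r$, with Galois group $G \cong \mathbb{Z}/r\mathbb{Z}$. The map $z \mapsto \bar z + r/2$ descends to an anti-holomorphic involution $\sigma'$ of $Y'_\tau$; because $r$ is odd, $\sigma'$ is fixed-point free, so $(Y'_\tau,\sigma')$ is itself a Klein bottle, and by construction $f \circ \sigma' = \sigma \circ f$.

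Next, Proposition \ref{prop1} applied to $(Y'_\tau,\sigma')$ produces a real algebraic line bundle of degree $d$ on the corresponding real curve (this uses that $d$ is even), equivalently a holomorphic line bundle $\xi$ on $Y'_\tau$ of degree $d$ with an isomorphism $\beta' : \xi \to (\sigma')^*\overline{\xi}$ satisfying $(\sigma')^*\overline{\beta'} \circ \beta' = \mathrm{Id}_\xi$. I would then set $\widehat\xi := \bigoplus_{\alpha \in G} \alpha^*\xi$, which carries both a tautological $G$-action and a real structure $\widehat{\beta'} := \bigoplus_\alpha \alpha^*\beta'$ compatible with that action. Define $W := (f_* \widehat\xi)^G$. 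Standard Galois descent gives $f^* W \cong \widehat\xi$, so $W$ is holomorphic of rank $r$ and degree $d$ on $Y_\tau$.

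Finally, using $f \circ \sigma' = \sigma \circ f$ together with $G$-equivariance, $\widehat{\beta'}$ pushes forward and descends to an isomorphism $\beta : W \to \sigma^*\overline{W}$ with $\sigma^*\overline{\beta} \circ \beta = \mathrm{Id}_W$, giving a real algebraic vector bundle $W_{\mathbb R}$ of rank $r$ and degree $d$ on $X$. For stability: $\widehat\xi$ is polystable as a direct sum of line bundles, and since $f$ is étale, $f^*W \cong \widehat\xi$ polystable implies $W$ is polystable; hence $W_{\mathbb R}$ is polystable by Lemma \ref{lem1}(3). Coprimality $\gcd(r,d)=1$ then forces stability, since any proper direct summand of a polystable bundle of rank $r$ and degree $d$ would share the slope $d/r$ but have strictly smaller rank, contradicting the coprimality. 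The main obstacle I expect is not conceptual but notational — the bookkeeping required to verify that the real structure $\widehat{\beta'}$ is indeed $G$-equivariant and that the intertwining $f\sigma'=\sigma f$ correctly produces a $\beta$ on $W$ squaring to the identity; everything else is essentially automatic.
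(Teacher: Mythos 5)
Your proposal is correct and follows essentially the same route as the paper: the same sublattice $\Lambda'\subset\Lambda$ giving the degree-$r$ cyclic cover, the same lifted involution $z\mapsto\overline z+r/2$ (using that $r$ is odd), the same input $(\xi,\beta')$ from Proposition \ref{prop1}, the same descent of $\widehat\xi=\bigoplus_{\alpha\in G}\alpha^*\xi$ to $W=(f_*\widehat\xi)^G$ with its real structure, and the same polystability-plus-coprimality conclusion. No gaps.
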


Let $\text{Pic}^0(X)$ denote the group of real algebraic line
bundles over $X$ of degree zero. From Theorem \ref{thm2} we know
that $\text{Pic}^0(X)$ is identified with
${\mathbb R}/{\mathbb Z}$. The identification sends any
$t\, \in\, {\mathbb R}$ to the line bundle corresponding to
${\mathcal O}_{Y_\tau}(\underline{t}-
\underline{0})$ (as before, $\underline{z}$ is the image
of $z\, \in\, {\mathbb C}$ in the quotient space
$Y_\tau\, =\, {\mathbb C}/\Lambda$). Let 
\begin{equation}\label{garr}
\Gamma^{\mathbb R}_r\, \subset\, \text{Pic}^0(X)
\end{equation}
be the subgroup of line bundles $L$ over $X$ whose order
is a divisor of $r$, i.e.,
$L^{\otimes r}\, \cong\, {\mathcal O}_X$.

\begin{theorem}\label{prop5}
Let $X$ be a geometrically connected smooth projective curve
of genus one, defined over $\mathbb R$, without any real points.
Let $d$ be an even integer and $r$ a positive integer such
that $r$ and $d$ are mutually coprime. Let
${\mathcal M}_X(r,d)$ denote the set of all isomorphism 
classes of stable real algebraic vector bundles over $X$ of rank
$r$ and degree $d$. Fix $E\, \in\, {\mathcal M}_X(r,d)$
(it is nonempty by Lemma \ref{lem3}). Then
the map
$$
\gamma\, :\, {\rm Pic}^0(X)\, \longrightarrow\,
{\mathcal M}_X(r,d)
$$
defined by $L\, \longmapsto\, E\bigotimes L$ is surjective.
Furthermore, $\gamma$ induces a bijection of
${\mathcal M}_X(r,d)$ with the quotient
${\rm Pic}^0(X)/\Gamma^{\mathbb R}_r$, where the
subgroup $\Gamma^{\mathbb R}_r$ is defined in eqn. \eqref{garr}.
\end{theorem}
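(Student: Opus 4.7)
The plan is to verify four things about $\gamma$: well-definedness, descent to the quotient by $\Gamma^{\mathbb R}_r$, injectivity on this quotient, and surjectivity. Of these, only the last is substantial. The main inputs are Atiyah's Theorem \ref{thm.at2} applied to $E_{\mathbb C}$ on the complex elliptic curve $X_{\mathbb C}$, together with Lemma \ref{lem0}.

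First note that the coprimality $\text{gcd}(r,d)=1$ forces $E_{\mathbb C}$, which is polystable by Lemma \ref{lem1}(3), to be stable: any proper stable summand would have slope $d/r$ with smaller rank, contradicting coprimality. Well-definedness of $\gamma$ is immediate. If $L\in\Gamma^{\mathbb R}_r$, then $L_{\mathbb C}\in\Gamma_r$, so $E_{\mathbb C}\otimes L_{\mathbb C}\cong E_{\mathbb C}$ by Theorem \ref{thm.at2}, and Lemma \ref{lem0} gives $E\otimes L\cong E$. The same two steps run in reverse yield injectivity on $\text{Pic}^0(X)/\Gamma^{\mathbb R}_r$.

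The main obstacle is surjectivity. Given $E'\in\mathcal{M}_X(r,d)$, the complexification $E'_{\mathbb C}$ is stable, and Theorem \ref{thm.at2} yields $M\in\text{Pic}^0(X_{\mathbb C})$ with $E'_{\mathbb C}\cong E_{\mathbb C}\otimes M$. Applying $\sigma^*\overline{(\cdot)}$ and using the reality of $E$ and $E'$ gives $E_{\mathbb C}\otimes\sigma^*\overline{M}\cong E_{\mathbb C}\otimes M$, so $\alpha:=\sigma^*\overline{M}\otimes M^{-1}\in\Gamma_r$ and the class $[M]\in\text{Pic}^0(X_{\mathbb C})/\Gamma_r$ is $\sigma_0$-fixed. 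Working in the uniformization $\text{Pic}^0(Y_\tau)\cong\mathbb C/\Lambda$ of Section \ref{sec3}, the $\sigma_0$-fixed locus in $\text{Pic}^0(X_{\mathbb C})/\Gamma_r$ has two connected components, distinguished by imaginary part $0$ versus $\tau/(2r)$ modulo the sublattice $(1/r)\Lambda$. Because $r$ is odd (forced by the hypothesis), the homomorphism $\Gamma_r\to\Gamma_r$, $\beta\mapsto\beta\otimes(\sigma^*\overline{\beta})^{-1}$, surjects onto the subgroup of $\Gamma_r$ in which $\alpha$ must lie (direct computation in the coordinates $\Gamma_r=(1/r)\Lambda/\Lambda$). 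Choosing $\beta$ in the preimage of $\alpha$ and setting $M':=M\otimes\beta$ yields a strict isomorphism $\sigma^*\overline{M'}\cong M'$, with $E_{\mathbb C}\otimes M'\cong E'_{\mathbb C}$.

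It remains to show $M'$ is the complexification of a real line bundle rather than a pseudo-real (ghost) point of the fixed locus as in Theorem \ref{thm2}(iii). For any stable $F$ with an isomorphism $\delta:F\to\sigma^*\overline{F}$, the scalar $c(F,\delta):=(\sigma^*\overline{\delta})\circ\delta\in\text{End}(F)=\mathbb C$ is in fact a real number (a short computation using the canonical identification $\sigma^*\overline{\sigma^*\overline{A}}=A$), and its sign depends only on $F$; a real structure on $F$ exists precisely when this sign is positive, as is clear from eqn.~\eqref{comp.}. Moreover, the sign is multiplicative under tensor product, since $\delta_1\otimes\delta_2$ realises $\sigma^*\overline{F_1\otimes F_2}$ with $c(F_1\otimes F_2,\delta_1\otimes\delta_2)=c(F_1,\delta_1)\cdot c(F_2,\delta_2)$. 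Since $E_{\mathbb C}$ has sign $+$ (as $E$ is real) and $E_{\mathbb C}\otimes M'\cong E'_{\mathbb C}$ also has sign $+$ (as $E'$ is real), $M'$ has sign $+$, so $M'\cong L_{\mathbb C}$ for some $L\in\text{Pic}^0(X)$. Then $E_{\mathbb C}\otimes L_{\mathbb C}\cong E'_{\mathbb C}$, and Lemma \ref{lem0} yields $E\otimes L\cong E'$, completing the proof of surjectivity.
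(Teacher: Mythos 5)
Your proof is correct and follows essentially the same route as the paper's: complexify, invoke Atiyah's theorem to realize $\mathcal{M}_{X_{\mathbb C}}(r,d)$ as ${\rm Pic}^0(X_{\mathbb C})/\Gamma_r$, and then separate the genuinely real points of the fixed locus of $W\mapsto\sigma^*\overline{W}$ from the pseudo-real ones via the sign of $(\sigma^*\overline{\delta})\circ\delta$ and its multiplicativity under tensor product, exactly as in Theorem \ref{thm2}. The only organizational difference is that you lift the $\delta$-fixed class $[M]$ to an honest $\sigma_0$-fixed line bundle by solving $\alpha=\beta\otimes(\sigma^*\overline{\beta})^{-1}$ in $\Gamma_r$ (where oddness of $r$ enters as invertibility of $2$ modulo $r$), whereas the paper uses the same oddness to write the fixed locus of the descended involution explicitly as two circles and tests representatives $W\otimes\phi(\underline{t})$ and $W\otimes\phi(\underline{t+\sqrt{-1}\tau/2})$ by computing holonomies of flat connections.
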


\begin{proof}
Since $r$ and $d$ are mutually coprime,
if $F\, \in\, {\mathcal M}_X(r,d)$, then the corresponding
vector bundle $F\bigotimes_{\mathbb R}{\mathbb C}$ over
$Y_\tau$ is stable; see Lemma \ref{lem1}(3). Therefore,
the set ${\mathcal M}_X(r,d)$ is in bijective correspondence
with the isomorphism classes of pairs of the form $(W\, ,\beta)$,
where $W$ is a stable complex algebraic vector bundle over
$Y_\tau$ of rank $r$ and degree $d$, and
$$
\beta\, :\, W \, \longrightarrow\,\sigma^*\overline{W}
$$
is an isomorphism as in eqn. \eqref{deb2} such that
$(\sigma^*\overline{\beta})\circ\beta \, =\, \text{Id}_{W}$.

Let ${\mathcal M}_{Y_\tau}(r,d)$ denote the moduli space of
stable vector bundles over $Y_\tau$ of rank $r$ and degree $d$.
Since $r$ and $d$ are mutually coprime, the smooth variety
${\mathcal M}_{Y_\tau}(r,d)$ is projective. The variety
${\mathcal M}_{Y_\tau}(r,d)$ has an anti--holomorphic involution
\begin{equation}\label{is}
\delta_{(r,d)}\, :\, {\mathcal M}_{Y_\tau}(r,d)\, \longrightarrow
\, {\mathcal M}_{Y_\tau}(r,d)
\end{equation}
defined by $V\, \longmapsto\, \sigma^*\overline{V}$. We note
that ${\mathcal M}_X(r,d)$ is
a subset of the fixed--point
set of this involution $\delta_{(r,d)}$. As seen in Theorem
\ref{thm2}, this subset of the fixed--point set
is proper in general.

Let
\begin{equation}\label{1}
\delta_{(1,0)}\, :\, \text{Pic}^0(Y_\tau)\, \longrightarrow
\, \text{Pic}^0(Y_\tau)
\end{equation}
be the anti--holomorphic involution defined by
$L\, \longmapsto\, \sigma^*\overline{L}$.

Fix any stable real algebraic vector bundle $E$ over $X$ of rank
$r$ and degree $d$. Let $(W\, ,\beta)$ be the pair corresponding
to $E$, where $W$ is a complex algebraic vector bundle over
$Y_\tau$ and $\beta$ is an isomorphism as in eqn. \eqref{deb2}.
Therefore, $W$ is isomorphic $\sigma^*\overline{W}$. Let
\begin{equation}\label{gE}
\gamma_W\, :\, {\rm Pic}^0(Y_\tau)\, \longrightarrow\,
{\mathcal M}_{Y_\tau}(r,d)
\end{equation}
be the morphism defined by $L\, \longmapsto\, W\bigotimes
L$. From Theorem \ref{thm.at2} we know that $\gamma_W$
is a surjective
\'etale Galois covering with Galois group $\Gamma_r$, where
$$
\Gamma_r\, \subset\, \text{Pic}(Y_\tau)
$$
is the subgroup defined by all line bundles $L$ such that
$L^{\otimes r}\, \cong\, {\mathcal O}_{Y_\tau}$. Let
\begin{equation}\label{gE2}
\widetilde{\gamma_W}\, :\, {\rm Pic}^0(Y_\tau)/\Gamma_r
\, \longrightarrow\, {\mathcal M}_{Y_\tau}(r,d)
\end{equation}
be the isomorphism given by $\gamma_W$ defined in
eqn. \eqref{gE} (see Theorem \ref{thm.at2}).

Since the subgroup $\Gamma_r\, \subset\, {\rm Pic}^0(Y_\tau)$
is preserved by the anti--holomorphic involution $\delta_{(1,0)}$
defined in eqn. \eqref{1}, the involution $\delta_{(1,0)}$
descends to an anti--holomorphic involution
\begin{equation}\label{de}
\delta\, :\, {\rm Pic}^0(Y_\tau)/\Gamma_r
\, \longrightarrow\, {\rm Pic}^0(Y_\tau)/\Gamma_r
\end{equation}
of the quotient space.

As $W\, \cong\, \sigma^*\overline{W}$, we have
$$
\delta_{(r,d)} \circ\gamma_W\, =\, \gamma_W\circ\delta_{(1,0)}\, ,
$$
where $\delta_{(r,d)}$ (respectively, $\delta_{(1,0)}$)
is defined in eqn. \eqref{is} (respectively, \eqref{1}), and
$\gamma_W$ is defined in eqn. \eqref{gE}. This implies that
\begin{equation}\label{ii}
\delta_{(r,d)}\circ\widetilde{\gamma_W}\, =\,
\widetilde{\gamma_W}\circ\delta\, ,
\end{equation}
where $\widetilde{\gamma_W}$ is constructed in eqn.
\eqref{gE2} and $\delta$ is constructed in eqn. \eqref{de}.

Let
$$
q\, :\, {\rm Pic}^0(Y_\tau)\, \longrightarrow\,
{\rm Pic}^0(Y_\tau)/\Gamma_r
$$
be the quotient map.
Using the description of the fixed point set for the involution
$\delta_{(1,0)}$ given in Theorem \ref{thm2}, together with the
fact that $r$ is an odd integer, the following description of
the fixed point set for the involution $\delta$ is obtained:
\begin{equation}\label{iia}
({\rm Pic}^0(Y_\tau)/\Gamma_r)^{\delta}\, =\,
\{q(\phi(\underline{t}))\, \mid\, 0\leq t < 1/r\}\bigcup
\{q(\phi(\underline{t+\sqrt{-1}\tau/2}))\,
\mid\, 0\leq t < 1/r\}\, ,
\end{equation}
where $\phi$ is defined in eqn. \eqref{phi}
and $q$ is defined above. From eqn. \eqref{ii}
we know that the fixed point set for the involution
$\delta_{(r,d)}$ is the image
$$
{\mathcal M}_{Y_\tau}(r,d)^{\delta_{(r,d)}} \, =\,
\widetilde{\gamma_W}(({\rm Pic}^0(Y_\tau)/\Gamma_r)^{\delta})\, .
$$

Take any $t\,\in\, {\mathbb R}$ such that
$0\,\leq\, t\, <\, 1/r$. The line bundle $\phi(\underline{t})$
over $Y_\tau$ has the property that there is an isomorphism
$$
\delta_t\, :\, \phi(\underline{t})\, \longrightarrow\,
\sigma^*\overline{\phi(\underline{t})}
$$
such that $(\sigma^*\overline{\delta_t})\circ\delta_t\,=\,
\text{Id}_{\phi(\underline{t})}$; see
Theorem \ref{thm2}. The isomorphism
$$
\beta_t\, :=\, 
\beta\otimes \delta_t\, :\, W\otimes \phi(\underline{t})\,
\longrightarrow\, \sigma^*\overline{W}\otimes
\sigma^*\overline{\phi(\underline{t})}\, =\,
 \sigma^*(\overline{W\otimes \phi(\underline{t})})
$$
satisfies the identity $(\sigma^*\overline{\beta_t})\circ
\beta_t\, =\, \text{Id}_{W\otimes \phi(\underline{t})}$;
recall that $\beta$ is the isomorphism corresponding to
$E\, \in\, {\mathcal M}_X(r,d)$.

Therefore, $\gamma_W(\phi(\underline{t}))\, =\,
W\bigotimes \phi(\underline{t})$ corresponds to a vector
bundle in ${\mathcal M}_X(r,d)$.

Now take $\mu\, =\, t+\sqrt{-1}\tau/2$, where
$0\,\leq\, t\, <\, 1/r$. The line bundle $\phi(\underline{\mu})$
over $Y_\tau$ has the property that there is an isomorphism
$$
\delta'_\mu\, :\, \phi(\underline{\mu})\, \longrightarrow\,
\sigma^*\overline{\phi(\underline{\mu})}
$$
such that $(\sigma^*\overline{\delta'_\mu})\circ\delta'_\mu
\,=\, -\text{Id}_{\phi(\underline{\mu})}$; see
eqn. \eqref{id.r}. Consequently, the isomorphism
$$
\beta'_\mu\, :=\,
\beta\otimes \delta'_\mu\, :\, W\otimes \phi(\underline{\mu})\,
\longrightarrow\, \sigma^*\overline{W\otimes \phi(\underline{\mu})}
$$
satisfies the identity $(\sigma^*\overline{\beta'_\mu})\circ
\beta'_\mu\, =\, -\text{Id}_{W\otimes \phi(\underline{\mu})}$.

Since $W\bigotimes \phi(\underline{\mu})$ is stable, any
isomorphism from $W\bigotimes \phi(\underline{\mu})$ to
$\sigma^*\overline{W\bigotimes \phi(\underline{\mu})}$ must be
of the form $\lambda\beta'_\mu$ for some $\lambda\, \in\,
{\mathbb C}\setminus\{0\}$. As
$$
(\sigma^*\overline{\lambda\beta'_\mu})\circ
(\lambda\beta'_\mu)\, =\, \lambda\overline{\lambda}
\cdot (\sigma^*\overline{\beta'_\mu})\circ
\beta'_\mu\, =\, -\vert\lambda\vert^2\cdot
\text{Id}_{W\otimes \phi(\underline{\mu})}
\, \not=\, 1\, ,
$$
we conclude that there is no $E\, \in\, {\mathcal M}_X(r,d)$
such that $\gamma_W(\phi(\underline{\mu}))\, =\,
W\bigotimes \phi(\underline{\mu})$ is isomorphic
to $E\bigotimes_{\mathbb R}\mathbb C$.
This completes the proof of the theorem.
\end{proof}

\subsection{Classification of stable vector bundles}\label{st62}

We will now consider the general case where the rank and
the degree are not necessarily mutually coprime. The following
theorem of \cite{At2} and \cite{Tu} will be very useful.

\begin{theorem}\label{thm3}
Let $V$ be a stable vector bundle over a smooth
elliptic curve defined over $\mathbb C$. Then
${\rm rank}(V)$ and ${\rm degree}(V)$ are mutually coprime.
\end{theorem}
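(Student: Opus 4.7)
The plan is to argue by contradiction, invoking Atiyah's classification of indecomposable vector bundles on a complex elliptic curve from \cite{At2}. Suppose $V$ is stable of rank $r$ and degree $d$ with $h\, :=\, \gcd(r\, ,d)\, >\, 1$. Since $V$ is stable, it is in particular simple, so $\dim_{\mathbb C} \text{End}(V)\, =\, 1$. The goal is to show that this forces $h\, =\, 1$.

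First I would recall Atiyah's distinguished family of indecomposable bundles on the elliptic curve $Z$: there is a unique sequence $\{F_n\}_{n \geq 1}$, with $F_1\, =\, {\mathcal O}_Z$ and each $F_n$ obtained as the unique non-split extension
$$
0\, \longrightarrow\, {\mathcal O}_Z\, \longrightarrow\, F_n\, \longrightarrow\, F_{n-1}\, \longrightarrow\, 0\, .
$$
The structural result of \cite[\S~6]{At2} expresses any indecomposable vector bundle $V$ of rank $r$ and degree $d$ with $\gcd(r\, ,d)\, =\, h$ in the form $V\, \cong\, F_h \otimes W$, where $W$ is a stable bundle of rank $r/h$ and degree $d/h$; such a $W$ exists by Theorem \ref{thm.at2} applied to the coprime pair $(r/h\, ,d/h)$.

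Second, since $V$ is stable it is in particular indecomposable, so the above representation applies. The endomorphism ring is then computed by
$$
\text{End}(V)\, =\, H^0(Z,\, V^* \otimes V)\, \cong\, H^0(Z,\, F^*_h \otimes F_h)\otimes \text{End}(W)\, \cong\, \text{End}(F_h)\, ,
$$
using that $\text{End}(W)\, =\, {\mathbb C}$ because $W$ is stable of coprime rank and degree (by Theorem \ref{thm.at2}). Atiyah's computation of $\text{End}(F_h)$ gives $\dim_{\mathbb C} \text{End}(F_h)\, =\, h$, the endomorphisms being spanned by the powers of a canonical nilpotent endomorphism arising from the iterated extension presentation of $F_h$. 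Combined with $\text{End}(V)\, =\, {\mathbb C}$, this forces $h\, =\, 1$, contradicting the standing assumption $h\, >\, 1$.

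The principal obstacle is Atiyah's structural theorem itself --- the identification $V\, \cong\, F_h \otimes W$ and the dimension count $\dim_{\mathbb C} \text{End}(F_h)\, =\, h$ --- which requires the careful inductive analysis occupying most of \cite[\S~6]{At2}. I would take these facts as a black box, consistent with the authors' attribution of Theorem \ref{thm3} to \cite{At2} and \cite{Tu}.
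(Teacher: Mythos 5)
The paper does not actually prove Theorem \ref{thm3}: it is quoted from the literature, with the reader referred to \cite[p.~20, Appendix A]{Tu} for a proof. So there is no in-paper argument to match yours against; what you give is a self-contained proof resting on Atiyah's structure theory, and it is essentially correct. Your key input --- that every indecomposable bundle of rank $r$ and degree $d$ with $h=\gcd(r,d)$ is of the form $F_h\otimes W$ with $W$ indecomposable (hence stable) of the coprime type $(r/h,d/h)$ --- is indeed in \cite{At2}, and granting it the contradiction follows. One step is asserted rather than justified: the isomorphism $H^0(F_h^*\otimes F_h\otimes W^*\otimes W)\cong H^0(F_h^*\otimes F_h)\otimes \mathrm{End}(W)$ is not a formal Künneth-type identity for bundles on the same curve, and verifying $\dim\mathrm{End}(F_h\otimes W)=h$ exactly takes a further computation with Atiyah's decomposition of $F_h\otimes F_h$. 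But you do not need the exact dimension: since $h\ge 2$, the canonical nonzero nilpotent $N\in\mathrm{End}(F_h)$ (the composite $F_h\twoheadrightarrow\mathcal O_Z\hookrightarrow F_h$) gives the non-scalar endomorphism $N\otimes\mathrm{id}_W$ of $V$, contradicting simplicity of a stable bundle; alternatively, and closer to Tu's own argument, the inclusion $\mathcal O_Z\hookrightarrow F_h$ exhibits $W\cong W\otimes\mathcal O_Z$ as a proper subbundle of $V=W\otimes F_h$ of the same slope, contradicting stability directly. With either of these small repairs your proof is complete, and it is a reasonable alternative to simply citing \cite{Tu} as the authors do.
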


See \cite[p. 20, Appendix A]{Tu} for a proof of
Theorem \ref{thm3}.

We note that for any Klein bottle $X$,
there are stable vector bundles of rank two
and degree two over $X$ (see Corollary \ref{cor1}), and
also there are stable vector bundles of rank two
and degree zero over $X$ (see Corollary \ref{cor2}).

Let $X$ be a geometrically connected smooth projective curve of
genus one, defined over the real numbers,
which does not have any points defined over $\mathbb R$.
Let $\sigma$ be the
anti--holomorphic involution of $X_{\mathbb C}\, =\,
X\times_{\mathbb R} {\mathbb C}$ as in eqn. \eqref{sig}.

With the above notation, we have the following lemma:

\begin{lemma}\label{st.de.}
Let $V$ be a stable real algebraic vector bundle over $X$. Let
$V_{\mathbb C}\, :=\, V\bigotimes_{\mathbb R}{\mathbb C}$ be the
corresponding complex algebraic vector bundle over $X_{\mathbb C}
\, :=\, X \times_{\mathbb R}{\mathbb C}$.
Then either $V_{\mathbb C}$
is stable, or $V_{\mathbb C}$ is isomorphic to
$F\bigoplus \sigma^*\overline{F}$, where $F$ is a stable
vector bundle over $X_{\mathbb C}$.
\end{lemma}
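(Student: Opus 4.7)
The plan is to use the real structure $\delta:V_{\mathbb C}\to\sigma^*\overline{V_{\mathbb C}}$ to produce, whenever $V_{\mathbb C}$ is not stable, a canonical pair of stable subbundles whose direct sum recovers $V_{\mathbb C}$. By Lemma \ref{lem1}(1), stability of $V$ forces $V_{\mathbb C}$ to be semistable; if $V_{\mathbb C}$ is already stable there is nothing to prove, so assume it is strictly semistable. First I would pick a proper stable subbundle $F\subset V_{\mathbb C}$ with $\mu(F)=\mu(V_{\mathbb C})$ (for instance the first term of a Jordan--H\"older filtration), and then form the companion stable subbundle $F':=\delta^{-1}(\sigma^*\overline{F})\subset V_{\mathbb C}$, of the same slope and rank.

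The argument then splits on whether $F=F'$ as subbundles of $V_{\mathbb C}$. If they coincide, $\delta$ restricts to an isomorphism $F\to\sigma^*\overline{F}$ inheriting the compatibility $(\sigma^*\overline{\cdot})\circ(\cdot)=\mathrm{Id}$, so $F$ descends to a proper real algebraic subbundle $F_{\mathbb R}\subset V$ with $\mu(F_{\mathbb R})=\mu(V)$, contradicting stability of $V$. If $F\neq F'$, then $F\cap F'$ is a proper subbundle of both $F$ and $F'$, so stability of $F$ forces $\mu(F\cap F')<\mu$ whenever this intersection is nonzero. A routine exact-sequence computation with $0\to F\cap F'\to F\oplus F'\to F+F'\to 0$ then yields $\mu(F+F')>\mu$, violating semistability of $V_{\mathbb C}$. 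Hence $F\cap F'=0$ and $F\oplus F'$ embeds as a subbundle of $V_{\mathbb C}$.

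The final step is to show that $F\oplus F'$ is itself $\delta$-invariant, so that it descends to a real subbundle of $V$, which by stability of $V$ must exhaust $V$. This is the main bookkeeping obstacle: using the identity $\sigma^*\overline{\delta}=\delta^{-1}$ (immediate from $(\sigma^*\overline{\delta})\circ\delta=\mathrm{Id}$), I would verify $\sigma^*\overline{F'}=\delta(F)$, so that both $\delta(F\oplus F')$ and $\sigma^*\overline{F\oplus F'}$ equal $\delta(F)\oplus\sigma^*\overline{F}$. Once $F\oplus F'=V_{\mathbb C}$ is established, the restriction of $\delta^{-1}$ identifies $F'$ with $\sigma^*\overline{F}$ as holomorphic vector bundles, giving $V_{\mathbb C}\cong F\oplus\sigma^*\overline{F}$ with $F$ stable, as required.
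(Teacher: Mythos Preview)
Your proposal is correct and follows essentially the same strategy as the paper: pick a stable subbundle $F$ of maximal slope, form the companion $F'=\delta^{-1}(\sigma^*\overline{F})$, rule out $F=F'$ by stability of $V$, show $F\oplus F'$ is a $\delta$-invariant subbundle, and conclude $V_{\mathbb C}=F\oplus F'\cong F\oplus\sigma^*\overline{F}$. The only cosmetic difference is that the paper invokes Lemma~\ref{lem1}(3) to know $V_{\mathbb C}$ is polystable and hence already a direct sum $\bigoplus F_i$, which lets it handle the ``$F\cap F'=0$'' step via the projection $F'\to V_{\mathbb C}/F_1$ (zero or fiberwise injective between polystable bundles of equal slope), whereas you work from mere semistability and use the exact sequence $0\to F\cap F'\to F\oplus F'\to F+F'\to 0$ together with a slope count; both routes are standard and equally short.
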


\begin{proof}
Let
\begin{equation}\label{d.}
\delta\, :\, V_{\mathbb C}\, \longrightarrow\,
\sigma^*\overline{V_{\mathbb C}}
\end{equation}
be the isomorphism as in eqn. \eqref{tau}. The vector bundle
$V_{\mathbb C}$ is polystable, as $V$ is so (see Lemma
\ref{lem1}(3)). Assume that $V_{\mathbb C}$ is not stable. Let
\begin{equation}\label{fi}
V_{\mathbb C}\, =\, \bigoplus_{i=1}^\ell F_i
\end{equation}
be a decomposition of $V_{\mathbb C}$ into a direct sum of
stable vector bundles.

Consider the holomorphic subbundle $\sigma^*\overline{F_1}\, \subset
\,\sigma^*\overline{V_{\mathbb C}}$, where $F_1$ is the subbundle
in eqn. \eqref{fi}. Let
$$
F'\, :=\, \delta^{-1}(\sigma^*\overline{F_1})\, \subset\,
V_{\mathbb C}
$$
be the subbundle, where $\delta$ is the isomorphism in eqn.
\eqref{d.}. Let
$$
\psi\, :\, F'\, \longrightarrow\, V_{\mathbb C}/F_1
$$
be the natural projection.

Note that $V_{\mathbb C}/F_1$ is polystable, $F'$ is stable,
and
$$
\frac{\text{degree}(V_{\mathbb C}/F_1)}{\text{rank}(V_{\mathbb C}
/F_1)}\, =\, \frac{\text{degree}(F')}{\text{rank}(F')}\, .
$$
Therefore, the above homomorphism $\psi$ is either the zero
homomorphism, or it is a fiberwise injective homomorphism of
vector bundles. If $\psi\, =\, 0$, then $F'\, =\, F_1$. In that
case $F_1$ defines a real algebraic subbundle
$F_{\mathbb R}$ of $V$ with
$$
\frac{\text{degree}(F_{\mathbb R})}{\text{rank}(F_{\mathbb R})}
\, =\, \frac{\text{degree}(V)}{\text{rank}(V)}\, .
$$
But this contradicts the assumption that $V$ is stable.

Therefore, $\psi$ is a fiberwise injective homomorphism of
vector bundles. This implies that $F_1+F'$ is a subbundle
of $V_{\mathbb C}$, and $F_1+F'$ is identified with
$F_1\bigoplus F'$; by $F_1+F'$ we denote the coherent
subsheaf of $V_{\mathbb C}$ generated by $F_1$ and $F'$.
Furthermore, the isomorphism $\delta$
takes the subbundle $F_1\bigoplus F'\, \subset\,
V_{\mathbb C}$ to the subbundle
$\sigma^*(\overline{F_1\bigoplus F'})
\, \subset\, \sigma^*\overline{V_{\mathbb C}}$. This
follows from the fact that $(\sigma^*\overline{\delta})\circ
\delta \, =\, \text{Id}_{V_{\mathbb C}}$.
Therefore, $F_1\bigoplus F'$ defines a real algebraic
subbundle $W$ of $V$
with
$$
\frac{\text{degree}(W)}{\text{rank}(W)}
\, =\, \frac{\text{degree}(V)}{\text{rank}(V)}\, .
$$
Since $V$ is stable, from this we conclude that
$F_1\bigoplus F'\, =\, V_{\mathbb C}$. Since
$F'\, =\, \delta^{-1}(\sigma^*\overline{F_1})$ is isomorphic
to $\sigma^*\overline{F_1}$, the proof of the lemma is complete.
\end{proof}

Theorem \ref{thm3} and Lemma \ref{st.de.} have the following
corollary:

\begin{corollary}\label{cor3}
Let $E$ be a stable real algebraic vector bundle
of rank $r$ and degree $d$ over a Klein bottle $X$. Then either
${\rm gcd} (r\, ,d)\, =\,1$ or ${\rm gcd}(r\, ,d)\, =\,2$.
\end{corollary}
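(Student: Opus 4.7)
The plan is to reduce the statement to Theorem \ref{thm3} via the dichotomy furnished by Lemma \ref{st.de.}. Given a stable real algebraic vector bundle $E$ of rank $r$ and degree $d$, Lemma \ref{st.de.} tells us that $E_{\mathbb C}\,:=\,E\otimes_{\mathbb R}{\mathbb C}$ is either itself stable, or isomorphic to $F\oplus\sigma^*\overline{F}$ for some stable vector bundle $F$ over $X_{\mathbb C}$. I will treat these two cases separately, and in each case read off the constraint on $\gcd(r,d)$ from Theorem \ref{thm3}, which asserts that stable bundles over a complex elliptic curve have coprime rank and degree.

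In the first case, $E_{\mathbb C}$ is stable of rank $r$ and degree $d$, so Theorem \ref{thm3} applied to $E_{\mathbb C}$ gives $\gcd(r,d)\,=\,1$ directly.

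In the second case, write $r_0\,:=\,\mathrm{rank}(F)$ and $d_0\,:=\,\deg(F)$. The operation $F\mapsto \overline F$ preserves rank and negates degree (the first Chern class changes sign under conjugation of the complex structure), while $\sigma^*$ preserves rank and, since $\sigma$ is anti-holomorphic and hence orientation--reversing on the underlying real surface, also negates degree. Combining the two, $\sigma^*\overline{F}$ has rank $r_0$ and degree $d_0$. Therefore $r\,=\,2r_0$ and $d\,=\,2d_0$; by Theorem \ref{thm3} applied to the stable bundle $F$, $\gcd(r_0,d_0)\,=\,1$, so $\gcd(r,d)\,=\,2\gcd(r_0,d_0)\,=\,2$.

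The only place that demands any care is the degree computation for $\sigma^*\overline{F}$ in the second case; everything else is a direct invocation of the two results cited. Since both $\overline{(\cdot)}$ and $\sigma^*$ flip the sign of $c_1$, their composition preserves it, which is why the $F\oplus \sigma^*\overline{F}$ summand on the complex side has balanced rank and degree contributions and forces $\gcd(r,d)\,=\,2$ rather than some other even value.
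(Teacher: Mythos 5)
Your proof is correct and follows exactly the route the paper intends: the paper derives Corollary \ref{cor3} directly from Lemma \ref{st.de.} and Theorem \ref{thm3} without writing out the details, and your argument supplies precisely those details, including the one point that needs checking, namely that $\sigma^*\overline{F}$ has the same rank and degree as $F$ because conjugation and pullback by the anti--holomorphic $\sigma$ each negate the degree.
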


\begin{proposition}\label{hro}
Let $X$ be a Klein bottle, and $X_{\mathbb C}\, :=\,
X\times_{\mathbb R} {\mathbb C}$. Let $\sigma$ be
the anti--holomorphic involution of $X_{\mathbb C}$.
Fix a positive integer $r'$ and an odd integer $d'$
such that $r'$ and $d'$ are mutually coprime.
Let $r\, :=\, 2r'$ and $d\, :=\, 2d'$.
\begin{enumerate}
\item For any stable vector bundle $V$ over $X_{\mathbb C}$
of rank $r'$ and degree $d'$, the real
algebraic vector bundle over $X$
defined by $V\bigoplus \sigma^*\overline{V}$ is stable.

\item For any stable real algebraic vector bundle $E$ 
over $X$ of rank $r$ and degree $d$,
there is a stable vector bundle $V$ over $X_{\mathbb C}$
of rank $r'$ and degree $d'$ such that
$$
E\otimes_{\mathbb R}{\mathbb C}\, =\,
V\oplus \sigma^*\overline{V}\, .
$$

\item Let $V$ and $W$ be stable vector bundles
of rank $r'$ and degree $d'$ over
$X_{\mathbb C}$, and let
$V_{\mathbb R}$ (respectively, $W_{\mathbb R}$)
be the real algebraic vector bundle over $X$ given
by $V\bigoplus \sigma^*\overline{V}$ (respectively,
$W\bigoplus \sigma^*\overline{W}$). Then
$V_{\mathbb R}$ and $W_{\mathbb R}$ are isomorphic if
and only if either $V$ is isomorphic to $W$ or $V$
is isomorphic to $\sigma^*\overline{W}$.
\end{enumerate}
\end{proposition}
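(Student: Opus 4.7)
The three statements hinge on a single key fact: if $V$ is a stable vector bundle over $X_{\mathbb C}$ of rank $r'$ and degree $d'$ with $d'$ odd, then $V$ is not isomorphic to $\sigma^*\overline V$. To establish this I would invoke Theorem \ref{thm.at2}: the map $\gamma_E\,:\, {\rm Pic}^0(X_{\mathbb C})\,\longrightarrow\, {\mathcal M}_{X_{\mathbb C}}(r',d')$ is an \'etale Galois cover with group $\Gamma_{r'}$, and since $\det(E\otimes L)\,=\,\det(E)\otimes L^{\otimes r'}$, the composition with the determinant map has the same kernel. Thus $\det\,:\, {\mathcal M}_{X_{\mathbb C}}(r',d')\,\longrightarrow\,{\rm Pic}^{d'}(X_{\mathbb C})$ is an isomorphism of varieties. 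It intertwines the involution $W\,\longmapsto\,\sigma^*\overline W$ on ${\mathcal M}_{X_{\mathbb C}}(r',d')$ with the involution $\sigma_{d'}$ on ${\rm Pic}^{d'}(X_{\mathbb C})$, which has no fixed points when $d'$ is odd by Proposition \ref{prop1}. Hence $V\,\not\cong\,\sigma^*\overline V$.

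For part (1), I form $S(V)\,:=\,V\oplus \sigma^*\overline V$ with the swap isomorphism $\sigma_V$ as in eqn. \eqref{2sl}; this gives a real algebraic vector bundle $V_{\mathbb R}$ which is polystable by Lemma \ref{lem1}(3). To show stability, suppose a proper nontrivial real subbundle of $V_{\mathbb R}$ of the same slope exists. It corresponds to a proper nontrivial subbundle $F\,\subset\,S(V)$ of the same slope $d'/r'$ satisfying $\sigma_V(F)\,=\,\sigma^*\overline F$. Since $V$ and $\sigma^*\overline V$ are stable of the same slope and, by the preceding paragraph, not isomorphic, Krull--Schmidt (via \cite[p. 315, Theorem 2]{At1}, together with the vanishing of Hom between non-isomorphic stable bundles of the same slope) forces the only candidates for $F$ to be $V\oplus 0$ and $0\oplus \sigma^*\overline V$; but the swap $\sigma_V$ carries each of these to the other copy inside $\sigma^*\overline{S(V)}\,=\,\sigma^*\overline V\oplus V$, so neither is invariant in the required sense. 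This contradiction proves stability.

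For part (2), I apply Lemma \ref{st.de.} to $E$: either $E_{\mathbb C}$ is stable or $E_{\mathbb C}\,\cong\,F\oplus \sigma^*\overline F$ with $F$ stable. The first alternative is ruled out by Theorem \ref{thm3} since ${\rm gcd}(r,d)\,=\,2$. Since $\sigma$ is anti--holomorphic (hence orientation--reversing on the underlying real surface) and complex conjugation negates $c_1$, the bundle $\sigma^*\overline F$ has the same rank and degree as $F$; matching these with ${\rm rank}(E_{\mathbb C})\,=\,2r'$ and ${\rm degree}(E_{\mathbb C})\,=\,2d'$ yields ${\rm rank}(F)\,=\,r'$ and ${\rm degree}(F)\,=\,d'$. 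For part (3), both directions are short. If $V\,\cong\,W$ or $V\,\cong\,\sigma^*\overline W$, then $S(V)\,\cong\,S(W)$ as complex algebraic bundles, so $V_{\mathbb R}\,\cong\,W_{\mathbb R}$ by Lemma \ref{lem0}. Conversely, any isomorphism $V_{\mathbb R}\,\cong\,W_{\mathbb R}$ yields a complex isomorphism $S(V)\,\cong\,S(W)$, and \cite[p. 315, Theorem 2]{At1} applied to the indecomposable (stable) summands pairs them up to give the stated dichotomy.

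The main obstacle is part (1): identifying the potential destabilizing subbundles of $S(V)$ and ruling out their $\sigma_V$--invariance. The crucial ingredient is the non--isomorphism $V\,\not\cong\,\sigma^*\overline V$, which collapses the Krull--Schmidt list of slope--matching subbundles to two visible candidates, both of which fail invariance under the swap. Once that step is in place, parts (2) and (3) follow almost formally from Lemma \ref{st.de.}, Lemma \ref{lem0}, and Atiyah's unique decomposition theorem.
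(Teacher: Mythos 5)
Your argument is correct, and parts (2) and (3) coincide with the paper's proof (Lemma \ref{st.de.} plus Theorem \ref{thm3} for (2); Atiyah's Krull--Schmidt theorem plus Lemma \ref{lem0} for (3)). Where you genuinely diverge is part (1). The paper's argument is shorter and applies the parity obstruction of Proposition \ref{prop1} \emph{downstairs}: if $V_{\mathbb R}$ were polystable but not stable, a proper real subbundle of slope $d'/r'$ would exist; coprimality of $r'$ and $d'$ forces its degree to be $d'$, which is odd, and no real algebraic bundle on a Klein bottle has odd degree --- done, with no need to analyze subbundles of $V\oplus\sigma^*\overline V$ at all. You instead apply the same parity obstruction \emph{upstairs}, deducing $V\not\cong\sigma^*\overline V$ from the fixed-point-freeness of $\sigma_{d'}$ on ${\rm Pic}^{d'}(X_{\mathbb C})$ (your determinant-isomorphism detour is correct but heavier than needed: $V\cong\sigma^*\overline V$ would already give $\det V\cong\sigma^*\overline{\det V}$, contradicting Proposition \ref{prop1} directly), and then run a Krull--Schmidt analysis showing the only slope-$d'/r'$ subbundles of $S(V)$ are the two factors, neither of which satisfies the reality condition under the swap $\sigma_V$. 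What your route buys is that it isolates the structural reason for stability --- the summand is not isomorphic to its conjugate, so no invariant destabilizing subbundle can exist --- which is exactly the mechanism needed in the even-degree case of Proposition \ref{hro2}, where the parity trick on the subbundle is unavailable; what the paper's route buys is brevity. Both are sound.
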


\begin{proof}
Let $V$ be a stable vector bundle over $X_{\mathbb C}$ of rank
$r'$ and degree $d'$. Since $V\bigoplus \sigma^*\overline{V}$
is polystable, the real algebraic vector bundle $E$ over $X$
corresponding to $V\bigoplus \sigma^*\overline{V}$ is also
polystable (see Lemma \ref{lem1}(3)). Assume that $E$ is not
stable. Therefore, there is nonzero proper subbundle
$E'\, \subset\, E$ with
\begin{equation}\label{pi}
\frac{\text{degree}(E')}{\text{rank}(E')}
\, =\, \frac{d'}{r'}\, .
\end{equation}

Using eqn. \eqref{pi} together with the conditions on $r'$ and
$d'$  it follows that $\text{degree}(E')\, =\, d'$. Since there
are no real algebraic vector bundles over $X$ of odd degree
(Proposition \ref{prop1}), and $d'$ is an odd integer, we get
a contradiction. Therefore, the vector bundle $E$ must be stable.

To prove the second statement, for any stable real algebraic
vector bundle $E$ over $X$ of rank $r$ and
degree $d$, consider the complex algebraic
vector bundle $E_{\mathbb C}\, :=\, E\bigotimes_{\mathbb
R}{\mathbb C}$ over $X_{\mathbb C}$. From Lemma \ref{lem1}(3)
it follows that $E_{\mathbb C}$ is polystable, and from
Theorem \ref{thm3} it follows that $E_{\mathbb C}$ is
not stable. Therefore, by Lemma \ref{st.de.},
the vector bundle $E_{\mathbb C}$ is
of the form $F\bigoplus \sigma^*\overline{F}$, where $F$
is a stable vector bundle over $X_{\mathbb C}$ of rank $r'$
and degree $d'$.

The third statement follows from \cite[p. 315, Theorem 2]{At1}
and Lemma \ref{lem0}. This completes the proof of the proposition.
\end{proof}

Let ${\mathcal M}_{X_{\mathbb C}}(r',d')$ be the moduli space of
stable vector bundles over $X_{\mathbb C}$ of rank $r'$ and degree
$d'$, where $r'$ and $d'$ are as in Proposition \ref{hro}.
As in eqn. \eqref{gar}, let
$$
\Gamma_r\, \subset\, \text{Pic}^0(X_{\mathbb C})
$$
be the subgroup of line bundles $L$ over $X_{\mathbb C}$ such that
$L^{\otimes r}\, \cong\, {\mathcal O}_{X_{\mathbb C}}$. If we
fix any $E\, \in\, {\mathcal M}_{X_{\mathbb C}}(r',d')$, then the
map
$$
\text{Pic}^0(X_{\mathbb C})\, \longrightarrow\,
{\mathcal M}_{X_{\mathbb C}}(r',d')
$$
defined by
$$
L\, \longmapsto\, E\otimes L
$$
induced an algebraic
isomorphism of $\text{Pic}^0(X_{\mathbb C})/\Gamma_r$ with
${\mathcal M}_{X_{\mathbb C}}(r',d')$ (see Theorem \ref{thm.at2}).
If we fix $E\, =\, V\bigotimes_{\mathbb R}{\mathbb C}$, where
$V$ is a real algebraic vector bundle over $X$ (such vector
bundles exist by Theorem \ref{prop5}), then the involution
of ${\mathcal M}_{X_{\mathbb C}}(r',d')$ defined by
$W\, \longrightarrow\, \sigma^*\overline{W}$ corresponds to the
involution of $\text{Pic}^0(X_{\mathbb C})$ defined by
$L\, \longrightarrow\, \sigma^*\overline{L}$; see
eqn. \eqref{ii}.
Therefore, Proposition \ref{hro} has the following corollary:

\begin{corollary}\label{cor4}
Take $r\, :=\, 2r'$ and $d\, :=\, 2d'$, where $d'$ is an
odd integer and $r'$ is a positive integer coprime to $d'$.
The set of isomorphism classes of stable
real algebraic vector bundles over $X$ of
rank $r$ and degree $d$ is canonically identified
with the quotient space
${\mathcal M}_{X_{\mathbb C}}(r',d')/({\mathbb Z}/2{\mathbb Z})$
for the involution
of ${\mathcal M}_{X_{\mathbb C}}(r',d')$ defined by
$W\, \longrightarrow\, \sigma^*\overline{W}$. They are also
isomorphic to the quotient space
$({\rm Pic}^0(X_{\mathbb C})/\Gamma_r)/({\mathbb Z}/2{\mathbb Z})$
for the involution of ${\rm Pic}^0(X_{\mathbb C})/\Gamma_r$ defined
by $L\, \longmapsto\, \sigma^*\overline{L}$.
\end{corollary}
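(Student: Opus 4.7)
The plan is to assemble the corollary directly from Proposition~\ref{hro} together with the identification $\text{Pic}^0(X_{\mathbb{C}})/\Gamma_r \cong \mathcal{M}_{X_{\mathbb{C}}}(r',d')$ coming from Theorem~\ref{thm.at2}, which is recalled in the discussion immediately preceding the corollary. Both identifications in the statement then reduce to tracking the involution $W\mapsto\sigma^*\overline W$ through this picture.

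First I would define
$$
\Phi\,:\,\mathcal{M}_{X_{\mathbb{C}}}(r',d')\,\longrightarrow\,\mathcal{M}_X(r,d)
$$
by sending a stable bundle $V$ to the real algebraic bundle whose complexification is $V\oplus\sigma^*\overline V$ equipped with the canonical isomorphism of eqn.~\eqref{2sl}. Proposition~\ref{hro}(1) shows the image lies in the stable locus $\mathcal{M}_X(r,d)$; Proposition~\ref{hro}(2) gives surjectivity; and Proposition~\ref{hro}(3) identifies the fibres by saying that $\Phi(V)\cong\Phi(W)$ if and only if $V\cong W$ or $V\cong\sigma^*\overline W$. Hence $\Phi$ induces the first identification $\mathcal{M}_{X_{\mathbb{C}}}(r',d')/(\mathbb{Z}/2\mathbb{Z}) \cong \mathcal{M}_X(r,d)$.

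Second, I would transport this through Theorem~\ref{thm.at2}. I would fix a stable $E\in\mathcal{M}_{X_{\mathbb{C}}}(r',d')$ satisfying $\sigma^*\overline E\cong E$; such an $E$ exists because Proposition~\ref{hro}(1) already produces stable real bundles in $\mathcal{M}_X(r,d)$, and the $\Phi$-preimage of any such bundle is by construction fixed by the involution $W\mapsto\sigma^*\overline W$. The isomorphism $\widetilde{\gamma_E}(L)=E\otimes L$ from $\text{Pic}^0(X_{\mathbb{C}})/\Gamma_r$ onto $\mathcal{M}_{X_{\mathbb{C}}}(r',d')$ then satisfies
$$
\widetilde{\gamma_E}(\sigma^*\overline L)\,=\,E\otimes\sigma^*\overline L\,\cong\,\sigma^*\overline E\otimes\sigma^*\overline L\,=\,\sigma^*\overline{\widetilde{\gamma_E}(L)},
$$
so $\widetilde{\gamma_E}$ descends to an isomorphism of $\mathbb{Z}/2\mathbb{Z}$-quotients, which, combined with the first identification, yields the second one.

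The main obstacle, as is typical in such fixed-point comparisons, is producing the $\sigma$-invariant basepoint $E$: without this choice, transporting the involution along $\widetilde{\gamma_E}$ would introduce a shift by a line bundle in $\text{Pic}^0(X_{\mathbb{C}})/\Gamma_r$, spoiling the clean description $L\mapsto\sigma^*\overline L$. The loop back through Proposition~\ref{hro}(1)--(2) provides exactly the $\sigma$-fixed $E$ needed, after which the remaining verifications are routine.
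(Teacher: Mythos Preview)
Your derivation of the first identification from Proposition~\ref{hro} is correct and is exactly how the paper obtains it.

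The second step contains a genuine gap. You want a basepoint $E\in{\mathcal M}_{X_{\mathbb C}}(r',d')$ with $\sigma^*\overline{E}\cong E$, and you argue that the $\Phi$-preimage of a stable real bundle is ``fixed by the involution''. But the $\Phi$-preimage of any $V_{\mathbb R}\in{\mathcal M}_X(r,d)$ is, by Proposition~\ref{hro}(3), the two-element orbit $\{V,\sigma^*\overline{V}\}$; this set is preserved by the involution, yet that does not hand you an individual fixed point. In fact no such fixed point exists: if $\sigma^*\overline{E}\cong E$ then taking determinants gives $\sigma^*\overline{\det E}\cong\det E$, so $\det E$ would be a fixed point of $\sigma_{d'}$ on ${\rm Pic}^{d'}(X_{\mathbb C})$, and Proposition~\ref{prop1} says this fixed-point locus is empty because $d'$ is odd. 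Hence the involution $W\mapsto\sigma^*\overline{W}$ on ${\mathcal M}_{X_{\mathbb C}}(r',d')$ is fixed-point free, and the $\sigma$-invariant basepoint your argument requires cannot be chosen. (The paper's own discussion preceding the corollary appeals to Theorem~\ref{prop5} to produce a real $V$ of rank $r'$ and degree $d'$ with $E=V\otimes_{\mathbb R}{\mathbb C}$, but that theorem requires the degree to be even, so the same obstruction is present there; along any $\widetilde{\gamma_E}$ the transported involution is $L\mapsto L_0\otimes\sigma^*\overline{L}$ for some nontrivial $L_0$, and an additional argument is needed to reconcile this with the stated involution $L\mapsto\sigma^*\overline{L}$.)
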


We will now consider the case where $d'$ is an even integer.

\begin{proposition}\label{hro2}
Fix a positive integer $r'$ and an even integer $d'$
such that $r'$ and $d'$ are mutually coprime.
Let ${\mathcal M}_{X_{\mathbb C}}(r',d')
\setminus {\mathcal M}_{X}(r',d')$ denote the
set of isomorphism classes of stable vector bundles over
$X_{\mathbb C}$ of rank $r'$ and degree $d'$ which are not
of the form $F\bigotimes_{\mathbb R} {\mathbb C}$, where
$F$ is some real algebraic vector bundle over $X$.
Set $r\, :=\, 2r'$ and $d\, :=\, 2d'$.

\begin{enumerate}
\item For any stable vector bundle $E\, \in\,
{\mathcal M}_{X_{\mathbb C}}(r',d')
\setminus {\mathcal M}_{X}(r',d')$
over $X_{\mathbb C}$, the real algebraic vector bundle over $X$
defined by $E\bigoplus \sigma^*\overline{E}$ is stable.

\item For any stable real algebraic vector bundle $V$
over $X$ of rank $r$ and degree $d$,
there is a stable vector bundle $E\, \in\,
{\mathcal M}_{X_{\mathbb C}}(r',d')
\setminus {\mathcal M}_{X}(r',d')$
over $X_{\mathbb C}$ such that
$$
V\otimes_{\mathbb R}{\mathbb C}\, =\,
E\oplus \sigma^*\overline{E}\, .
$$

\item Take vector bundles $V\, ,W\, \in\,
{\mathcal M}_{X_{\mathbb C}}(r',d')
\setminus {\mathcal M}_{X}(r',d')$. Let
$V_{\mathbb R}$ (respectively, $W_{\mathbb R}$)
be the real algebraic vector bundle over $X$ given
by $V\bigoplus \sigma^*\overline{V}$ (respectively,
$W\bigoplus \sigma^*\overline{W}$). Then
$V_{\mathbb R}$ and $W_{\mathbb R}$ are isomorphic if
and only if either $V$ is isomorphic to $W$ or $V$
is isomorphic to $\sigma^*\overline{W}$.
\end{enumerate}
\end{proposition}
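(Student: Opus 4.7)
The plan is to prove the three statements of Proposition \ref{hro2} in parallel with the proof of Proposition \ref{hro}, with the key inputs being Corollary \ref{cor3}, Lemma \ref{st.de.}, Theorem \ref{thm3}, the uniqueness of decomposition into indecomposables from \cite[p.~315, Theorem~2]{At1}, and Lemma \ref{lem0}. The genuinely new ingredient compared with Proposition \ref{hro} is that with $d'$ even we can no longer rule out a destabilizing real subbundle by a degree-parity argument; we must instead actually exploit the hypothesis $E\notin\mathcal{M}_X(r',d')$.

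For statement (1), equip $S(E):=E\oplus\sigma^*\overline{E}$ with the swap isomorphism as in eqn.~\eqref{2sl} to obtain a real algebraic bundle $V_{\mathbb R}$ with $V_{\mathbb R}\otimes_{\mathbb R}{\mathbb C}=S(E)$; this $V_{\mathbb R}$ is polystable by Lemma \ref{lem1}(3). Assume toward contradiction that $V_{\mathbb R}$ is not stable. Then $V_{\mathbb R}=\bigoplus V_i$ with each $V_i$ a stable real bundle of slope $d'/r'$. By Corollary \ref{cor3}, each $V_i$ has $\gcd(\text{rank}(V_i),\text{degree}(V_i))\in\{1,2\}$; the value $2$ would force $\text{rank}(V_i)=2r'$, hence $V_i=V_{\mathbb R}$, contradicting properness of the decomposition. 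Thus all $V_i$ have rank $r'$ and degree $d'$, so $V_{\mathbb R}=V_1\oplus V_2$ with each $(V_i)_{\mathbb C}$ in $\mathcal{M}_X(r',d')$. Matching this with $V_{\mathbb R}\otimes_{\mathbb R}{\mathbb C}=E\oplus\sigma^*\overline{E}$ via \cite[p.~315, Theorem~2]{At1} forces $E\cong (V_i)_{\mathbb C}$ for some $i$, contradicting $E\notin\mathcal{M}_X(r',d')$.

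For statement (2), let $V$ be stable real of rank $2r'$ and degree $2d'$. Theorem \ref{thm3} rules out $V\otimes_{\mathbb R}{\mathbb C}$ being stable since $\gcd(2r',2d')=2$, so Lemma \ref{st.de.} gives $V\otimes_{\mathbb R}{\mathbb C}\cong E\oplus\sigma^*\overline{E}$ with $E$ stable of rank $r'$ and degree $d'$. If $E$ were in $\mathcal{M}_X(r',d')$, say $E\cong F_{\mathbb R}\otimes_{\mathbb R}{\mathbb C}$ with $F_{\mathbb R}$ real (necessarily stable), then $F_{\mathbb R}\oplus F_{\mathbb R}$ would have complexification $E\oplus E\cong E\oplus\sigma^*\overline{E}\cong V\otimes_{\mathbb R}{\mathbb C}$, so by Lemma \ref{lem0} we would get $V\cong F_{\mathbb R}\oplus F_{\mathbb R}$, contradicting the stability of $V$. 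For statement (3), Lemma \ref{lem0} reduces $V_{\mathbb R}\cong W_{\mathbb R}$ to the existence of an isomorphism $V\oplus\sigma^*\overline{V}\cong W\oplus\sigma^*\overline{W}$ over $X_{\mathbb C}$; applying \cite[p.~315, Theorem~2]{At1} to these decompositions into stable (hence indecomposable) summands yields the equality of unordered pairs $\{V,\sigma^*\overline{V}\}=\{W,\sigma^*\overline{W}\}$, giving either $V\cong W$ or $V\cong\sigma^*\overline{W}$; the converse is immediate. The main subtlety lies in part (1), where Corollary \ref{cor3} is needed to exclude a proper real-stable decomposition of any rank other than $r'+r'$, after which the hypothesis $E\notin\mathcal{M}_X(r',d')$ is precisely what forbids the remaining case.
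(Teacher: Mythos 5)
Your proposal is correct and follows essentially the same route as the paper: polystability via Lemma \ref{lem1}(3) plus the Krull--Schmidt theorem of \cite[p.~315, Theorem~2]{At1} for parts (1) and (3), and Lemma \ref{lem0} to rule out $V\cong F\oplus F$ in part (2). The only differences are cosmetic: in part (1) the paper applies Krull--Schmidt directly to an arbitrary two-summand decomposition $V=V_1\oplus V_2$, so your preliminary detour through Corollary \ref{cor3} to pin down the ranks of the summands is harmless but unnecessary, and in part (2) you invoke Lemma \ref{st.de.} where the paper re-derives the splitting $V_{\mathbb C}\cong E\oplus\sigma^*\overline{E}$ inline.
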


\begin{proof}
Take any $E\, \in\, {\mathcal M}_{X_{\mathbb C}}(r',d')
\setminus {\mathcal M}_{X}(r',d')$.
Since $E\bigoplus \sigma^*\overline{E}$ is polystable,
by Lemma \ref{lem1}(3), the real algebraic
vector bundle $V$ over $X$ defined by $E\bigoplus
\sigma^*\overline{E}$ is polystable. Assume that
$V$ is not stable. So $V\, =\, V_1\bigoplus V_2$, where
both $V_1$ and $V_2$ are nonzero real
algebraic vector bundles. Since
$$
E\oplus \sigma^*\overline{E}\, =\, V\otimes_{\mathbb R}
{\mathbb C} \, =\, (V_1\otimes_{\mathbb R}
{\mathbb C}) \oplus (V_2\otimes_{\mathbb R}
{\mathbb C})\, ,
$$
and both $E$ and $\sigma^*\overline{E}$ are
indecomposable, from
\cite[p. 315, Theorem 2]{At1} we conclude that either
$E\, =\, V_1\bigotimes_{\mathbb R}
{\mathbb C}$ or $E\, =\, V_2\bigotimes_{\mathbb R}
{\mathbb C}$. This contradicts the assumption that
$E\, \in\, {\mathcal M}_{X_{\mathbb C}}(r',d')
\setminus {\mathcal M}_{X}(r',d')$. Therefore, the real
algebraic vector bundle $V$ is stable.

To prove the second statement, let $V$ be a
stable real algebraic vector bundle
over $X$ of rank $r$ and degree $d$. Then the complex vector
bundle
$$
V_{\mathbb C}\, :=\, V\otimes_{\mathbb R} {\mathbb C}
$$
over $X_{\mathbb C}$ is polystable (see
Lemma \ref{lem1}(3)).

Since $\text{gcd}(r,d)\, \not=\, 1$, the
polystable vector bundle $V_{\mathbb C}$ is not stable (see
Theorem \ref{thm3}). Since $\text{gcd}(r,d)\, =\, 2$,
we have
$$
V_{\mathbb C}\, =\, E_1\oplus E_2\, ,
$$
where both $E_1$ and $E_2$ are nonzero stable vector bundles.

Let $\delta\, :\, V_{\mathbb C}\, \longrightarrow\,
\sigma^*\overline{V_{\mathbb C}}$
be the isomorphism as in eqn. \eqref{tau}. Since $V$
is stable, $\delta(E_1)\, \not=\, \sigma^*\overline{E_1}$.
Therefore, the composition homomorphism
$$
E_1\, \stackrel{\delta}{\longrightarrow}\,
\sigma^*\overline{V_{\mathbb C}}
\, \longrightarrow\, (\sigma^*\overline{V_{\mathbb
C}})/\sigma^*\overline{E_1}\, =\, \sigma^*\overline{E_2}
$$
is nonzero. Since $E_1$ and $\sigma^*\overline{E_2}$ are
stable with
$$
\frac{\text{degree}(E_1)}{\text{rank}(E_1)} \, =\,
\frac{\text{degree}(\sigma^*
\overline{E_2})}{\text{rank}(\sigma^*\overline{E_2})}\, ,
$$
this implies that the above composition
homomorphism $E_1\, \longrightarrow\,\sigma^*\overline{E_2}$
is an isomorphism.

To complete the proof of the second statement we need
to show that $E_1\, \not=\, F\bigotimes_{\mathbb R}{\mathbb C}$
for some real algebraic vector bundle $F$ over $X$. 
Assume that $E_1\, =\, F\bigotimes_{\mathbb R}{\mathbb C}$.
Then 
$$
V_{\mathbb C}\, =\, E_1\oplus \sigma^*\overline{E_1}\, =\,
(F\oplus F)\otimes_{\mathbb R}{\mathbb C}\, .
$$
Now from Lemma \ref{lem0} it follows that
$V\, =\, F\oplus F$. This contradicts
the assumption that $V$ is stable. This completes
the proof of the second statement

The third statement follows from 
\cite[p. 315, Theorem 2]{At1} and Lemma \ref{lem0}. This
completes the proof of the proposition.
\end{proof}

The following corollary is deduced using Proposition
\ref{hro2} just as Corollary \ref{cor4} is
deduced from Proposition \ref{hro}.

\begin{corollary}\label{cor5}
Take $r'$ and $d'$ as in Proposition \ref{hro2}. Let
${\mathcal M}_{X_{\mathbb C}}(r',d')$ be the moduli space of
stable vector bundles over $X_{\mathbb C}$ of rank $r'$ and degree
$d'$. Let ${\mathcal M}_{X_{\mathbb C}}(r',d')
\setminus {\mathcal M}_{X}(r',d')\, \subset\,
{\mathcal M}_{X_{\mathbb C}}(r',d')$ denote the subset
defined by all stable vector bundles which are not of the
form $F\bigotimes_{\mathbb R}{\mathbb C}$, where
$F$ is some real algebraic vector bundle over $X$.
Then the set of all isomorphism classes of stable
real algebraic vector bundles over $X$ of
rank $r$ and degree $d$ is canonically identified
with the quotient space $({\mathcal M}_{X_{\mathbb C}}(r',d')
\setminus {\mathcal M}_{X}(r',d'))/({\mathbb Z}/2{\mathbb Z})$
for the involution of ${\mathcal M}_{X_{\mathbb C}}(r',d')
\setminus {\mathcal M}_{X}(r',d')$ defined by
$W\, \longrightarrow\, \sigma^*\overline{W}$.
\end{corollary}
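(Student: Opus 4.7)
The plan is to deduce the statement directly from Proposition~\ref{hro2}, in exactly the way Corollary~\ref{cor4} was deduced from Proposition~\ref{hro}. Write ${\mathcal N}\, :=\, {\mathcal M}_{X_{\mathbb C}}(r',d')\setminus{\mathcal M}_{X}(r',d')$, and let ${\mathcal M}_X(r,d)$ denote the set of isomorphism classes of stable real algebraic vector bundles over $X$ of rank $r\, =\, 2r'$ and degree $d\, =\, 2d'$. First I would construct a map $\Phi\, :\, {\mathcal N}\, \longrightarrow\, {\mathcal M}_X(r,d)$ by sending $[E]$ to the real algebraic bundle associated to the pair $(E\oplus \sigma^*\overline{E}\, ,\tau_E)$, where the canonical swap
\[
\tau_E\, :\, E\oplus \sigma^*\overline{E}\, \longrightarrow\, \sigma^*\overline{E\oplus \sigma^*\overline{E}}\, =\, \sigma^*\overline{E}\oplus E
\]
is defined exactly as the isomorphism in eqn.~\eqref{2sl}; it clearly satisfies $(\sigma^*\overline{\tau_E})\circ \tau_E\, =\, \text{Id}$, so the pair does define a real algebraic bundle. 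Part~(1) of Proposition~\ref{hro2} guarantees that the output lies in ${\mathcal M}_X(r,d)$, and part~(2) guarantees that $\Phi$ is surjective.

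Next, I would check that the involution $W\, \longmapsto\, \sigma^*\overline{W}$ on ${\mathcal M}_{X_{\mathbb C}}(r',d')$ preserves the subset ${\mathcal N}$: if $E\, =\, F\otimes_{\mathbb R}{\mathbb C}$ for a real algebraic bundle $F$, then the data of eqn.~\eqref{tau} for $F$ supplies a canonical isomorphism $E\, \cong\, \sigma^*\overline{E}$, so $\sigma^*\overline{E}\, \in\, {\mathcal M}_X(r',d')$ as well; and since $\sigma^*\overline{\sigma^*\overline{E}}\, =\, E$, the converse is automatic. Hence ${\mathcal N}/({\mathbb Z}/2{\mathbb Z})$ is well defined. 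Part~(3) of Proposition~\ref{hro2} says exactly that $\Phi([E])\, =\, \Phi([E'])$ if and only if either $E\, \cong\, E'$ or $E\, \cong\, \sigma^*\overline{E'}$, that is, $E$ and $E'$ lie in the same ${\mathbb Z}/2{\mathbb Z}$--orbit. Consequently $\Phi$ descends to a bijection
\[
\overline{\Phi}\, :\, {\mathcal N}/({\mathbb Z}/2{\mathbb Z})\, \stackrel{\sim}{\longrightarrow}\, {\mathcal M}_X(r,d)\, ,
\]
which is the claimed canonical identification.

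I do not expect any serious obstacle: Proposition~\ref{hro2} has been engineered so that all three defining properties of the bijection (the construction, surjectivity, and the precise equivalence relation on fibres) are already built in. The only point not explicitly recorded there is the stability of ${\mathcal N}$ under $W\, \longmapsto\, \sigma^*\overline{W}$, and as indicated above this is a one--line check from the definition of ${\mathcal M}_X(r',d')$ together with the involutivity of $\sigma^*\overline{\,\cdot\,}$.
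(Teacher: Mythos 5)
Your proposal is correct and follows essentially the same route as the paper, which deduces Corollary \ref{cor5} from parts (1)--(3) of Proposition \ref{hro2} exactly as Corollary \ref{cor4} is deduced from Proposition \ref{hro} (with Lemma \ref{lem0} implicitly guaranteeing that the real bundle is determined up to isomorphism by its complexification). Your explicit check that the involution $W\,\longmapsto\,\sigma^*\overline{W}$ preserves ${\mathcal M}_{X_{\mathbb C}}(r',d')\setminus {\mathcal M}_{X}(r',d')$ is a small point the paper leaves unstated, and it is handled correctly.
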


Let ${\rm Pic}^0(X)\, \subset\, {\rm Pic}^0(X_{\mathbb C})$
be the subset consisting of all line bundles over
$X_{\mathbb C}$ of the form
$L\bigotimes_{\mathbb R}{\mathbb C}$, where $L$ is some
real algebraic line bundle over $X$.
The space $({\mathcal M}_{X_{\mathbb C}}(r',d')
\setminus {\mathcal M}_{X}(r',d'))/({\mathbb Z}/2{\mathbb 
Z})$ in Corollary \ref{cor5} is
isomorphic to the quotient space
$(({\rm Pic}^0(X_{\mathbb C})/\Gamma_r)\setminus
({\rm Pic}^0(X)/\Gamma^{\mathbb R}_r))/({\mathbb Z}/2{\mathbb
Z})$
for the involution of ${\rm Pic}^0(X_{\mathbb C})/\Gamma_r$ defined
by $L\, \longmapsto\, \sigma^*\overline{L}$, where $\Gamma_r$
is the group of line bundles $L$ over $X_{\mathbb C}$
with $L^{\otimes r}\, \cong\, {\mathcal O}_{X_{\mathbb C}}$, and
$\Gamma^{\mathbb R}_r\, :=\, {\rm Pic}^0(X)\bigcap \Gamma_r$
as in eqn. \eqref{garr}.

\begin{remark}
{\rm In view of Corollary \ref{cor3}, we conclude that
Theorem \ref{prop5}, Proposition \ref{hro} and
Proposition \ref{hro2} together classify all stable real
algebraic vector bundles over $X$. Therefore, we have
classified all polystable real
algebraic vector bundles over $X$.}
\end{remark}

\medskip
\noindent
\textsc{Acknowledgements.} We thank the referee for helpful
comments to improve the exposition.


\end{document}